\newcommand{\bitem}{\begin{itemize}}
\newcommand{\eitem}{\end{itemize}}
\newcommand{\beq}{\begin{equation}}
\newcommand{\eeq}{\end{equation}}
\newcommand{\beqn}{\begin{eqnarray*}}
\newcommand{\eeqn}{\end{eqnarray*}}
\newcommand{\cC}{{\cal C}}
\newcommand{\cE}{{\cal E}}
\newcommand{\argmin}{\mbox{argmin}}
\newcommand{\ip}[2]{\left\langle#1,#2\right\rangle}
\newcommand{\norm}[1]{\|#1\|}
\def\supp{{\text{\rm supp }}}
\def\diag{{\text{\rm diag}}}
\def\N{\mathbb{N}}
\def\Z{\mathbb{Z}}
\def\R{\mathbb{R}}
\def\RR{\mathbb{R}}
\def\C{\mathbb{C}}
\def\epsilon{\varepsilon}
\newtheorem{theorem}{Theorem}[section]
\newtheorem{lemma}[theorem]{Lemma}
\newtheorem{cor}[theorem]{Corollary}
\newtheorem{definition}[theorem]{Definition}
\newtheorem{remark}[theorem]{Remark}
\newtheorem{prop}[theorem]{Proposition}
\definecolor{gray}{gray}{.5}
\newcommand{\ms}[1]{#1}
\newcommand{\pg}[1]{#1}
\newcommand{\msch}[1]{#1}
\author{Philipp Grohs, Sandra Keiper, Gitta Kutyniok, and Martin Sch\"{a}fer}
\title{$\alpha$-Molecules}
\begin{document}

\maketitle

\begin{abstract}
Within the area of applied harmonic analysis, various multiscale systems such as wavelets, ridgelets, curvelets,
and shearlets have been introduced and successfully applied. The key property of each of those systems are their
(optimal) approximation properties in terms of the decay of the $L^2$-error of the best $N$-term approximation
for a certain class of functions. In this paper, we introduce the general framework of $\alpha$-molecules, which
encompasses most multiscale systems from applied harmonic analysis, in particular, wavelets, ridgelets, curvelets,
and shearlets as well as extensions of such with $\alpha$ being a parameter measuring the degree of anisotropy, as a means to allow a
unified treatment of approximation results within this area. Based on an $\alpha$-scaled index distance, we first
prove that two systems of $\alpha$-molecules are almost orthogonal. This leads to a general methodology to transfer
approximation results within this framework, provided that certain consistency and time-frequency localization
conditions of the involved systems of $\alpha$-molecules are satisfied. We finally utilize these results to
enable the derivation of optimal sparse approximation results \msch{for} a specific class of cartoon-like functions
by sufficient conditions on the `control' parameters of a system of $\alpha$-molecules.
\end{abstract}

\begin{center}
{\it Keywords: }{Anisotropic Scaling, Curvelets, Nonlinear Approximation, Ridgelets, Shearlets, Sparsity Equivalence, Wavelets}
\end{center}

%------------------------------------------------------
\section{Introduction}
%------------------------------------------------------

Applied Harmonic Analysis is by now one of the most thriving areas within applied mathematics. This success is
mainly due to the range of efficient multiscale systems it provides, which are today employed for a variety
of real-world applications. Just think of the first and hence `oldest' in this list, which are {\em wavelet systems}
\cite{Dau92}. In the world of imaging science, these systems are today utilized, for instance, for image restoration tasks
\cite{CDOS12} and in the world of partial differential equations, they were key to developing provably optimal
solvers for elliptic equations \cite{CDD01}, to name a few. Their crucial property is to optimally sparsely
approximate functions governed by point singularities -- in the sense of the decay rate of the $L^2$-error of the best $N$-term
approximation.

Following this grand opening, next came {\em ridgelets}, introduced by Cand\`{e}s in his PhD thesis \cite{Can98} and further developed
jointly with Donoho \cite{CD99}, which are perfectly suited for encoding ridge-like singularities appearing, for
instance, in tomography. Since it is today a general belief that images -- as well as, for instance,
solutions to transport equations -- are governed by curvilinear singularities such as edges, Cand\`{e}s and
Donoho then introduced {\em curvelets} \cite{CD04}, which were the first system to provide provably optimally sparse
approximations of a suitable model situation, thus justifiably called the second breakthrough after wavelets.
Some years later, {\em shearlets} were introduced mainly by Guo, Labate,  and one of the authors \cite{GKL05} as a
system capable of providing the same approximation behavior as curvelets, but having the advantage of providing a
unified treatment of the continuum and digital realm; nowdays used, for instance, for imaging applications
(see, e.g., \cite{EL2012}) or for solvers of transport equations \cite{DHKSW14}. And these are just a selection of multiscale
systems being developed in the area of applied harmonic analysis.

As one can see, each of those multiscale systems in $L^2(\mathbb{R}^2)$ satisfies distinct optimal sparse approximation properties for
a particular class of functions. Some of those such as curvelets and shearlets even for the same class. Besides
the aforementioned applications, such sparse approximation properties are also key to the novel area of compressed
sensing \cite{CRT06b,Don06c}, which requires a sparsifying system for the considered data. And indeed, systems
from applied harmonic analysis have already been extensively utilized for this task, see, for instance,
\cite{Donoho2010c,GK14}.

Analyzing the different constructions of such systems, one cannot fail to observe certain similarities, which
appear due to the fact that a guiding principle in applied harmonic analysis is to develop multiscale systems
based on their partition of Fourier domain as well as by utilizing certain operators (scaling-, translation-, etc.)
to generating functions. A careful observer also does not fail to notice that certain proofs such as for sparse
approximation properties of band-limited curvelets and \msch{shearlets} are quite resembling from a structural viewpoint.

Thus, one has to ask whether a general framework is acting in the background of all those results. Bringing this
to light would for the first time enable a common treatment of multiscale systems in applied harmonic analysis,
in particular, with respect to their approximation behavior, thereby enabling, for instance, transfer of known
results from one system to another or categorization of multiscale systems with respect to their sparsity behavior.
In this paper, we will introduce such a general framework which we coin {\em $\alpha$-molecules} for reasons to
be explained in the sequel.

%-----------------------------------------------------
\subsection{Towards a General Framework}
%-----------------------------------------------------

Introducing a general framework, the first step shall always be to pause and contemplate which list of desiderata we
expect this framework to satisfy. In our case \pg{the} introduced framework
shall foremost satisfy the following properties:
\bitem
\item[(D1)] Encompass most known multiscale systems within the area of applied harmonic analysis.\\[-4ex]
\item[(D2)] Allow the construction of novel multiscale systems.\\[-4ex]
\item[(D3)] Allow a categorization of systems with respect to their approximation behavior.\\[-4ex]
\item[(D4)] Enable a transfer of (sparse approximation) results between the systems within this framework.\\[-4ex]
\item[(D5)] Enable the derivation of approximation results by \pg{easy-to-verify} conditions on certain parameters associated with a system.
\eitem

Let us start by considering the two representation systems of curvelets and shearlets, which exhibit similar approximation properties
in the sense that they both exhibit an optimal decay rate of the $L^2$-error of best $N$-term approximation for the class of
so-called {\em cartoon-like functions}, \pg{which are roughly speaking} compactly supported functions that are $C^2$ apart from a
$C^2$-discontinuity curve. \pg{The common bracket in the construction of curvelets and shearlets} is {\em parabolic scaling}, i.e., a scaling matrix of the type
$\diag(s,s^{1/2})$, $s>0$ which leaves the parabola invariant. In fact, this type of scaling, which produces functions
with essential support {\em width $\approx$ length$^2$}, is specifically adapted to the fact that the model is
based on a $C^2$-discontinuity curve; a heuristic argument can be easily derived by expanding the curve parametrized
by $(E(x_2),x_2)$ with $E(0)=0 = E'(0)$ in a Taylor series in $x_2=0$ and using that $E($length of generator$) = $ width
of generator, when centering the generating function in the origin. Those considerations eventually led to the
framework of parabolic molecules \cite{Grohs2011}.

In this paper, we however aim much higher, envisioning to develop a framework which, for instance, also includes
wavelets and ridgelets. Key to our work and also the reason of the term `$\alpha$-molecule' is the observation
that a distinct property of all multiscale systems is the degree of anisotropy of their scaling operators.
\pg{Whereas} wavelet systems rely on isotropic scaling, i.e., the scaling matrix $\diag(s,s)$, ridgelets are based on the most
aniostropic scaling imaginable, which is $\diag(s,1)$. Thus, a system within the proposed framework has to
be associated with a particular {\em ($\alpha$-)scaling} of the type
\[
\begin{pmatrix} s & 0 \\ 0 & s^\alpha \end{pmatrix}, \quad s>0,
\]
the parameter $\alpha$ ranging from $\alpha=1$ (wavelets) over $\alpha = \frac12$ (curvelets and shearlets) to $\alpha = 0$
(ridgelets). The fact that, in addition, the expression `molecule' is to some extent standard in the literature of harmonic
analysis (see, for instance, \cite{FJW91}), explains the terminology {\em framework of $\alpha$-molecules}.

%-----------------------------------------------------
\subsection{The Framework of $\alpha$-Molecules}
%-----------------------------------------------------

Aiming to satisfy (D1)--(D5), the introduced systems of $\alpha$-molecules in Definition \ref{defi:alphamolecules}
comprise the following ingredients:
\begin{itemize}
\item Each system can have a different indexing set, which is then -- for the sake of a unified definition and to
enable a comparison of systems of $\alpha$-molecules -- mapped to a common parameter space.\\[-4ex]
\item $\alpha$-Scaling, translation, and rotation operators are applied to a set of generating functions which
provides maximal flexibility by allowing a different generator for each index.\\[-4ex]
\item Certain control parameters determine the time-frequency localization as well as the (almost) vanishing
moment conditions of the generating functions.
\end{itemize}
Those ingredients ensure (D1) to be fulfilled as well as (D2).

A key property of systems of $\alpha$-molecules is the almost orthogonality of each pair, made precise in
Theorem \ref{thm:almostorth}; or in other terms, the almost diagonality of the associated cross-Gramian matrix.
\ms{Using an extension of the \msch{concept} of sparsity equivalence introduced in \cite{Grohs2011}, which provides
a notion for two systems of $\alpha$-molecules to \msch{possess} a similar sparsity behavior, the almost orthogonality yields
sufficient conditions for two systems to be sparsity equivalent in Theorem \ref{theo:sparseeqivmol};
thereby deriving (D3). We note that this is no true equivalence relation, but serves our purposes for the analysis.}

\pg{Desideratum} (D4), i.e., the transfer of sparse approximation properties from one system of $\alpha$-molecules
to another, is closely related to this \ms{notion of sparsity equivalence} \pg{whose} effectiveness will exemplarily be
demonstrated by deriving \ms{a novel sparse approximation result for band-limited $\alpha$-shearlet systems, formulated in Theorem~\ref{theo:bandlimshearapprox}.
In fact, it is a corollary of the more general Theorem~\ref{theo:mainsparsity} in connection with Theorem~\ref{thm:consist}}. %\gk{Please put here the appropriate systems
%related to the changes at the end of Subsection \ref{subsec:transfer}; refer also to Theorem \ref{thm:consist}}.

The derivation of approximation results by conditions on certain parameters associated with a system of
$\alpha$-molecules, i.e., \pg{Desideratum (D5)}, can in general be derived by transferring known approximation
results from one `anchor' system to all other $\alpha$-molecules.
One possibility, which we will present in detail, is the transfer of optimal sparse approximation results of
so-called $\alpha$-curvelets \cite{GKKScurve2014} for a certain extended class of cartoon-like functions,
yielding sufficient conditions on the `control \msch{parameters}' of a system of $\alpha$-molecules to exhibit
the same optimal approximation behavior (cf. Theorem \ref{theo:mainsparsity2}).

%-----------------------------------------------------
\subsection{Expected Impact}
%-----------------------------------------------------

We anticipate our results to have the following impacts:

\begin{itemize}
\item
	\emph{Approximation Theory:} The framework of $\alpha$-molecules now provides a common platform for
studies of approximation behavior of multiscale systems within the area of applied harmonic analysis.
It is flexible enough to enable a transfer of approximation results from one system to another
and to categorize systems by their approximation behavior. It allows
a deep insight into the relation between time-frequency localization and approximation properties,
and we expect it to significantly ease the construction of multiscale systems for function classes,
arising from future technologies.
\item
	\emph{Theory of Function Spaces:} Smoothness spaces associated with a multiscale system\pg{, characterized by the decay of expansion coefficients,} are in a natural way related to the study
of approximation properties. And, in fact, a deep understanding of their structure is crucial, in particular,
for applications in numerical analysis of partial differential equations. In \cite{Grohs2011}, a first
approach to a unified theory for systems based on parabolic scaling was undertaken. We strongly expect the
framework of $\alpha$-molecules to eventually lead to a unified structural treatment of smoothness spaces
associated with all encompassed multiscale systems.
\item
	\emph{Compressed Sensing:} Compressed Sensing relies on the existence of optimal\pg{ly} sparsifying systems
for given data. Systems from applied harmonic analysis have the advantage of coming with a fast transform
and known functional analytic properties, in contrast to systems being generated by dictionary learning
algorithms. Thus, one might envision the general framework of $\alpha$-molecules to provide a wide range
of flexible multiscale systems allowing an adaption to the data at hand by learning certain control
parameters, but still preserving their advantageous functional analytic and numerical properties.
\end{itemize}

%-----------------------------------------------------
\subsection{Outline}
%-----------------------------------------------------

The paper is organized as follows. Section \ref{sec:framework} is devoted to the introduction of the framework of $\alpha$-molecules.
More precisely, based on the most prominent multiscale systems whose definitions are briefly recalled in Subsection \ref{subsec:prominent},
the notion of a system of $\alpha$-molecules is introduced in Subsection \ref{subsec:defalpha}. It is then shown in Section
\ref{sec:examples} that various versions of wavelets, curvelets, ridgelets, and shearlets (in this order) are indeed instances of
$\alpha$-molecules. The analysis of the cross-Gramian of two systems of $\alpha$-molecules showing their almost orthogonality based
on an $\alpha$-scaled index distance is presented in Section \ref{sec:gramian}. This fact is utilized in Section~\ref{sec:approx} to introduce
\ms{the notion of sparsity equivalence for systems of $\alpha$-molecules}, analyze the ability of the framework to transfer sparse approximation
results from one system to another, and at last, provide results on the optimal sparse approximation behavior of $\alpha$-molecules
with respect to a certain class of cartoon-like functions depending on their control parameters. Finally, several highly technical
and lengthy proofs are outsourced to Section \ref{sec:proofs}.

%-----------------------------------------------------
\section{A General Framework for Applied Harmonic Analysis}
\label{sec:framework}
%-----------------------------------------------------

Aiming to introduce a general framework, which encompasses most multiscale representation systems developed
within the area of applied harmonic analysis, we start by reviewing some of the most prominent systems,
namely wavelets \cite{Dau92}, ridgelets \cite{CD99}, curvelets \cite{CD04}, and shearlets \cite{GKL05}.
If the framework shall be meaningful, those systems should undoubtedly be included; serving us as intuition
and guideline for the definition of $\alpha$-molecules.

%-----------------------------------------------------
\subsection{Prominent Multiscale Representation Systems}
\label{subsec:prominent}
%-----------------------------------------------------

Historically correct, we will start with recalling the definition of wavelets. Since the notion of $\alpha$-curvelets
from \cite{GKKScurve2014} allows us to unify the notions of ridgelets and curvelets, we will \pg{then} introduce those,
followed by the definitions of (second generation) curvelets, and then ridgelets. We conclude this subsection \pg{by}
stating the definition of shearlets. Throughout, we will use the version $\widehat{\varphi}(\xi)=\mathcal{F}\varphi(\xi)
=\int_{\R} \varphi(x)e^{-2\pi ix\xi} \,dx$ for the Fourier transform of $f\in L^1(\R^d)$, and extend it in the usual
way to tempered distributions.

%-----------------------------------------------------
\subsubsection{Wavelets}
%-----------------------------------------------------

Of the various wavelet constructions for $L^2(\R^2)$, the tensor product construction (cf. \cite{Woj97}) is the most widely utilized one.
Starting with a given multi-resolution analysis of $L^2(\R)$ with scaling function $\phi^0\in L^2(\R)$ and wavelet $\phi^1\in L^2(\R)$,
the functions $\psi^e\in L^2(\R^2)$ are defined for every index $e=(e_1,e_2)\in E$, where $E=\{0,1\}^2$, as the tensor products
\begin{align}\label{eq:wavegen}
\psi^e=\phi^{e_1}\otimes\phi^{e_2}.
\end{align}
These functions serve as the generators for the wavelet system defined below.
The corresponding tiling of the frequency plane is illustrated in Figure~\ref{fig:wave}.

\begin{definition}\label{def:tensorwavelet}
Let $\phi^0$, $\phi^1\in L^2(\R)$ and $\psi^e\in L^2(\R^2)$, $e\in E$, be defined as above. Further, let $\sigma>1$, $\tau>0$ be fixed sampling
parameters. The associated {\em wavelet system} $W\big(\phi^0,\phi^1;\sigma,\tau\big)$ is then defined by
\begin{align*}
W\big(\phi^0,\phi^1;\sigma,\tau\big)&=\Big\{ \psi^{(0,0)}(\cdot - \tau k) ~:~ k\in\Z^2 \Big\} \cup \Big\{ \sigma^j\psi^e(\sigma^j\cdot - \tau k) ~:~
e\in E\backslash\{(0,0)\},\,j\in\N_0,\,k\in\Z^2 \Big\}.
\end{align*}
\end{definition}

\begin{figure}[ht]
 \centering
 \subfigure{
 \includegraphics[width = .2\textwidth]{./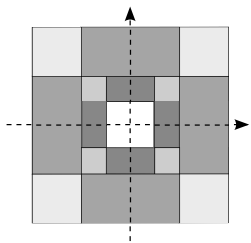}
 }
 \caption{Partition of Fourier domain induced by tensor wavelets.}
 \label{fig:wave}
 \end{figure}

%-----------------------------------------------------
\subsubsection{($\alpha$-)Curvelets}
\label{subsec:curvelets}
%-----------------------------------------------------

In 2002 Cand\`es and Donoho \cite{CD04} introduced the second generation of curvelets, nowadays simply refered to as \emph{curvelets},
\ms{the construction \pg{of which is} based on a parabolic scaling law.}
The idea to allow more general $\alpha$-scaling with $\alpha\in[0,1]$ advocated in \cite{GKKScurve2014}, yields a whole scale of
representation systems, which interpolates between wavelets for $\alpha=1$ and ridgelets for $\alpha=0$. As already discussed in the
introduction, such a general scaling-viewpoint is associated with the scaling matrix
\begin{equation} \label{eq:scalingmatrix}
A_{\alpha,s}=\begin{pmatrix} s & 0 \\ 0 & s^\alpha \end{pmatrix}, \quad s>0.
\end{equation}

We start by defining the radial and angular components separately. For the construction of the radial functions $W^{(j)}$, $j\in\N_0$, let
$\widetilde{W}^{(0)}:\R_+\rightarrow[0,1]$ and $\widetilde{W}:\R_+\rightarrow[0,1]$ be $C^\infty$-functions with the following properties:
\begin{align*}
&\supp\, \widetilde{W}^{(0)} \subset[0,2), \quad\quad \widetilde{W}^{(0)}(r)=1 \quad \mbox{for all $r \in[0,\textstyle{\frac{3}{2}}]$},  \\
&\,\:\supp\, \widetilde{W} \subset(\textstyle{\frac{1}{2}},2), \quad\quad\quad  \widetilde{W}(r)=1 \quad \:\mbox{ for all $r \in[\textstyle{\frac{3}{4}},\textstyle{\frac{3}{2}}]$}.
\end{align*}
Then, for $j\in\N$ and $r\in\R_+$, set
\[
\widetilde{W}^{(j)}(r):=\widetilde{W} (2^{-j} r).
\]
In a final step, for every $j\in\N_0$, we rescale
\[
W^{(j)}(r):=\widetilde{W}^{(j)} (8\pi r) \quad,r\in\R_+,
\]
in order to obtain an integer grid later. Notice, that $2\ge\sum_j W^{(j)}\ge 1$.

Next, we define the angular functions $V^{(j,\ell)}:\mathbb{S}^{1}\rightarrow[0,1]$,
where $\mathbb{S}^{1}\subset\R^2$ denotes the unit circle, $j\in\N$ and the index $\ell$ runs through $0,\ldots,L_j-1$ with
\[
L_j=2^{\lfloor j(1-\alpha) \rfloor}, \quad j\in\N.
\]
We start with a $C^\infty$-function $V:\R\rightarrow[0,1]$, living on $\R$ and satisfying
\[
\supp\, V\subset [-\textstyle{\frac{3}{4}}\pi,\textstyle{\frac{3}{4}}\pi] \quad \mbox{and} \quad
V(t)= 1 \quad  \text{ for all }t\in[-\textstyle{\frac{\pi}{2}},\textstyle{\frac{\pi}{2}}].
\]
For every $j\in\N$, we let $\widetilde{V}^{(j,0)}:\mathbb{S}^{1}\rightarrow[0,1]$ be the restriction of the scaled version
$V(2^{\lfloor j(1-\alpha) \rfloor} \: \cdot)$ of the function $V$ to the interval $[-\pi,\pi]$. Since $[-\pi,\pi]$ can be identified
with $\mathbb{S}^{1}$ via $\varphi:t\mapsto e^{it}$, this yields a function $\widetilde{V}^{(j,0)}$ on $\mathbb{S}^{1}$, which is $C^\infty$.

In order to obtain real-valued curvelets, we symmetrize by
\[
V^{(j,0)}(\xi):=\widetilde{V}^{(j,0)}(\xi) + \widetilde{V}^{(j,0)}(-\xi) \quad\text{ for }\xi\in\mathbb{S}^1.
\]
Then, for each scale $j\in\N$, we define the angles $\omega_j=\pi 2^{-\lfloor j(1-\alpha) \rfloor}$. We next use the notation
\begin{equation} \label{eq:rotation}
R_{\theta}=\begin{pmatrix} \cos(\theta) & - \sin(\theta) \\
 \sin(\theta) & \cos(\theta)
\end{pmatrix},
\quad \theta \in [0,2\pi],
\end{equation}
for the rotation matrix and put $R_{j,\ell}:=R_{\ell\omega_j}$. By rotating $V^{(j,0)}$, for each $\ell=0,1,\ldots,L_j-1$, we finally define
$V^{(j,\ell)}:\mathbb{S}^{1}\rightarrow[0,1]$ by
\[
V^{(j,\ell)}(\xi):=V^{(j,0)}(R_{j,\ell} \xi) \quad\text{ for }\xi\in\mathbb{S}^1.
\]

In order to secure the tightness of the frame we utilize the function
\[
\Phi(\xi):= W^{(0)}(|\xi|)^2 + \sum_{j,\ell} W^{(j)}(|\xi|)^2V^{(j,\ell)}\Big(\frac{\xi}{|\xi|}\Big)^2.
\]
Notice, that $1\le\Phi(\xi)\le8$ for all $\xi\in\R^2$. Next, we combine the radial and angular components together and define the functions
$\psi_0$ and $\psi_{j,\ell}$ on the Fourier side by
\begin{equation} \label{eq:def_curve}
\widehat{\psi}_{0}(\xi):=\frac{W^{(0)}(|\xi|)}{\sqrt{\Phi(\xi)}} \quad \mbox{and} \quad
 \widehat{\psi}_{j,\ell}(\xi)=\frac{W^{(j)}(|\xi|)V^{(j,\ell)}\Big(\frac{\xi}{|\xi|}\Big)}{\sqrt{\Phi(\xi)}}.
\end{equation}
Observe that $\widehat{\psi}_{0}$, $\widehat{\psi}_{j,\ell}\in C^\infty(\R^2)$, and that these functions are real-valued, non-negative,
compactly supported and $L^\infty$-bounded by $1$.

\begin{definition}
\label{defi:curvelets}
Let $\alpha \in [0,1]$, and let $\psi_{0}$ and $\psi_{j,\ell}$ be defined as in \eqref{eq:def_curve}. Then the associated {\em
$\alpha$-curvelet system} $C_\alpha(W^{(0)},W,V)$ is defined by
\begin{align*}
C_\alpha(W^{(0)},W,V)=\Big\{ \psi_{0,k} ~:~ k\in\Z^2 \Big\} \cup \Big\{ \psi_{j,\ell,k} ~:~ j\in\N,\,k\in\Z^2,\,\ell\in\{0,1,\ldots,L_j-1\} \Big\},
\end{align*}
where, for $j\in\N$, $\ell\in\{0,1,\ldots,L_j-1\}$, $k\in\Z^2$,
\[
\psi_{0,k}:=\psi_0(\cdot-k)  \quad \mbox{and} \quad \psi_{j,\ell,k}:=2^{-j(1+\alpha)/2} \cdot \psi_{j,\ell}(\cdot - x_{j,\ell,k} )
\;\: \mbox{with }x_{j,\ell,k}=R^{-1}_{j,\ell}A^{-1}_{\alpha,2^{j}}k.
\]
\end{definition}

It was shown in \cite{GKKScurve2014} that $C_\alpha(W^{(0)},W,V)$ constitutes a tight frame for $L^2(\R^2)$.
The induced frequency tiling for different $\alpha\in[0,1]$ is depicted in Figure~\ref{fig:alphacurve}.

\begin{figure}[ht]
 \centering
 \subfigure{
 \includegraphics[width = .2\textwidth]{./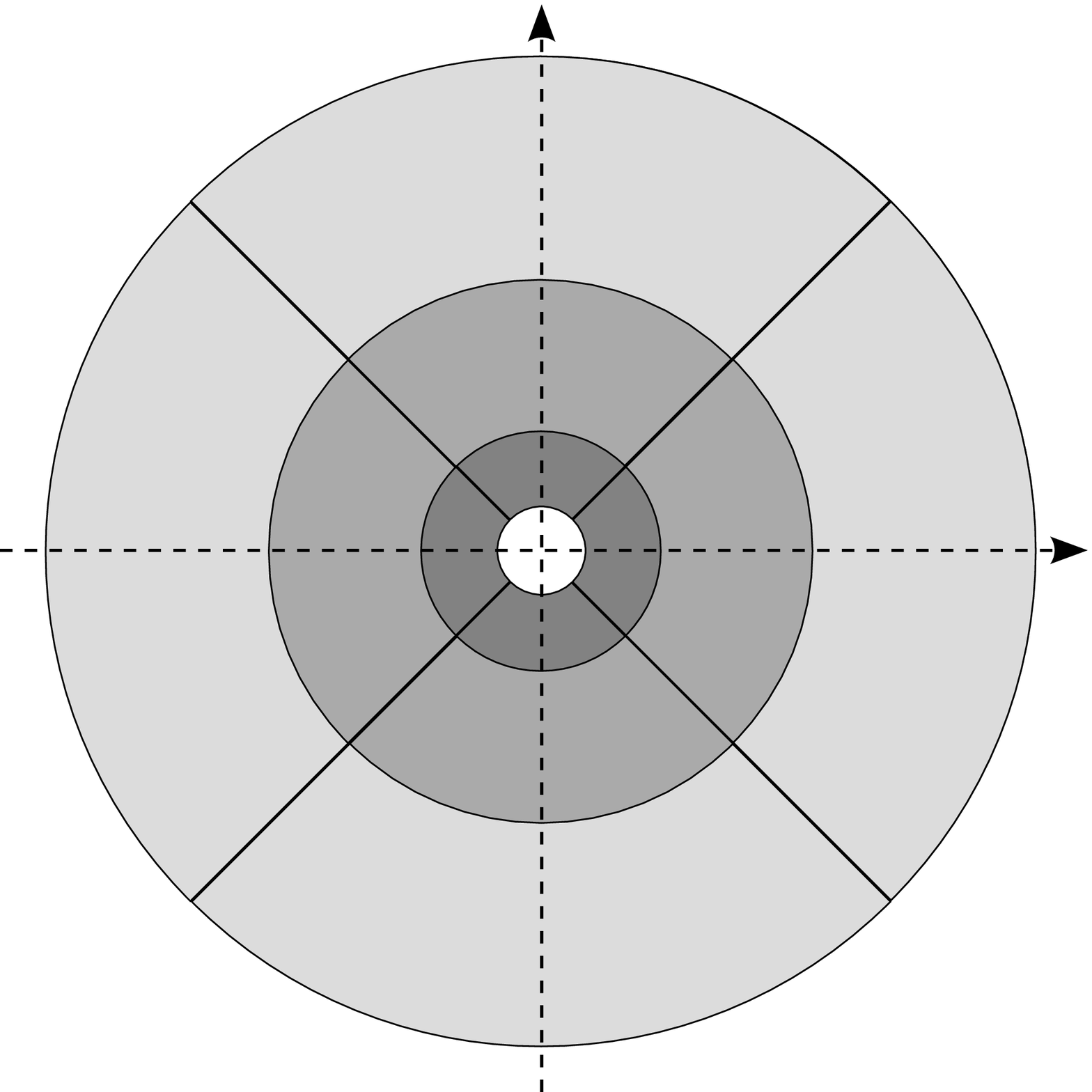}
 \put(-53,-17){(a)}
 }
 \qquad
 \subfigure{
 \includegraphics[width = .2\textwidth]{./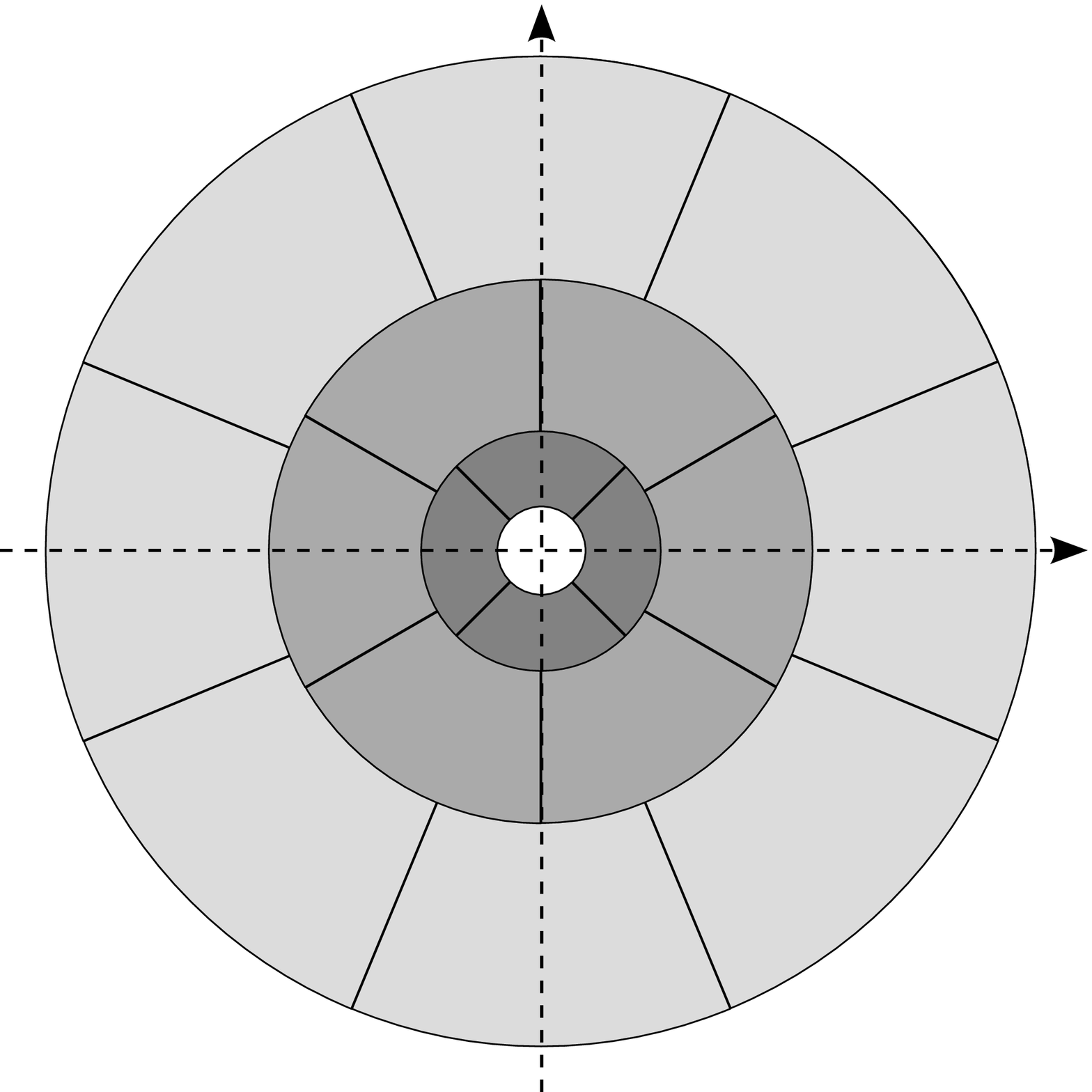}
 \put(-52,-17){(b)}
 }
 \qquad
 \subfigure{
 \includegraphics[width = .2\textwidth]{./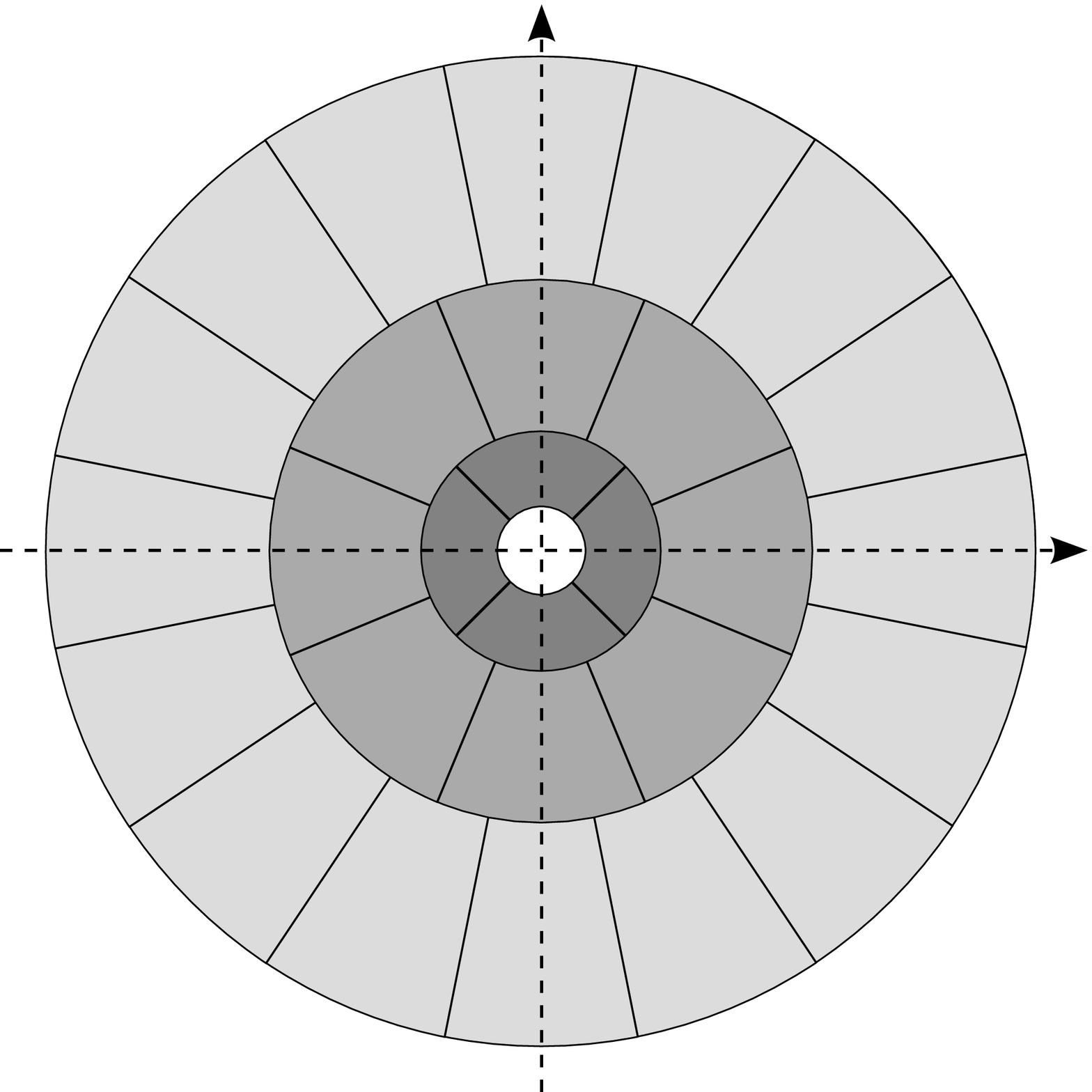}
 \put(-51,-17){(c)}
 }
 \caption{Partition of the Fourier domain induced by $\alpha$-curvelets for (a): $\alpha=1$, (b): $\alpha=1/2$, and  (c): $\alpha=0$.}
 \label{fig:alphacurve}
 \end{figure}

\begin{remark}
\ms{
The definition of $\alpha$-curvelets given in Definition \ref{defi:curvelets} is closely related to and
inspired by the classical (second generation) curvelets from \cite{CD04}.
The original system is obtained by a slight modification of the angular tiling of the construction in the case $\alpha = \frac{1}{2}$.
In contrast to $\frac{1}{2}$-curvelets, the resolution of the angular tiling is doubled at every other scale and remains fixed in between,
as depicted in Figure~\ref{fig:secgen}. In addition, the orientations of the single functions at every scale are chosen in a slightly different manner.
The reader might want to compare this to the frequency tiling of $\frac{1}{2}$-curvelets, Figure~\ref{fig:alphacurve}(b).
However, the underlying construction principle is the same.
}
\end{remark}

\begin{figure}[ht]
 \centering
 \subfigure{
 \includegraphics[width = .2\textwidth]{./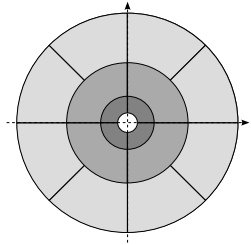}
 }
 \caption{Partition of the Fourier domain induced by second generation curvelets.}
 \label{fig:secgen}
 \end{figure}

%-----------------------------------------------------
\subsubsection{Ridgelets}
%-----------------------------------------------------

The earliest version of the ridgelet transform was introduced by Cand\`es~\cite{Can98} in 1998. It uses a univariate wavelet $\phi\in L^2(\R)$ to
map a function $f\in L^2(\R^d)$ to its transform coefficients
\[
\langle f , \sqrt{s} \phi (\langle s\nu, \cdot \rangle -t), \quad \nu\in\mathbb{S}^{d-1}, t\in\R, s\in\R_+.
\]
The function $x\mapsto  \sqrt{s} \phi (\langle s\nu, x \rangle -t)$ is a \emph{ridge function} (hence the name ridgelet) which only varies in the direction $\nu$.
Unfortunately, since this function is not in $L^2(\R^d)$, the definition, as it stands, does not make sense for every $f\in L^2(\R^d)$.
Similar to the continuous Fourier transform, however, the continuous version of this transform can be well-defined \cite{Can98}.

In order to avoid the problems associated with the lack of integrability of ridge functions, Donoho~\cite{Don2000} slightly relaxed the
definition of a ridgelet, allowing them a slow decay in the other directions. In the spirit of this more general approach, as pointed out
by Grohs \cite{GrohsRidLT}, one might define a ridgelet system as a system of functions of the form
\[
x\mapsto\sqrt{s} \psi(D_sR_\nu x-t),
\]
obtained by applying dilations $D_s = \mbox{diag}(s,1,\dots , 1)\in \R^{d\times d}$
for $s\in\R_+$ \pg{and rotations $R_\nu$, $\nu\in \mathbb{S}^{d-1}$ to some generator $\psi\in L^2(\R^d)$, which needs to be oscillatory
in one coordinate direction. The resulting system can again be shown to form a (tight) frame.} This more general definition can be stated \ms{in the case $d=2$} as follows.

\begin{definition}
\label{defi:ridgelets}
The frame $C_0(W^{(0)},W^{(1)},V)$ from Definition \ref{defi:curvelets} is called {\em ridgelet system}.
\end{definition}

The associated partition of Fourier domain is pictured in Figure~\ref{fig:alphacurve}(c).

%-----------------------------------------------------
\subsubsection{Shearlets} \label{subsec:shearlets}
%-----------------------------------------------------

Shearlets were introduced in \cite{GKL05}. The basic idea is to obtain a directional representation system from a fixed function by applying
shears, translations and parabolic dilations. The choice of shears instead of rotations for the change of orientation makes shearlets more adapted
to a digital grid than curvelets, thereby enabling faithful implementations. To allow a more uniform treatment of the different directions,
usually two generators with orthogonal orientations are used. Moreover, a distinct generator is utilized for the coarse-scale elements. Such
shearlet systems are called cone-adapted, since one can picture the Fourier plane as divided into a horizontal and a vertical cone, as well
as a coarse-scale box, associated with the respective generators. This as well as a typical Fourier domain tiling induced by a cone-adapted shearlet system can
be viewed in Figure~\ref{fig:shear}. For more information on shearlets, we refer to the survey chapter \cite{KL2012}.

%\frame{\includegraphics[width = .3\textwidth]{./pics/ConeAdapted5test.pdf}}

\begin{figure}[ht]
 \centering
 \subfigure{
 \includegraphics[width = .2\textwidth]{./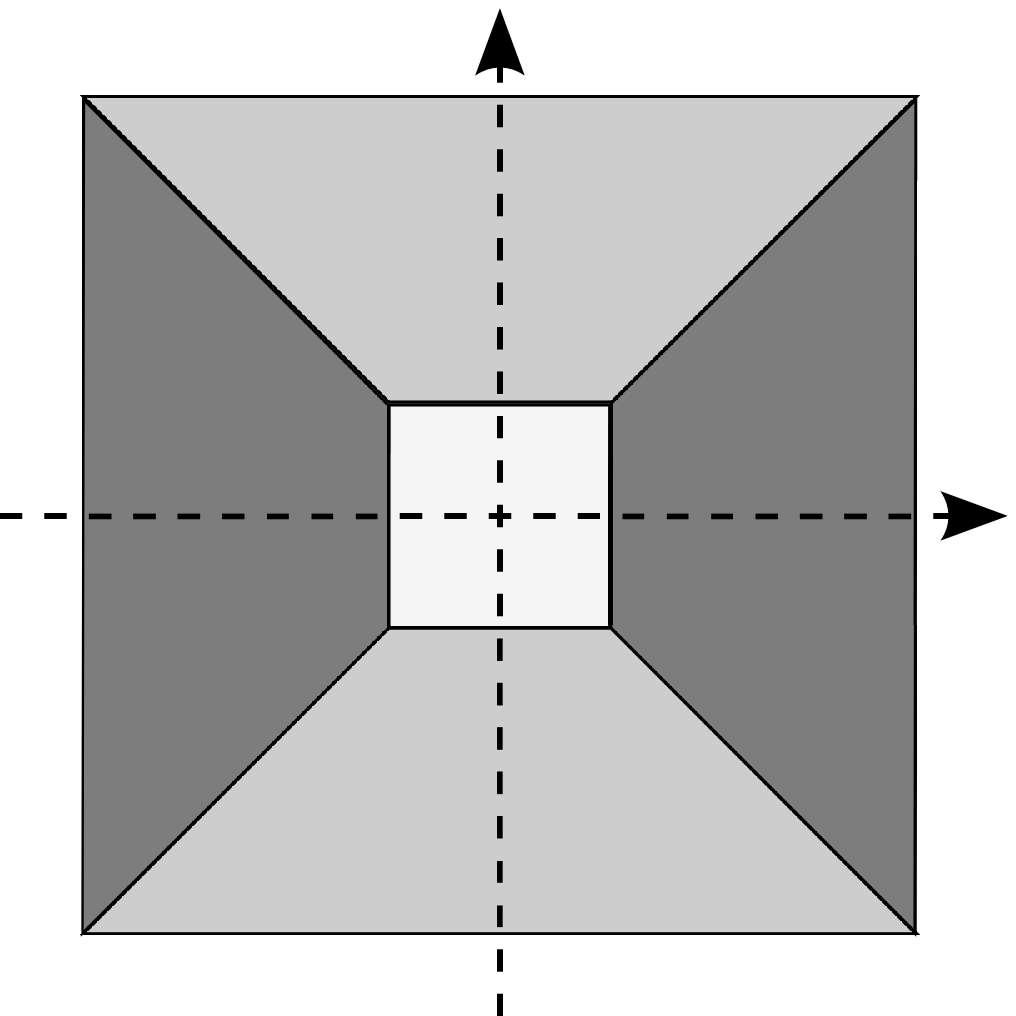}
 \put(-53,-17){(a)}
 }
 \qquad\qquad
 \subfigure{
 \includegraphics[width = .2\textwidth]{./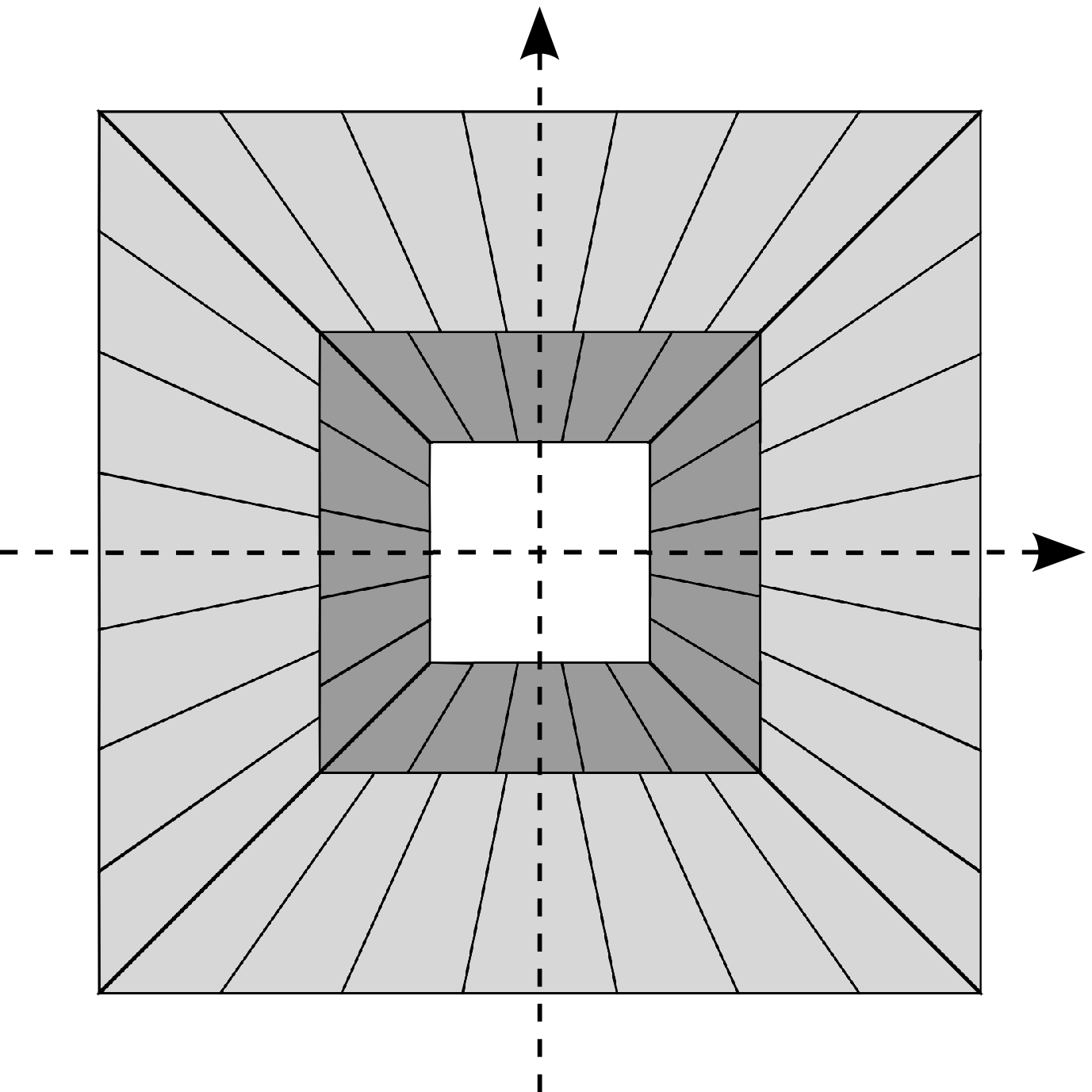}
 \put(-53,-17){(b)}
 }
 \caption{(a): The Fourier domain is partitioned into a horizontal and vertical double cone and a low-frequency box.
 (b): Partition of the Fourier domain induced by a cone-adapted shearlet system.}
 \label{fig:shear}
 \end{figure}

\ms{
For the definition of cone-adapted shearlets, we need
in addition to the scaling matrix $A_{\alpha,s}$ from \eqref{eq:scalingmatrix} its rotated version
\begin{equation}\label{eq:dilation}
\tilde{A}_{\alpha,s}=\begin{pmatrix} s^{\alpha} & 0 \\ 0 & s \end{pmatrix}, \quad s>0,
\end{equation}
as well as the shear matrix
\begin{equation}\label{eq:shearing}
S_{h}=\begin{pmatrix} 1 & h \\ 0 & 1 \end{pmatrix}, \quad h\in\R.
\end{equation} }

%For the definition we introduce the scaling matrices
% %$A_{2^j}=\diag( 2^{j\beta/2},2^{j/2})$ and $ \tilde{A}_{2^j}=\diag(2^{j/2},2^{j\beta/2})$
% \begin{align*}
% A_{2^j}=\begin{pmatrix} 2^{j\beta/2} & 0 \\ 0 & 2^{j/2}\end{pmatrix} &\quad\text{and}\quad \tilde{A}_{2^j}=\begin{pmatrix} 2^{j/2} & 0 \\ 0 & 2^{j\beta/2}\end{pmatrix}
% \end{align*}
% for $j\in\N_0$, and for $h\in\R$ the shear matrix
% \begin{align*}
% S_{h}=\begin{pmatrix} 1 & h \\ 0 & 1 \end{pmatrix}.
% \end{align*}

The cone-adapted (discrete) shearlet system is then defined as follows.

\begin{definition}\label{def:shearsys}
For $c\in\R_+$, the \emph{cone-adapted shearlet system}
$SH\big(\phi,\psi,\tilde{\psi};c\big)$ generated by $\phi,\psi,\tilde{\psi}\in L^2(\R^2)$ is defined by
\[
SH\big( \phi,\psi,\tilde{\psi};c \big)= \Phi(\phi;c) \cup \Psi(\psi;c) \cup \tilde{\Psi}(\tilde{\psi};c),
\]
where
\begin{align*}
 &\Phi(\phi;c)=\{\phi_k=\phi(\cdot - k) : k\in c\Z^2 \}, \\
 &\Psi(\psi;c)= \big\{\psi_{j,\ell,k}= 2^{3j/4}\psi(S_\ell A_{\frac12,2^{j}}\cdot-k ): j\ge0, |\ell|\le \lceil 2^{j/2} \rceil, k\in c\Z^2  \big\}, \\
 &\tilde{\Psi}(\tilde{\psi};c)
 = \big\{\tilde{\psi}_{j,\ell,k}= 2^{3j/4}\tilde{\psi}(S^T_\ell \tilde{A}_{\frac12,2^{j}}\cdot-k ): j\ge0, |\ell|\le \lceil 2^{j/2} \rceil, k\in c\Z^2  \big\}.
\end{align*}
\end{definition}

\begin{remark}
\ms{Utilizing two parameters $(c_1,c_2)\in\R^2_+$ instead of $c\in\R_+$ would allow more flexible rectangular sampling grids \cite{Kittipoom2010}.
For simplicity of notation, we chose to restrict our considerations to equal sampling in both spatial directions, i.e., a square sampling grid.
We want to remark however, that without much additional effort it is possible to also include the more general case in the discussion.
}
\end{remark}

%-----------------------------------------------------
\subsection{Definition of $\alpha$-Molecules}
\label{subsec:defalpha}
%-----------------------------------------------------

Aiming for a framework which encompasses the previously introduced multiscale systems, we first realize that their
parameter sets differ significantly. Thus a common parameter space has to be selected. Whereas wavelets only depend
on scale and position, ridgelets, curvelets as well as shearlets are all based on scale, orientation, and position.
Hence it seems appropriate to choose the common parameter space as a phase space with an additional scale parameter.

\begin{definition}
We define the \emph{parameter space} $\mathbb{P}$ by
\[
\mathbb{P}:= \mathbb{R}_+\times\mathbb{T}\times \mathbb{R}^2,
\]
where $\R_+=(0,\infty)$ and $\mathbb{T}=[-\frac{\pi}{2},\frac{\pi}{2}]$ denotes the torus with endpoints identified.
\end{definition}

Thus a point $p = (s,\theta,x)$ in the parameter space $\mathbb{P}$ describes a scale $s\in\R_+$, an orientation
$\theta\in\mathbb{T}$, and a location $x\in\R^2$.

To allow arbitrary index sets --  the necessity being discussed above -- we require mappings of those into the
just defined common parameter space $\mathbb{P}$. This leads us to the following definition.

\begin{definition}
A \emph{parametrization} consists of a pair $(\Lambda,\Phi_\Lambda)$, where $\Lambda$ is an index set and $\Phi_\Lambda$ is
a mapping
\[
\Phi_\Lambda:\left\{\begin{array}{ccc}\Lambda &\to & \mathbb{P},\\
\lambda \in\Lambda & \mapsto & \left(s_\lambda , \theta_\lambda , x_\lambda\right),
\end{array}\right.
\]
which associates with each $\lambda\in \Lambda$ a \emph{scale} $s_\lambda\in\R_+$, a \emph{direction}
$\theta_\lambda\in\mathbb{T}$, and a \emph{location} $x_\lambda\in\R^2$.
\end{definition}

Similar to all multiscale systems in applied harmonic analysis, also $\alpha$-molecules should \msch{follow the construction principle of
applying} certain operators to generating functions. ($\alpha-$)Scaling and translation operators are an obvious choice.
As an operator associated with the orientation index, two possibilities stand at attention, namely rotation and
shearing. In preference \msch{of} a more convenient choice -- recall that the shearing operator required us to utilize
two different generators in Subsection \ref{subsec:shearlets} -- and since we merely seek to introduce a
theoretical framework, we choose the rotation operator. Intriguingly, shearlets are still included in the framework
of $\alpha$-molecules as we will prove later, thereby showing its generality.

Our next decision concerns the generating functions. To ensure maximal flexibility, we allow those to change with
{\it each} index $\lambda \in \Lambda$, i.e., we employ a family of variable generators $(g^{(\lambda)})_\lambda \subseteq L^2(\R^2)$.
Certainly, to derive \msch{a} meaningful family, the generators have to satisfy certain time-frequency localization properties,
which are governed by a set of {\it control parameters}. Those are chosen as a quadruple $(L,M,N_1,N_2)$, where
$L$ describes the spatial localization, $M$ the number of directional (almost) vanishing moments, and $N_1,N_2$ the smoothness
of the generators.

After this preparation, we are now ready to face the definition of $\alpha$-molecules. For this, recall the notions
$A_{\alpha,s}$ for $\alpha$-scaling and $R_\theta$ for rotation from \eqref{eq:scalingmatrix} and \eqref{eq:rotation},
respectively. Also, we will \msch{use} the so-called analyst's brackets $\langle x \rangle = (1+x^2)^{\frac{1}{2}}$, $x\in\R$.
In this section as well as in the sequel, the notation $a\lesssim b$ shall indicate that the entities $a,b$, possibly depending on
some context dependent parameters, satisfy $a\le C\cdot b$ for a positive constant $C>0$, which is independent of the parameters.
If both $a\lesssim b$ and $b\lesssim a$, we denote this by $a\asymp b$.

\begin{definition}\label{defi:alphamolecules}
Let $\alpha \in [0,1]$, \ms{let $L,M,N_1,N_2 \in \N_0 \cup \{\infty\}$}, and let $(\Lambda,\Phi_\Lambda)$ be a parametrization.
A family $(m_\lambda)_{\lambda \in \Lambda}$ \ms{of functions $m_\lambda\in L^2(\R^2)$} is called a \emph{system of $\alpha$-molecules with respect to the parametrization
$(\Lambda,\Phi_\Lambda)$ of order $(L,M,N_1,N_2)$}, if it can be written as
\[
m_\lambda (x) = s_\lambda^{(1+\alpha)/2} g^{(\lambda)} \left(A_{\alpha,s_\lambda}R_{\theta_\lambda}\left(x - x_\lambda\right)\right)
\]
such that, for all $|\rho|\le L$,
\begin{equation}\label{eq:molcond}
\left| \partial^{\rho} \hat g^{(\lambda)}(\xi)\right| \lesssim \min\left\{1,s_\lambda^{-1} + |\xi_1| + s_\lambda^{-(1-\alpha)}|\xi_2|\right\}^M
\cdot \left\langle |\xi|\right\rangle^{-N_1} \cdot \langle \xi_2 \rangle^{-N_2}
\end{equation}
The implicit constants shall be uniform over $\lambda\in \Lambda$, and in case \pg{that} one or several control parameters equal
infinity, the respective quantity can be arbitrarily large.
\end{definition}

The condition on the generators $g^{(\lambda)}$ in \eqref{eq:molcond} ensures that the Fourier transforms of $\alpha$-molecules
have essential frequency support in a pair of opposite wedges associated to a certain orientation, and essential spatial support
in a rectangle with scale-dependent side lengths. This can \msch{perhaps be more conveniently deduced} from the corresponding version of
\eqref{eq:molcond} in polar coordinates, which can be easily computed to be
\begin{equation}\label{eq:moldecaypolar2}
    \left|\hat m_{\lambda}(\xi)\right|
    \lesssim  s_\lambda^{-(1+\alpha)/2} \cdot \min\left\{1, s_\lambda^{-1}( 1 + r )\right\}^M
    \cdot \left\langle \min \{s_\lambda^{-\alpha},s_\lambda^{-1}\} r\right \rangle^{-N_1}
    \cdot \langle s_\lambda^{-\alpha}r\sin(\varphi + \theta_\lambda)\rangle^{-N_2}.
\end{equation}

\begin{figure}[ht]
 \centering
 \subfigure{
 \includegraphics[width = .23\textwidth]{./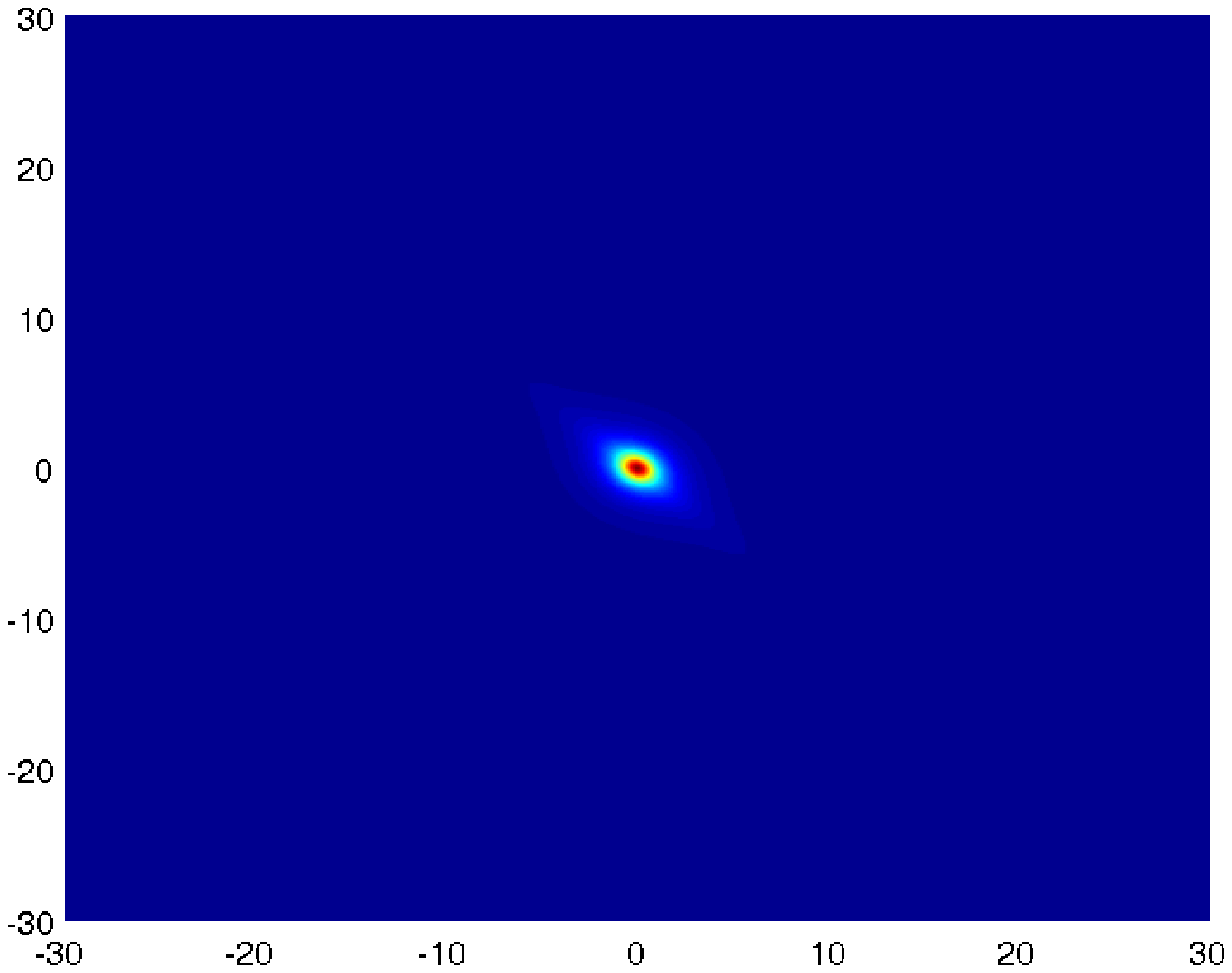}
 \put(-57,-17){(a)}
 }   % Scale 1, N1=2, N2=1, M=3, theta=\pi/4, a
 \subfigure{
 \includegraphics[width = .23\textwidth]{./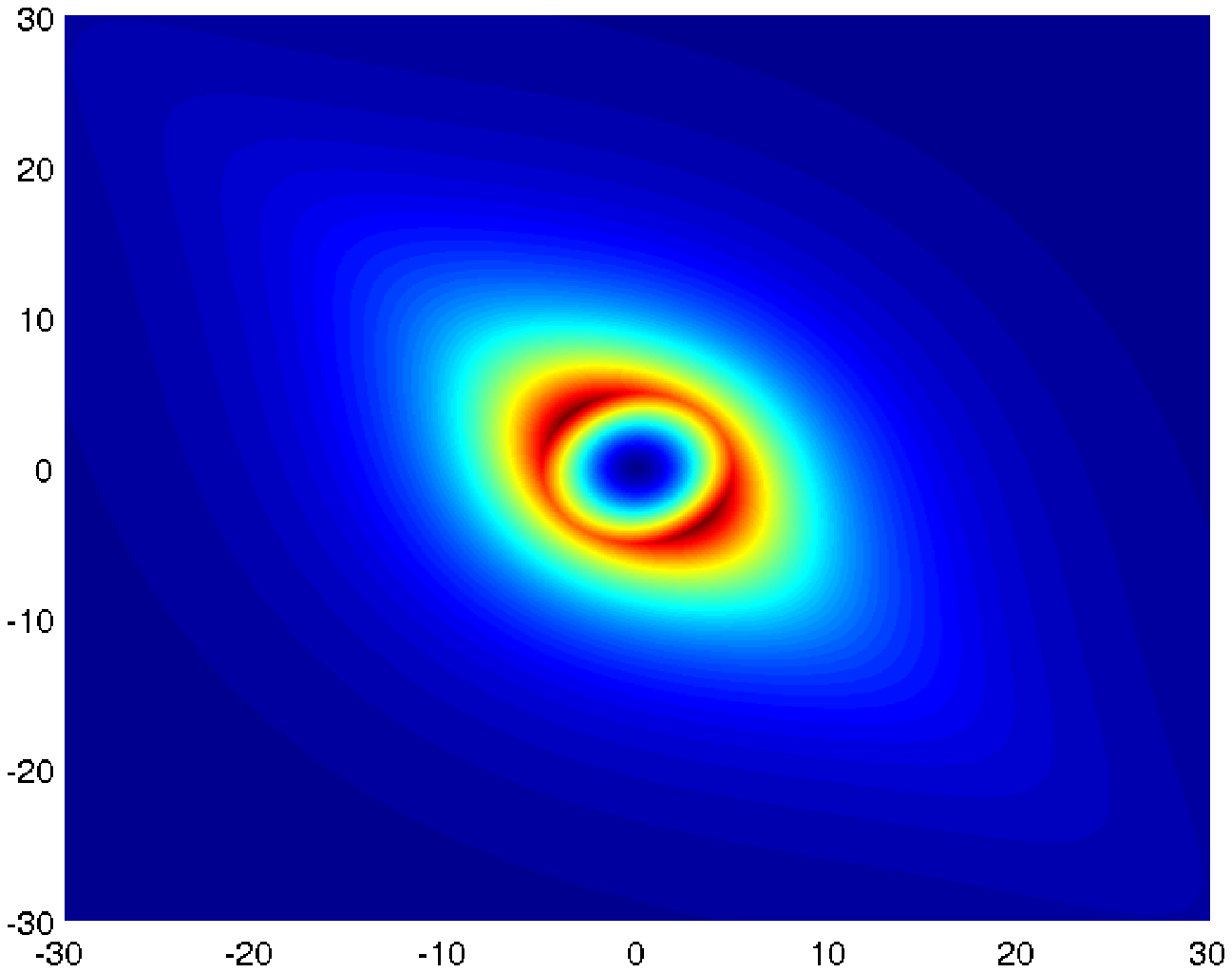}
 \put(-57,-17){(b)}
 }   % Scale 6, N1=2, N2=1, M=3, theta=\pi/4, a=1
 \subfigure{
 \includegraphics[width = .23\textwidth]{./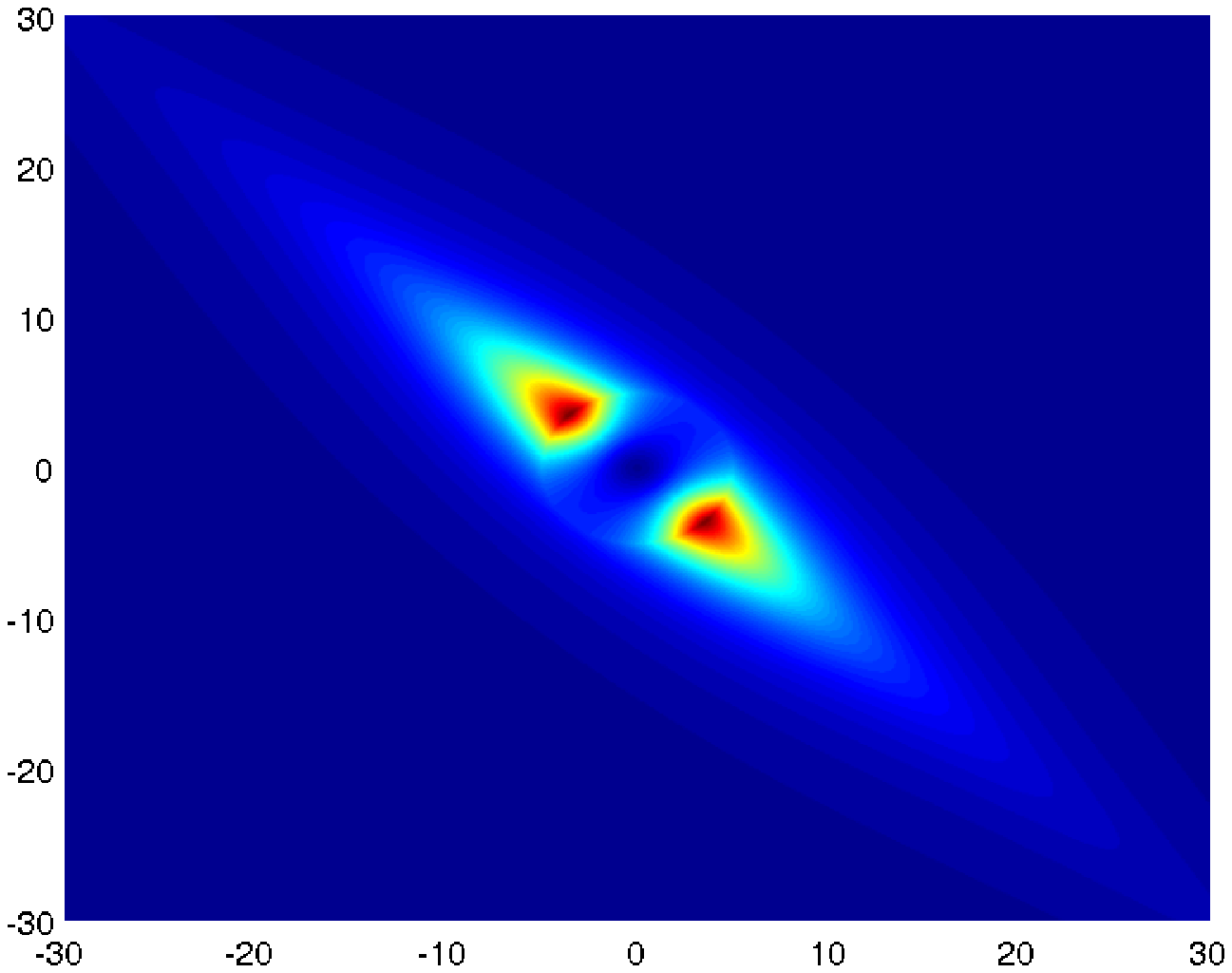}
 \put(-57,-17){(c)}
 }   % Scale 6, N1=2, N2=1, M=3, theta=\pi/4, a=1/2
 \subfigure{
 \includegraphics[width = .23\textwidth]{./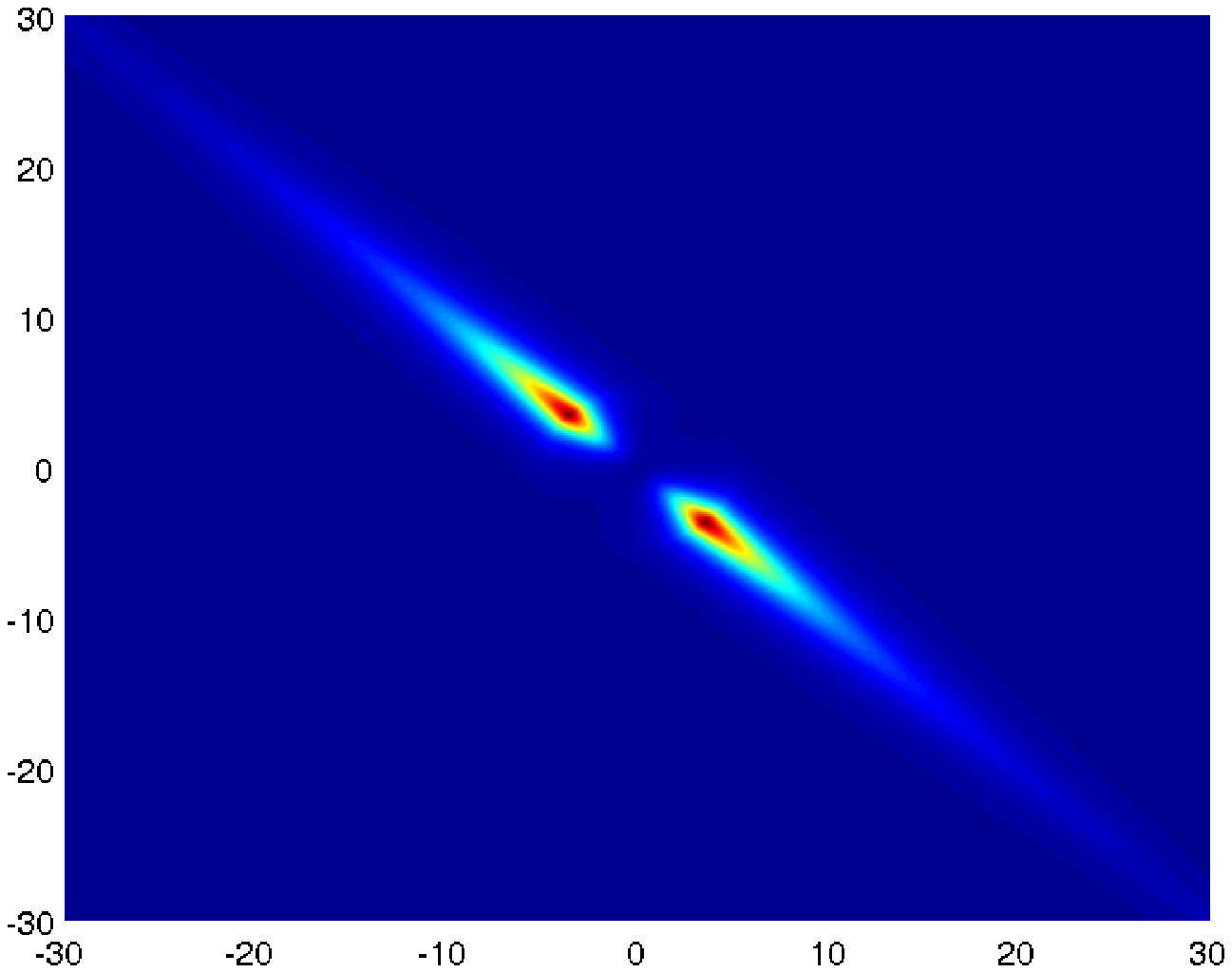}
 \put(-57,-17){(d)}
 }   % Scale 6, N1=2, N2=1, M=3, theta=\pi/4, a=0
 \caption{\ms{Frequency support of $\alpha$-molecules ($N_1=2$, $N_2=1$, $M=3$, $\theta=\pi/4$)
  with (a): $s = 1$ and independent of $\alpha$, (b): $s=6$ and $\alpha=1$, (c): $s=6$ and $\alpha=1/2$, and
  (d): $s=6$ and $\alpha=0$.}}
 \label{fig:freqsuppalphamol}
 \end{figure}

To illustrate this behavior, several possibilities for such $\alpha$-molecules are shown in Figure~\ref{fig:freqsuppalphamol}, also
demonstrating the inclusion of different \pg{anisotropies} as well as different partitions of Fourier domain. The reader might want
to compare those with the partitions given by wavelets (Figure~\ref{fig:wave} and Figure~\ref{fig:alphacurve}(a)), curvelets (Figure~\ref{fig:alphacurve} and Figure~\ref{fig:secgen}),
ridgelets (Figure~\ref{fig:alphacurve}(c)), and shearlets (Figure~\ref{fig:shear}). These figures in fact already visually indicate
that those systems as well as a variety of novel partitions of Fourier domain are included such as, for instance, the partition illustrated
in \pg{Figure \ref{fig:novel}}.

\begin{figure}[ht]
 \centering
 %\subfigure{
 %\frame{\includegraphics[width = .2\textwidth]{./pics/ConeAdapted5test.pdf}}
 %}
 \qquad\qquad
 \subfigure{
 \includegraphics[width = .2\textwidth]{./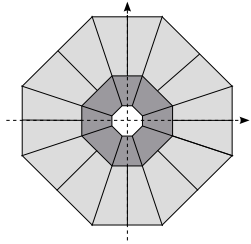}
 }
 \caption{Novel partition of the Fourier domain.}
 \label{fig:novel}
 \end{figure}

%-----------------------------------------------------
\section{Examples of $\alpha$-Molecules}
\label{sec:examples}
%-----------------------------------------------------

Having stated and discussed the novel notion of a system of $\alpha$-molecules, the immediate question arises whether the prominent
representation systems presented in Subsection \ref{subsec:prominent} are included, and if so, with respect to which parametrizations
$(\Lambda,\Phi_\Lambda)$ and of which orders $(L,M,N_1,N_2)$. For this investigation, we follow the same ordering as in Subsection \ref{subsec:prominent},
i.e., first wavelets, then curvelets, followed by ridgelets, and finally shearlets.

%-----------------------------------------------------
\subsection{Wavelets}
%-----------------------------------------------------

For this exposition, we focus on bandlimited wavelets with infinitely many vanishing moments. Therefore, we additionally assume that
the functions $\phi^0,\phi^1\in L^2(\R)$ used for the construction of $W(\phi^0,\phi^1;\sigma,\tau)$ satisfy
\begin{align}\label{eq:wavreg}
\hat{\phi}^0,\,\hat{\phi}^1\in C^L(\R) \qquad\text{ for some }L\in\N_0\cup\{\infty\},
\end{align}
and that there exist $0<a$ and $0<b<c$ such that
\begin{align}\label{eq:wavsupp}
\supp \hat{\phi}^0 \subset [-a,a]=:J^{(0)} \quad \text{and} \quad
\supp \hat{\phi}^1 \subset [-c,c]\backslash [-b,b]=:J^{(1)}.
\end{align}
These conditions are fulfilled, if, for instance, $\phi^0,\phi^1\in L^2(\R)$ are the generators of a Lemari\'{e}-Meyer wavelet
system.

The following result now shows that these wavelet systems are instances of $\alpha$-molecules of arbitrarily large order.

\begin{prop}
Let $\sigma>1$, $\tau>0$ be fixed, and assume that the functions $\phi^0$, $\phi^1$ satisfy the assumptions \eqref{eq:wavreg} and \eqref{eq:wavsupp}.
Then the wavelet system $W(\phi^0,\phi^1;\sigma,\tau)$ constitutes a system of $1$-molecules of order $(L,\infty,\infty,\infty)$ with respect
to the parametrization $(\Lambda^w,\Phi^w)$ given by
\[
\Lambda^w=\big\{ ((0,0),0,k) ~:~ k\in\Z^2 \big\} \cup \big\{ (e,j,k) ~:~ e\in E\backslash\{(0,0)\},\, j\in\N_0,\, k\in\Z^2 \big\}
\]
and
\[
\Phi^w : \Lambda^w\rightarrow\mathbb{P},\quad (e,j,k)\mapsto (\sigma^j,0,\tau \sigma^{-j}k).
\]
\end{prop}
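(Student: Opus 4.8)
The plan is to directly match each element of the wavelet system $W(\phi^0,\phi^1;\sigma,\tau)$ against the defining form of a $1$-molecule in Definition~\ref{defi:alphamolecules}, i.e.\ to exhibit for each index $\lambda = (e,j,k)$ a generator $g^{(\lambda)}$ such that $m_\lambda(x) = s_\lambda^{(1+\alpha)/2} g^{(\lambda)}(A_{\alpha,s_\lambda}R_{\theta_\lambda}(x-x_\lambda))$ with $\alpha=1$ and the prescribed decay \eqref{eq:molcond} with order $(L,\infty,\infty,\infty)$. Under the parametrization $(\Lambda^w,\Phi^w)$ we have $s_\lambda=\sigma^j$, $\theta_\lambda=0$, $x_\lambda=\tau\sigma^{-j}k$, and with $\alpha=1$ the scaling matrix is $A_{1,\sigma^j}=\sigma^j\,\mathrm{Id}$ and $R_0=\mathrm{Id}$. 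Comparing with the definition of the wavelet system, the element $\sigma^j\psi^e(\sigma^j\cdot-\tau k) = \sigma^j\psi^e(\sigma^j(\cdot-\tau\sigma^{-j}k))$ is exactly $s_\lambda^{(1+1)/2} g^{(\lambda)}(A_{1,s_\lambda}R_0(x-x_\lambda))$ with $g^{(\lambda)} := \psi^e = \phi^{e_1}\otimes\phi^{e_2}$ (and similarly $g^{(\lambda)}=\psi^{(0,0)}$ at scale $j=0$). So the only thing to verify is that the finitely many generators $\psi^e$, $e\in E$, satisfy \eqref{eq:molcond}; uniformity over $\lambda$ is automatic because there are only four distinct generators.

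The key step is therefore the estimate on $\widehat{\psi^e}(\xi) = \widehat{\phi^{e_1}}(\xi_1)\,\widehat{\phi^{e_2}}(\xi_2)$ and its derivatives up to order $L$. First, $\widehat{\psi^e}\in C^L(\R^2)$ follows from \eqref{eq:wavreg} via the product rule for the tensor product. Second, by \eqref{eq:wavsupp} the support of $\widehat{\psi^e}$ lies in $J^{(e_1)}\times J^{(e_2)}$, which is a fixed compact set; hence $|\partial^\rho\widehat{\psi^e}|$ is bounded (it is continuous with compact support) and, being compactly supported, it trivially satisfies $|\partial^\rho\widehat{\psi^e}(\xi)|\lesssim \langle|\xi|\rangle^{-N_1}\langle\xi_2\rangle^{-N_2}$ for \emph{arbitrarily large} $N_1,N_2$ — the $\langle\cdot\rangle$ factors are bounded above and below on the compact support. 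The vanishing-moment factor with $M=\infty$: for the coarse-scale generator $\psi^{(0,0)}$ the index $j=0$ gives $s_\lambda^{-1}=1$, so $\min\{1,s_\lambda^{-1}+|\xi_1|+s_\lambda^{-(1-\alpha)}|\xi_2|\}=1$ and the $M$-factor is harmless; for $e\neq(0,0)$ at least one of $e_1,e_2$ equals $1$, so $\xi$ is supported away from the corresponding axis, and since $\alpha=1$ the relevant quantity is $\min\{1,s_\lambda^{-1}+|\xi_1|\}$ — on $\mathrm{supp}\,\widehat{\psi^e}$ this is bounded below by a positive constant when $e_1=1$ (as $|\xi_1|\geq b>0$ there), so raising it to the power $M=\infty$ is again harmless. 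One must also separately check the case $e_1=0,e_2=1$, where the $M$-factor in \eqref{eq:molcond} with $\alpha=1$ reads $\min\{1,s_\lambda^{-1}+|\xi_1|\}^M$ and $\xi_1$ is \emph{not} bounded away from $0$; here one uses instead that the bound $\min\{\ldots\}\le 1$ always holds and absorbs everything into the $\langle\cdot\rangle^{-N_1}$, $\langle\cdot\rangle^{-N_2}$ decay, which compact support makes as strong as we like, together with the observation $s_\lambda^{-1}=\sigma^{-j}$ times a bounded quantity — but actually at scale $j$ one should note $|\xi_1|$ is of order $1$ only when $\widehat{\phi^0}$ is evaluated near its support boundary; in fact since $\widehat{\phi^0}$ is supported in $[-a,a]$ the factor $\min\{1,s_\lambda^{-1}+|\xi_1|\}^M$ is simply $\le 1$ and there is nothing to gain or lose.

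I expect the main (and essentially only) obstacle to be bookkeeping: carefully checking the $M$-factor in the three nontrivial cases $e\in\{(1,0),(0,1),(1,1)\}$ to confirm that the $\alpha=1$ form of the expression $s_\lambda^{-1}+|\xi_1|+s_\lambda^{-(1-\alpha)}|\xi_2| = s_\lambda^{-1}+|\xi_1|+|\xi_2|$ is, on the compact frequency support of the generator, either bounded below (so the $M=\infty$ power is fine) or bounded above by a constant (so it contributes nothing), and in either case the required estimate holds for every finite~$M$, hence for $M=\infty$ in the sense of the last sentence of Definition~\ref{defi:alphamolecules}. All of this is routine once the compact-support observation is in place; no real analytic difficulty arises because band-limitedness with infinitely many vanishing moments makes every factor in \eqref{eq:molcond} trivial to dominate, and the smoothness requirement $L$ is inherited verbatim from \eqref{eq:wavreg}.
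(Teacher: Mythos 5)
Your proposal is correct and takes essentially the same route as the paper's proof: set $g^{(\lambda)}:=\psi^e$, observe that $\sigma^j\psi^e(\sigma^j(\cdot-\tau\sigma^{-j}k))$ matches the molecule form of Definition~\ref{defi:alphamolecules} with $\alpha=1$ (so $A_{1,\sigma^j}=\sigma^j\,\mathrm{Id}$ and $R_0=\mathrm{Id}$), and deduce \eqref{eq:molcond} for arbitrary $M,N_1,N_2$ from the $C^L$-smoothness \eqref{eq:wavreg} and the compact frequency support \eqref{eq:wavsupp} of the four generators, with uniformity for free. One caution: your fallback ``bounded above by a constant, so it contributes nothing'' for the $M$-factor is backwards --- an upper bound on $\min\{1,s_\lambda^{-1}+|\xi_1|+|\xi_2|\}$ only makes the right-hand side of \eqref{eq:molcond} smaller and the estimate harder --- but this branch is never actually needed, since on $\supp\widehat{\psi}^{e}$ the expression is bounded below in every case: by $1$ for $e=(0,0)$ (where only $j=0$ occurs, so $s_\lambda^{-1}=1$), and by $b$ for $e\neq(0,0)$ (where at least one of $|\xi_1|,|\xi_2|$ lies in $J^{(1)}$, hence exceeds $b$), which is exactly what your final paragraph uses.
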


\begin{proof}
For $(e,j,k)\in\Lambda^w$ we define the functions $g^{(e,j,k)}:=\psi^e$, with $\psi^e$ being the functions given in \eqref{eq:wavegen}. Since $\hat{g}^{(e,j,k)}=\hat{\psi^e}$, we have
$\hat{g}^{(e,j,k)}\in C^L(\R^2)$ by \eqref{eq:wavreg}. Further, \eqref{eq:wavsupp} implies that
\begin{align*}
\supp \widehat{\psi}^{e} \subset S^{e}:=J^{(e_1)}\times J^{(e_2)} \quad\text{ for all }e\in E.
\end{align*}
Hence $\supp (\partial^{\rho}\hat{g}^{(e,j,k)})\subset S^e$ for every $|\rho|\le L$ and for all $(e,j,k)\in\Lambda^w$. This proves that
the functions $g^{(e,j,k)}$ satisfy condition \eqref{eq:molcond}. Since the wavelets can be written in the form
\[
\psi^e_{j,k}:=\sigma^j\psi^e(\sigma^j (x - \tau \sigma^{-j}k))=\sigma^j g^{(e,j,k)}(\sigma^j (x - \tau \sigma^{-j}k)),
\]
the proof is finished.
\end{proof}

We remark that the framework of $\alpha$-molecules can be shown to also comprise other constructions such as systems of
compactly supported wavelets or bandlimited radial wavelets.

%-----------------------------------------------------
\subsection{Curvelets}
\label{subsec:curveletsmol}
%-----------------------------------------------------

In Subsection \ref{subsec:curvelets}, we introduced $\alpha$-curvelets, which are a generalization of second generation
curvelets to different types of scalings. In \cite{CD02} the authors introduced the notion of curvelet molecules, which
are closely related to curvelets. To also include those in our consideration -- which will turn out to be beneficial
later --, we start by introducing yet a further extension, which we coin $\alpha$-curvelet molecules.

Interestingly, we can employ the general framework of $\alpha$-molecules for this, defining $\alpha$-curvelet molecules as those
systems with a particular parametrization.  Those will then be shown to encompass both $\alpha$-curvelets and curvelet
molecules, which immediately implies that those systems are in fact instances of $\alpha$-molecules.

\begin{definition}
\label{defi:alphacurveletpara}
Let $\alpha\in[0,1]$ and $\tau>0$, $\sigma>1$ be some fixed
parameters. Further, let $(\omega_j)_{j\in\N_0}$ be a sequence of positive real numbers with $\omega_j\asymp \sigma^{-j(1-\alpha)}$.
An \emph{$\alpha$-curvelet parametrization $(\Lambda^c,\Phi^c)$} is given by an index set $\Lambda^c$ of the form
\[
\Lambda^c:=\left\{ (j,\ell,k) ~:~ j\in\N_0,\ \ell\in\Z \text{ with } |\ell|\le L_j \text{ for some } L_j\in\N_0\cup\{\infty\},\ k\in\Z^2\right\},
\]
and a mapping $\Phi^c$ defined by
\[
\Phi^c : \Lambda^c \rightarrow \mathbb{P}, \quad (j,\ell,k) \mapsto (s_\lambda,\theta_\lambda,x_\lambda) := (\sigma^j,\ell\omega_j,\tau R^{-1}_{\ell\omega_j}A^{-1}_{\alpha,\sigma^j}k).
\]
A \emph{family of $\alpha$-curvelet molecules} is a family of $\alpha$-molecules with respect to an $\alpha$-curvelet parametrization.
\end{definition}

Notice that the parameters $\sigma>1$ and $\tau>0$ are sampling constants, which determine the fineness of the sampling grid,
$\sigma$ for the scale parameters and $\tau$ for the space parameters. The values $(\omega_j)_{j\in\N_0}$ prescribe the step
size of the angular sampling at each scale $j\in\N_0$.

\begin{prop}\label{prop:acurvmol}
The following statements hold.
\begin{enumerate}
\item[(i)] \ms{Curvelet molecules of regularity $R\in\N_0$}, \pg{as defined in \cite{CD02}}, are $\frac12$-curvelet molecules of order $(\infty,\infty,R/2,R/2)$.
\item[(ii)] \ms{Second generation curvelets are $\frac12$-curvelet molecules of order $(\infty,\infty,\infty,\infty)$ with parameters $\sigma=2$ and $\tau=1$.}
\item[(iii)] For each $\alpha\in[0,1]$, the $\alpha$-curvelet frame $C_\alpha(W^{(0)},W,V)$ is a system of $\alpha$-curvelet
molecules of order $(\infty,\infty,\infty,\infty)$ with parameters $\sigma=2$ and $\tau=1$.
\end{enumerate}
\end{prop}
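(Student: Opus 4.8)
The plan is to verify, for each of the three items, that the generating functions appearing in the respective constructions satisfy the decay estimate \eqref{eq:molcond} with $\alpha = \tfrac12$ (or general $\alpha$ in (iii)), after matching the indexing data with an $\alpha$-curvelet parametrization as in Definition~\ref{defi:alphacurveletpara}. The common thread is that all three systems are built from functions that, in the frequency domain, are supported in a polar ``wedge'' $\{r \asymp \sigma^j,\ |\varphi - \ell\omega_j| \lesssim \omega_j\}$ and are smooth there; one then rewrites each element in the normalized form $m_\lambda(x) = s_\lambda^{(1+\alpha)/2} g^{(\lambda)}(A_{\alpha,s_\lambda} R_{\theta_\lambda}(x - x_\lambda))$ and reads off the conditions on $g^{(\lambda)}$.

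For item (iii), which is the cleanest, I would start from the definitions \eqref{eq:def_curve}: set $\theta_\lambda = \ell\omega_j$, $s_\lambda = 2^j$, $x_\lambda = x_{j,\ell,k}$, and $g^{(\lambda)}$ defined via $\widehat{g^{(\lambda)}}(\xi) = 2^{j(1+\alpha)/2}\,\widehat{\psi_{j,\ell}}\!\left(R_{-\theta_\lambda}^{\,} A_{\alpha,2^j}^{-1}\xi\right)$, so that $m_\lambda = \psi_{j,\ell,k}$ exactly. Then $\widehat{g^{(\lambda)}}$ is $C^\infty$, supported in a fixed annular box independent of $j,\ell$ (this is the whole point of parabolic/anisotropic scaling: the wedge at scale $j$ pulls back to a fixed set), and uniformly bounded by $1$ together with all its derivatives — the latter using that $\Phi(\xi) \ge 1$, that $\widetilde W,\widetilde V$ have bounded derivatives, and that the number of derivatives hitting the rescaled arguments is compensated by the support being bounded away from the origin in the relevant variables. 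Because $\widehat{g^{(\lambda)}}$ is compactly supported and uniformly bounded in $C^k$ for every $k$, the right-hand side of \eqref{eq:molcond} with $M = N_1 = N_2 = \infty$ is trivially dominated: on a fixed compact set the factors $\langle|\xi|\rangle^{-N_1}$, $\langle\xi_2\rangle^{-N_2}$, and $\min\{1,\dots\}^M$ are all bounded below by positive constants. Item (ii) is the special case $\alpha = \tfrac12$ with the (mild) caveat that second-generation curvelets use a slightly different angular tiling (doubling every other scale, different orientations), which only affects the choice of $L_j$ and of $\omega_j$ — one checks $\omega_j \asymp 2^{-\lceil j/2\rceil} \asymp 2^{-j(1-\alpha)}$, matching the hypothesis $\omega_j \asymp \sigma^{-j(1-\alpha)}$ in Definition~\ref{defi:alphacurveletpara}, and the uniform smoothness/support bounds are proved exactly as before, invoking the properties of the generating window established in \cite{CD04}.

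Item (i) requires a little more care because curvelet molecules in the sense of \cite{CD02} are not band-limited and carry only finite smoothness $R$: there one has pointwise bounds of the form $|\partial^\beta \widehat{a_\mu}(\xi)| \lesssim (1+|\xi|)^{-R}$ after the parabolic renormalization, together with directional vanishing-moment bounds $|\widehat{a_\mu}(\xi)| \lesssim |\xi_1|^R (1+|\xi|)^{-\dots}$ near the frequency origin, uniformly in the molecule index $\mu$. The step is to translate the hypotheses of \cite{CD02} into the normalized form above, identify the curvelet-molecule index with a $\tfrac12$-curvelet parametrization (here $L_j = \lceil 2^{j/2}\rceil$ or the analogue, $\sigma,\tau$ whatever sampling the molecules use), and then match exponents: the decay $(1+|\xi|)^{-R}$ splits as $\langle|\xi|\rangle^{-R/2}\langle\xi_2\rangle^{-R/2}$ after using that on the support $|\xi_2| \lesssim |\xi|$, giving $N_1 = N_2 = R/2$; the parabolic vanishing moments give the $\min\{1,\dots\}^M$ factor with $M$ as large as the molecule definition allows (hence $M = \infty$ in the stated order, or whatever \cite{CD02} provides — one should double-check whether their moment condition is genuinely infinite-order, and if not, record the finite value), and the spatial localization $L = \infty$ comes from the smoothness of $\widehat{a_\mu}$ in $\xi$ up to order tied to $R$ — this is the one place I would be careful, since finite frequency-smoothness of the generator translates to finite $L$, so the honest statement may be order $(R', \infty, R/2, R/2)$ for an appropriate $R'$ rather than $L = \infty$; I would follow \cite{CD02} to see exactly how much $\xi$-regularity is assumed.

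The main obstacle I anticipate is bookkeeping rather than conceptual: keeping the chain of substitutions (rotation by $\theta_\lambda$, anisotropic dilation by $A_{\alpha,s_\lambda}$, translation normalization) straight so that the wedge-support and the derivative bounds transform correctly and \emph{uniformly} in $\lambda$, and in item (i) correctly converting the finite-regularity hypotheses of \cite{CD02} into the four control parameters — in particular getting the split of the isotropic decay $(1+|\xi|)^{-R}$ into the $\langle|\xi|\rangle^{-N_1}\langle\xi_2\rangle^{-N_2}$ form with the right exponents and not losing a factor of $2$ somewhere. The routine verifications (chain rule on $\widehat{g^{(\lambda)}}$, boundedness of finitely many derivatives of fixed smooth windows on compact sets) I would state but not belabor.
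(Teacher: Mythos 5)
Your overall strategy for (iii) is the same as the paper's: pull each curvelet back by $A_{\alpha,2^j}R_{\theta_\lambda}$, observe that the rescaled generator is $C^\infty$ with compact support contained in a fixed box and vanishing near the origin, and conclude that \eqref{eq:molcond} holds for every finite order. For (i) and (ii) the paper does not argue at all but simply cites \cite{Grohs2011}; your sketches go beyond what the paper records. Your worry about $L=\infty$ in (i) resolves itself once you check the definition in \cite{CD02}: the spatial localization of a curvelet molecule is required to hold with a constant $C_N$ for \emph{every} $N$, which is what gives arbitrarily many $\xi$-derivatives of $\hat g^{(\lambda)}$, i.e.\ $L=\infty$; only the frequency decay is capped by the regularity $R$, producing $N_1=N_2=R/2$.

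There is, however, one step in your argument for (iii) that is false as stated and happens to be exactly the point the paper is careful about. You claim that on a fixed compact set the factor $\min\{1,\,s_\lambda^{-1}+|\xi_1|+s_\lambda^{-(1-\alpha)}|\xi_2|\}^M$ is bounded below by a positive constant. It is not, uniformly in $\lambda$: at a point with $\xi_1=0$ and $|\xi_2|\asymp 1$ the quantity inside the minimum is $s_\lambda^{-1}+s_\lambda^{-(1-\alpha)}|\xi_2|\to 0$ as $s_\lambda\to\infty$ whenever $\alpha<1$, so for $M=\infty$ the right-hand side of \eqref{eq:molcond} degenerates there. Knowing that the support is ``annular'' (bounded away from the origin) does not repair this, since such a point can lie in an annulus. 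What you actually need, and what the construction provides, is the \emph{anisotropic} lower bound $|\xi_1|+s_\lambda^{-(1-\alpha)}|\xi_2|\gtrsim 1$ on $\supp \hat g^{(\lambda)}$: since $\hat\psi_{j,\ell}$ vanishes on a square $[-2^{j-7},2^{j-7}]^2$ around the origin (the radial window $W^{(j)}$ lives at $|\eta|\asymp 2^j$), the substitution $\eta=A_{\alpha,2^j}\xi$ shows that $\hat g^{(\lambda)}$ vanishes on the rectangle $|\xi_1|\le 2^{-7}$, $|\xi_2|\le 2^{j(1-\alpha)-7}$, and off this rectangle the minimum is $\gtrsim 2^{-7}$ uniformly in $j$. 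This is precisely the observation the paper's proof makes before concluding that \eqref{eq:molcond} holds for arbitrary $(L,M,N_1,N_2)$; with it inserted, your argument for (iii), and its adaptation to (ii), goes through.
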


\begin{proof}
(i) and (ii) \ms{were proved in \cite{Grohs2011}.}

(ii) Due to rotation invariance, it suffices to show that the generators
\[
g_{j,0,0}:=2^{-j(1+\alpha)}\psi_{j,0,0} (A^{-1}_{\alpha,2^j} \cdot), \quad j\in\N_0,
\]
satisfy \eqref{eq:molcond}. On the Fourier side they take the form
\[
\hat{g}_{j,0,0}=\hat{\psi}_{j,0,0} (A_{\alpha,2^j} \cdot).
\]
From $\supp \hat{\psi}_{0,0,0} \subset [-\frac{1}{2},\frac{1}{2}]^2=:\Xi_0$ and
\[
\supp \hat{\psi}_{j,0,0} \subset [-2^{j-1},2^{j-1}]\times[-2^{j\alpha-1},2^{j\alpha-1}], \quad j\in\N,
\]
it follows that
\[
\supp \hat{g}_{j,0,0} \subset \Xi_{0} \quad \mbox{for all } j\in\N_0.
\]
Next, for $j\in\N$ we observe that the functions $\hat{\psi}_{j,0,0}$ vanish on the squares $[-2^{j-7},2^{j-7}]^2$,
which implies that $\hat{g}_{j,0,0}$ is equal to zero on $[-2^{-7},2^{-7}]^2$ if $j\in\N$.

Clearly, we have $g\in C^\infty(\R^2)$ and the derivatives $\partial^{\rho} g$ are subject to the same support conditions
as the function $g$. Thus, condition \eqref{eq:molcond} follows for arbitrary order $(L,M,N_1,N_2)$.
\end{proof}

We obtain immediately the following corollary.

\begin{cor}\label{coro:curveletmol}
For each $\alpha\in[0,1]$, the $\alpha$-curvelet frame $C_\alpha(W^{(0)},W,V)$ is a system of $\alpha$-molecules
of order $(\infty,\infty,\infty,\infty)$ with respect to the parametrization $(\Lambda^c,\Phi^c)$.
\end{cor}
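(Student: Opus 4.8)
The plan is to deduce Corollary~\ref{coro:curveletmol} directly from part (iii) of Proposition~\ref{prop:acurvmol} by observing that being a system of $\alpha$-curvelet molecules is, by Definition~\ref{defi:alphacurveletpara}, nothing other than being a system of $\alpha$-molecules with respect to a particular parametrization — namely an $\alpha$-curvelet parametrization. So the only real content is to verify that the concrete parametrization $(\Lambda^c,\Phi^c)$ under which $C_\alpha(W^{(0)},W,V)$ was exhibited in Proposition~\ref{prop:acurvmol}(iii) is itself an instance of the abstract $\alpha$-curvelet parametrization of Definition~\ref{defi:alphacurveletpara}, with a legitimate admissible choice of the free data $\sigma$, $\tau$, $(\omega_j)_{j\in\N_0}$, and $(L_j)_{j\in\N_0}$.

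First I would fix the data: take $\sigma=2$ and $\tau=1$, as in Proposition~\ref{prop:acurvmol}(iii). For the angular step sizes set $\omega_j:=\pi 2^{-\lfloor j(1-\alpha)\rfloor}$, matching the angles $\omega_j$ used in the construction of $\alpha$-curvelets in Subsection~\ref{subsec:curvelets}; I would check that these satisfy the admissibility requirement $\omega_j\asymp\sigma^{-j(1-\alpha)}=2^{-j(1-\alpha)}$, which holds because $j(1-\alpha)-1<\lfloor j(1-\alpha)\rfloor\le j(1-\alpha)$ gives $2^{-j(1-\alpha)}\le 2^{-\lfloor j(1-\alpha)\rfloor}<2\cdot 2^{-j(1-\alpha)}$, with the implicit constants uniform in $j$. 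For the angular index ranges take $L_j:=L_j-1$ in the sense of the curvelet construction, i.e. set the bound so that $\ell$ ranges over $\{0,1,\dots,L_j-1\}$ (one may pass to a symmetric range $|\ell|\le L_j$ as in Definition~\ref{defi:alphacurveletpara} after the usual relabeling, using that $V^{(j,\ell)}$ is defined for $\ell$ modulo $L_j$); and the coarse scale $j=0$ contributes the single translate family indexed by $\ell=0$. With these choices the map $\Phi^c$ of Definition~\ref{defi:alphacurveletpara} sends $(j,\ell,k)$ to $(2^j,\ell\omega_j,R^{-1}_{\ell\omega_j}A^{-1}_{\alpha,2^j}k)$, which is exactly the assignment $s_\lambda=2^j$, $\theta_\lambda=\ell\omega_j$, $x_\lambda=x_{j,\ell,k}=R^{-1}_{j,\ell}A^{-1}_{\alpha,2^j}k$ appearing in Definition~\ref{defi:curvelets}. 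Hence $(\Lambda^c,\Phi^c)$ of Proposition~\ref{prop:acurvmol}(iii) is an $\alpha$-curvelet parametrization in the sense of Definition~\ref{defi:alphacurveletpara}.

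Then I would invoke Proposition~\ref{prop:acurvmol}(iii): $C_\alpha(W^{(0)},W,V)$ is a family of $\alpha$-curvelet molecules of order $(\infty,\infty,\infty,\infty)$, which by Definition~\ref{defi:alphacurveletpara} literally means it is a family of $\alpha$-molecules with respect to the $\alpha$-curvelet parametrization just identified, of that order. That is precisely the assertion of the corollary, so the proof closes. In writing this up I would keep it to a few sentences: ``This is an immediate consequence of Proposition~\ref{prop:acurvmol}(iii) together with Definition~\ref{defi:alphacurveletpara}, since an $\alpha$-curvelet parametrization is in particular a parametrization in the sense required for a system of $\alpha$-molecules.''

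The only thing requiring genuine care — and the step I would flag as the main (minor) obstacle — is bookkeeping around the angular index set: reconciling the one-sided range $\ell\in\{0,\dots,L_j-1\}$ from the curvelet construction with the symmetric range $|\ell|\le L_j$ in Definition~\ref{defi:alphacurveletpara}, and checking that the direction $\theta_\lambda=\ell\omega_j\in\mathbb{T}=[-\tfrac{\pi}{2},\tfrac{\pi}{2}]$ is well defined on the torus (using $L_j\omega_j=\pi$ and the symmetrization $V^{(j,0)}(\xi)+V^{(j,0)}(-\xi)$, so that angles differing by $\pi$ give the same molecule up to the reflection already built into the generator). None of this is deep; it is purely a matter of matching indexing conventions, and no new estimate beyond those already established in Proposition~\ref{prop:acurvmol} is needed.
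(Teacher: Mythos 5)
Your proposal is correct and follows exactly the paper's route: the corollary is obtained immediately from Proposition~\ref{prop:acurvmol}(iii) combined with Definition~\ref{defi:alphacurveletpara}, since a family of $\alpha$-curvelet molecules is by definition a family of $\alpha$-molecules with respect to an $\alpha$-curvelet parametrization. The extra bookkeeping you supply (checking $\omega_j=\pi 2^{-\lfloor j(1-\alpha)\rfloor}\asymp 2^{-j(1-\alpha)}$ and reconciling the one-sided versus symmetric angular index range) is sound and merely makes explicit what the paper leaves implicit.
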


%-----------------------------------------------------
\subsection{Ridgelets}
%-----------------------------------------------------

The ridgelet frame $C_0(W^{(0)},W^{(1)},V)$ (cf. Definition \ref{defi:ridgelets}) is a special case of $\alpha$-molecules
as a direct consequence of Corollary \ref{coro:curveletmol}.

\begin{prop}
The ridgelet frame $C_0(W^{(0)},W^{(1)},V)$ is a system of $\,0$-molecules of order $(\infty,\infty,\infty,\infty)$
with respect to the parametrization $(\Lambda^c,\Phi^c)$.
\end{prop}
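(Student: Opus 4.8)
The plan is to recognize that the ridgelet frame $C_0(W^{(0)},W^{(1)},V)$ is literally the $\alpha$-curvelet frame with $\alpha=0$ and with the specific radial/angular generators $W^{(0)},W^{(1)},V$ — this is exactly the content of Definition \ref{defi:ridgelets}. Consequently, Corollary \ref{coro:curveletmol} (applied with $\alpha=0$) directly says that $C_0(W^{(0)},W^{(1)},V)=C_0(W^{(0)},W,V)$ is a system of $0$-molecules of order $(\infty,\infty,\infty,\infty)$ with respect to the parametrization $(\Lambda^c,\Phi^c)$. So the proof is essentially a one-line invocation of the corollary.

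First I would note that Definition \ref{defi:ridgelets} identifies the ridgelet system $C_0(W^{(0)},W^{(1)},V)$ with the $\alpha$-curvelet system from Definition \ref{defi:curvelets} in the degenerate case $\alpha=0$; there is no notational discrepancy to reconcile since the role of $W$ in $C_\alpha(W^{(0)},W,V)$ is played here by $W^{(1)}$. Then I would simply cite Corollary \ref{coro:curveletmol}: for every $\alpha\in[0,1]$ — and in particular for $\alpha=0$ — the frame $C_\alpha(W^{(0)},W,V)$ is a system of $\alpha$-molecules of order $(\infty,\infty,\infty,\infty)$ with respect to $(\Lambda^c,\Phi^c)$. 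Specializing to $\alpha=0$ and relabelling gives the claim. One should also remark, for completeness, that in the case $\alpha=0$ the values $L_j=2^{\lfloor j(1-\alpha)\rfloor}=2^j$ and $\omega_j=\pi 2^{-\lfloor j(1-\alpha)\rfloor}=\pi 2^{-j}$ satisfy the requirements $L_j\in\N_0\cup\{\infty\}$ and $\omega_j\asymp\sigma^{-j(1-\alpha)}=1$ (constant, hence trivially $\asymp 1$) imposed in Definition \ref{defi:alphacurveletpara} with $\sigma=2$, so the $0$-curvelet parametrization $(\Lambda^c,\Phi^c)$ is indeed well-defined in this case.

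There is essentially no obstacle here; the statement is a pure corollary. The only point requiring the slightest attention is the bookkeeping check that the constants $\sigma=2,\tau=1$ and the sequences $(L_j)_j,(\omega_j)_j$ coming from the concrete ridgelet construction fall within the allowed ranges of Definition \ref{defi:alphacurveletpara} when $\alpha=0$, but as just indicated this is immediate. Hence the proof reduces to: "This is Corollary \ref{coro:curveletmol} with $\alpha=0$."

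\begin{proof}
By Definition \ref{defi:ridgelets}, the ridgelet frame $C_0(W^{(0)},W^{(1)},V)$ coincides with the $\alpha$-curvelet frame $C_\alpha(W^{(0)},W,V)$ of Definition \ref{defi:curvelets} in the case $\alpha=0$ (with the function $W^{(1)}$ playing the role of $W$). In this case one has $L_j=2^{\lfloor j(1-\alpha)\rfloor}=2^j\in\N_0$ and $\omega_j=\pi 2^{-\lfloor j(1-\alpha)\rfloor}=\pi 2^{-j}$, so that $\omega_j\asymp 1=\sigma^{-j(1-\alpha)}$ for $\sigma=2$; hence the $0$-curvelet parametrization $(\Lambda^c,\Phi^c)$ is well-defined. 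The claim is now precisely Corollary \ref{coro:curveletmol} applied with $\alpha=0$: the frame $C_0(W^{(0)},W^{(1)},V)$ is a system of $0$-molecules of order $(\infty,\infty,\infty,\infty)$ with respect to $(\Lambda^c,\Phi^c)$.
\end{proof}
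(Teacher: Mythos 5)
Your proof is correct and is exactly the paper's argument: the paper states this proposition without proof, as a direct consequence of Corollary \ref{coro:curveletmol} applied with $\alpha=0$. One small slip in your optional well-definedness check: for $\alpha=0$ and $\sigma=2$ one has $\sigma^{-j(1-\alpha)}=2^{-j}$, not $1$, so the correct observation is $\omega_j=\pi 2^{-j}\asymp 2^{-j}=\sigma^{-j(1-\alpha)}$ (the required equivalence still holds, so nothing breaks).
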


One might go even one step further, and -- based on considerations and extensions undertaken in \cite{GrohsRidLT}~--
also introduce a system of ridgelet molecules by the following definition.

\begin{definition}
A system of $0$-curvelet molecules is called a system of \emph{ridgelet molecules}.
\end{definition}

Thus, with Proposition \ref{prop:acurvmol}, also ridgelet molecules are immediately instances of $\alpha$-molecules.

%-----------------------------------------------------
\subsection{Shearlets}
\label{subsec:shearletmolecules}
%-----------------------------------------------------

Based on the definition of cone-adapted shearlet systems as stated in Definition \ref{def:shearsys}, two extensions
can be witnessed in the \pg{literature}: shearlet molecules \cite{GL08} with a subsequent generalization in \cite{Grohs2011}
as well as $\alpha$-shearlets (also called hybrid shearlets) in \cite{Kutyniok2012,Kei13}. Thus, in a similar fashion
as in the curvelet case (cf. Subsection \ref{subsec:curveletsmol}), we will introduce $\alpha$-shearlet molecules
and first prove that they are indeed instances of $\alpha$-molecules. This is significantly more difficult
than for curvelets due to the form of the parametrization which arises from the utilization of shearing instead of
rotation. This result can then be used to analyze shearlet molecules in the sense of \cite{GL08} and $\alpha$-shearlets with regard to their
membership in the framework of $\alpha$-molecules.

\ms{
For the definition of $\alpha$-shearlet molecules, it is convenient to resort to the following notation.
Recalling \eqref{eq:scalingmatrix}, \eqref{eq:dilation}, and \eqref{eq:shearing},
we put $A^0_{\alpha,s}:=A_{\alpha,s}=\mbox{diag}(s,s^\alpha)$ and $A^1_{\alpha,s}:=\tilde{A}_{\alpha,s}=\mbox{diag}(s^\alpha,s)$ for the scaling
matrices, and denote the
shearing matrices by $S^0_{\ell,j} := S_{\ell \eta_j}$ and
$S_{\ell,j}^1 := S_{\ell \eta_j}^T$.  %$S^0_{h} := S_{h}$.

\begin{definition}
\label{defi:alphashearletmol}
Let $\alpha\in[0,1]$ and $\tau>0$, $\sigma>1$ be some fixed
parameters. Further, let $(\eta_j)_{j\in\N_0}$ be a sequence of positive real numbers with $\eta_j\asymp \sigma^{-j(1-\alpha)}$
and put $\eta_{-1}=0$.
We define the index set
\begin{equation}\label{eq:shearindex}
\Lambda^s := \Lambda_0^s \cup \left\{ (\varepsilon, j,\ell,k) ~:~ \varepsilon \in \{0,1\},\
     j\in\N_0,\ \ell\in\Z \text{ with } |\ell|\le L_j, \text{ and }L_j\lesssim \sigma^{j(1-\alpha)},\, k\in\Z^2 \right\}
\end{equation}
with $\Lambda_0^s := \left\{ (0,-1,0,k) ~:~ k\in\Z^2 \right\}$ and
call a system $\Sigma:=\{\psi_\lambda: \lambda\in \Lambda^s\}$ defined by
\[
\psi_{(\varepsilon , j, \ell , k)}(\cdot) := \sigma^{(1+\alpha)j/2}\gamma^\varepsilon_{j,\ell,k} \left(A^\varepsilon_{\alpha,\sigma^j} S_{\ell,j}^\varepsilon \cdot - \tau k\right)
\quad \mbox{for some } \gamma^\varepsilon_{j,\ell,k} \in L^2(\mathbb{R}^2)
\]
a system of \emph{$\alpha$-shearlet molecules} of order $(L,M,N_1,N_2)$, if, for every $\rho\in \mathbb{N}_0^2$ with $|\rho|\le L$,
\begin{equation}\label{eq:shearcond}
|\partial^\rho \hat \gamma^\varepsilon_{j,\ell,k}(\xi_1,\xi_2)|\lesssim \min \left\{1,\sigma^{-j}+|\xi_{1+\varepsilon}|+\sigma^{-j(1-\alpha)}|\xi_{2-\varepsilon}|\right\}^M
\cdot \langle |\xi |\rangle^{-N_1} \cdot \langle \xi_{2-\varepsilon}\rangle^{-N_2}
\end{equation}
with an implicit constant independent of the indices $(\varepsilon,j,\ell,k)\in\Lambda^s$.
\end{definition}

Notice that the indices $\Lambda_0^s$ at scale $j=-1$ correspond to the coarse scale elements.

\medskip

We will next see, that although $\alpha$-shearlet molecules are based on shearing rather than rotation,
they are still instances of $\alpha$-molecules. For this, we utilize a special parametrization.

\begin{definition}
\label{defi:alphashearletpara}
%Let $\alpha\in[0,1]$ and $\tau>0$, $\sigma>1$ be some fixed
%parameters. Further, let $(\eta_j)_{j\in\N_0}$ be a sequence of positive real numbers with $\eta_j\asymp \sigma^{-j(1-\alpha)}$
%and put $\eta_{-1}=0$.
With parameters given as in Definition~\ref{defi:alphashearletmol}, an \emph{$\alpha$-shearlet parametrization $(\Lambda^s,\Phi^s)$} consists of
an index set $\Lambda^s$ of the form \eqref{eq:shearindex} together with a mapping $\Phi^s$ defined by
\[
\Phi^s : \Lambda^s \rightarrow \mathbb{P}, \quad (\varepsilon,j,\ell,k) \mapsto (s_\lambda,\theta_\lambda,x_\lambda):=
\left(\sigma^j,\varepsilon\pi/2+\arctan(-\ell \eta_j),\left(S_{\ell,j}^\varepsilon\right)^{-1} A^\varepsilon_{\alpha,\sigma^{-j}}k\right).
\]
\end{definition}

Now we are ready to state the essential result, that $\alpha$-shearlet molecules are indeed $\alpha$-molecules. Since the proof is rather long and technical,
we outsource it to Subsection \ref{subsec:proofshearletmol}. }

\begin{prop}\label{prop:shearletmol}
A system of $\alpha$-shearlet molecules of order $(L,M,N_1,N_2)$ constitutes a system of $\alpha$-molecules of
the same order with respect to the associated $\alpha$-shearlet parametrization.
\end{prop}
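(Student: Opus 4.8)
The goal is to show that a system of $\alpha$-shearlet molecules $\Sigma = \{\psi_\lambda : \lambda \in \Lambda^s\}$, with generators $\gamma^\varepsilon_{j,\ell,k}$ satisfying \eqref{eq:shearcond}, can be rewritten in the canonical $\alpha$-molecule form of Definition~\ref{defi:alphamolecules} with respect to the $\alpha$-shearlet parametrization $(\Lambda^s, \Phi^s)$, at the same order $(L,M,N_1,N_2)$. The point of departure is the elementary geometric fact that a shear matrix is close to a rotation: for $|\ell| \le L_j \lesssim \sigma^{j(1-\alpha)}$ and $\eta_j \asymp \sigma^{-j(1-\alpha)}$, the shear amount $\ell\eta_j$ is \emph{bounded}, so the angle $\theta_\lambda = \varepsilon\pi/2 + \arctan(-\ell\eta_j)$ lies in a compact subinterval of $\mathbb{T}$, and the rotation $R_{\theta_\lambda}$ and the shear $S^\varepsilon_{\ell,j}$ differ only by a bounded, invertible linear map after an appropriate diagonal rescaling. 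Concretely, I would first define, for each $\lambda = (\varepsilon,j,\ell,k) \in \Lambda^s$, the prospective $\alpha$-molecule generator
\[
g^{(\lambda)} := \sigma^{(1+\alpha)j/2}\, \tau^{-\text{(suitable)}} \cdot \big( \text{transported }\gamma^\varepsilon_{j,\ell,k} \big),
\]
chosen so that $\psi_\lambda(x) = s_\lambda^{(1+\alpha)/2} g^{(\lambda)}(A_{\alpha,s_\lambda} R_{\theta_\lambda}(x - x_\lambda))$ holds \emph{identically}; this forces $g^{(\lambda)}$ to be $\gamma^\varepsilon_{j,\ell,k}$ precomposed with the linear map $T_\lambda := A^\varepsilon_{\alpha,\sigma^j} S^\varepsilon_{\ell,j} R_{\theta_\lambda}^{-1} A_{\alpha,\sigma^j}^{-1}$ (modulo the $\tau$-translation bookkeeping, which only shifts the argument and does not affect the decay estimate).

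The heart of the matter is then to show that $T_\lambda$ is a linear isomorphism whose operator norm and the norm of whose inverse are bounded uniformly in $\lambda$, and — crucially — that $T_\lambda$ \emph{respects the anisotropic geometry encoded in \eqref{eq:molcond}}. That is, after transporting the frequency-side estimate \eqref{eq:shearcond} for $\hat\gamma^\varepsilon_{j,\ell,k}$ through the change of variables $\xi \mapsto T_\lambda^{-T}\xi$, one must recover \eqref{eq:molcond} for $\hat g^{(\lambda)}$. For the $\langle |\xi|\rangle^{-N_1}$ factor this is immediate, since $|T_\lambda^{-T}\xi| \asymp |\xi|$ from the two-sided norm bound. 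For the vanishing-moment factor $\min\{1, s_\lambda^{-1} + |\xi_1| + s_\lambda^{-(1-\alpha)}|\xi_2|\}^M$ and the directional-smoothness factor $\langle\xi_2\rangle^{-N_2}$, I would argue that $T_\lambda$ maps the relevant ``wedge'' coordinate directions to equivalent ones: the key computation is that the first row of $A^\varepsilon_{\alpha,\sigma^j} S^\varepsilon_{\ell,j} R_{\theta_\lambda}^{-1} A_{\alpha,\sigma^j}^{-1}$ behaves like a bounded multiple of $e_1$ (resp. $e_2$ for $\varepsilon=1$) plus a $\sigma^{-j(1-\alpha)}$-small contribution in the other coordinate, which is exactly the tolerance allowed by the $\sigma^{-j(1-\alpha)}|\xi_2|$ term. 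This step — verifying that the shear-vs-rotation discrepancy is absorbed into the anisotropic weights at every scale $j$ — is where the parabolic-type scaling is used and is the main obstacle; it is essentially a careful entry-by-entry asymptotic analysis of the $2\times 2$ matrix $T_\lambda$, keeping track of which entries are $O(1)$ and which are $O(\sigma^{-j(1-\alpha)})$.

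Finally, for the differentiated estimates $|\partial^\rho \hat g^{(\lambda)}|$ with $|\rho| \le L$, I would invoke the chain rule: $\partial^\rho(\hat\gamma^\varepsilon_{j,\ell,k} \circ T_\lambda^{-T})$ is a finite linear combination of terms $(\partial^\sigma \hat\gamma^\varepsilon_{j,\ell,k})\circ T_\lambda^{-T}$ with $|\sigma| \le |\rho|$ and coefficients that are products of entries of $T_\lambda^{-T}$, hence uniformly bounded; each summand then satisfies the desired bound by the estimate already established for the undifferentiated case (with $\sigma$ in place of $\rho$), so \eqref{eq:molcond} holds for all $|\rho|\le L$ with implicit constants uniform over $\lambda$. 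The coarse-scale indices in $\Lambda_0^s$ (scale $j=-1$, i.e. $s_\lambda = \sigma^{-1} \asymp 1$) are handled separately and trivially, since there $T_\lambda$ is a fixed matrix and \eqref{eq:shearcond} already gives \eqref{eq:molcond} up to a constant. Assembling these pieces yields that $\Sigma$ is a system of $\alpha$-molecules of order $(L,M,N_1,N_2)$ with respect to $(\Lambda^s,\Phi^s)$, proving the proposition.
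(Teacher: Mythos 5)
Your proposal is correct and follows essentially the same route as the paper's proof: define $g^{(\lambda)}$ via the transfer matrix $A^\varepsilon_{\alpha,\sigma^j}S^\varepsilon_{\ell,j}R_{\theta_\lambda}^{-1}A_{\alpha,\sigma^j}^{-1}$, use the boundedness of $\ell\eta_j$ to confine $\theta_\lambda$ to a compact angular range, observe that the conjugated matrix on the Fourier side is upper triangular with $O(1)$ diagonal entries and an $O(\sigma^{-j(1-\alpha)})$ off-diagonal entry (so the moment, $\langle|\xi|\rangle^{-N_1}$, and $\langle\xi_2\rangle^{-N_2}$ factors all transport), and handle derivatives up to order $L$ by the chain rule with uniformly bounded coefficients. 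This is exactly the argument given in Subsection~\ref{subsec:proofshearletmol}.
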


We now return to the question of whether shearlet molecules in the sense of \cite{GL08} and $\alpha$-shearlets are instances of $\alpha$-molecules.
For this, we first recall the definition of $\alpha$-shearlets, which can be regarded as a version of Definition
\ref{def:shearsys} with flexible scaling, thereby providing like $\alpha$-curvelets a parametrized family of systems ranging from wavelets
to ridgelets. To not confuse this parameter with the parameter $\alpha$ from $\alpha$-molecules, we rename it $\beta$.

\begin{definition}\label{def:betashearsystem}
For $c\in\R_+$ and $\beta\in(1,\infty)$, the \emph{cone-adapted $\beta$-shearlet system} $SH\big(\phi,\psi,\tilde{\psi};c,\beta\big)$
generated by $\phi,\psi,\tilde{\psi}\in L^2(\R^2)$ is defined by
\[
SH\big( \phi,\psi,\tilde{\psi};c,\beta \big)= \Phi(\phi;c,\beta) \cup \Psi(\psi;c,\beta) \cup \tilde{\Psi}(\tilde{\psi};c,\beta),
\]
where
\begin{align*}
&\Phi(\phi;c,\beta)=\{\phi_k=\phi(\cdot - k) : k\in c\Z^2 \}, \\
&\Psi(\psi;c,\beta)= \big\{\psi_{j,\ell,k}= 2^{j(\beta+1)/4}\psi(S_\ell A_{\beta^{-1},2^{j\beta/2}}\cdot-k ): j\ge0, |\ell|\le \lceil 2^{j(\beta-1)/2} \rceil, k\in c\Z^2  \big\}, \\
&\tilde{\Psi}(\tilde{\psi};c,\beta)
 = \big\{\tilde{\psi}_{j,\ell,k}= 2^{j(\beta+1)/4}\tilde{\psi}(S^T_\ell \tilde{A}_{\beta^{-1},2^{j\beta/2}}\cdot-k ): j\ge0, |\ell|\le \lceil 2^{j(\beta-1)/2} \rceil, k\in c\Z^2  \big\}.
\end{align*}
\end{definition}

\ms{
The following result shows that shearlet molecules as well as cone-adapted $\beta$-shearlet systems -- with either
band-limited or compactly supported generators -- are instances of $\alpha$-molecules.

In the band-limited case, we require the generators $\phi,\psi,\tilde{\psi} \in L^2(\R^2)$ to have support of the form
\[
\supp \phi\subset Q, \qquad \sup \psi\subset W, \qquad \sup \tilde{\psi}\subset \tilde{W},
\]
where $Q\subset\R^2$ is a cube centered at the origin and $W,\,\tilde{W}\subset\R^2$ satisfy
\[
W\subset [-a,a]\times ([-c,-b]\cup[b,c]),\qquad \tilde{W}\subset ([-c,-b]\cup[b,c])\times[-a,a]
\]
for some $0<b<c$ and $0<a$.

In the compact case, the coarse-scale generator $\phi$ shall satisfy
 \[
 \phi\in C_0^{N_1+N_2}(\R^2).
 \]
 Furthermore, we assume the separability of $\psi\in L^2(\R^2)$, i.e.
 $
 \psi(x_1,x_2)=\psi_1(x_1)\psi_2(x_2),
 $
 and let $\tilde{\psi}$ be its rotation by $\pi/2$. Finally, the functions $\psi_1,\,\psi_2$ shall satisfy
 \[
 \psi_1\in C_0^{N_1}(\R) \quad \mbox{and} \quad  \psi_2\in C_0^{N_1+N_2}(\R),
 \]
 and for $\psi_1$ we assume $M\in\N_0$ vanishing moments.

 We wish to emphasize that there is a distinct difference
 between band-limited and compactly supported generators, as can also be read below from the different orders of the $\alpha$-molecules they induce.
}

\begin{prop}\label{prop:ashearmol}
The following statements hold.
\begin{enumerate}
\item[(i)] \ms{Shearlet molecules of regularity $R\in\N_0$,
\pg{as defined in \cite{GL08}}, are $\frac12$-molecules of order $(\infty,\infty,R/2,R/2)$.}
\item[(ii)] \ms{For each $\beta\in(1,\infty)$, $c\in\R_+$, and band-limited generators $\phi,\psi$, and $\tilde{\psi}$ subject to the conditions above, the
cone-adapted $\beta$-shearlet system $SH\big(\phi,\psi,\tilde{\psi};c,\beta\big)$ is a system of $\beta^{-1}$-molecules
of order $(\infty,\infty,\infty,\infty)$ with respect to the parametrization $(\Lambda^s,\Phi^s)$
with $\tau=c$, $\sigma=2^{\beta/2}$, $\eta_j=\sigma^{-j(1-\alpha)}$ and $L_j=\lceil \sigma^{j(1-\alpha)} \rceil$.}
\item[(iii)] \ms{For each $\beta\in(1,\infty)$, $c\in\R_+$, and compactly supported generators $\phi,\psi$, and $\tilde{\psi}$ subject to the conditions above, the
cone-adapted $\beta$-shearlet system $SH\big(\phi,\psi,\tilde{\psi};c,\beta\big)$ is a system of $\beta^{-1}$-molecules
of order $(L,M-L,N_1,N_2)$, where $L\in\{0,\ldots,M\}$ arbitrary, with respect to the parametrization
$(\Lambda^s,\Phi^s)$ with $\tau=c$, $\sigma=2^{\beta/2}$, $\eta_j=\sigma^{-j(1-\alpha)}$ and $L_j=\lceil \sigma^{j(1-\alpha)} \rceil$.}
\end{enumerate}
\end{prop}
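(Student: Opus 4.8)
\emph{Proof proposal.} All three statements will be reduced to Proposition~\ref{prop:shearletmol}: it suffices to exhibit each system as a system of $\alpha$-shearlet molecules in the sense of Definition~\ref{defi:alphashearletmol} --- with $\alpha=\frac12$ in (i) and $\alpha=\beta^{-1}$ in (ii) and (iii) --- of the asserted order, since Proposition~\ref{prop:shearletmol} then upgrades it to a system of $\alpha$-molecules of the same order with respect to the associated $\alpha$-shearlet parametrization. For each part the plan is thus: (a) rewrite the system in the template
\[
\psi_{(\varepsilon,j,\ell,k)}(\cdot)=\sigma^{(1+\alpha)j/2}\,\gamma^\varepsilon_{j,\ell,k}\bigl(A^\varepsilon_{\alpha,\sigma^j}S^\varepsilon_{\ell,j}\cdot-\tau k\bigr),
\]
reading off $\tau,\sigma,(\eta_j),(L_j)$; (b) verify the generator estimate \eqref{eq:shearcond} with the claimed control parameters; (c) apply Proposition~\ref{prop:shearletmol}. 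For (ii) and (iii) step (a) is pure bookkeeping, resting on the elementary matrix identities
\[
A_{\alpha,\sigma^j}\,S_{\ell\eta_j}=S_\ell\,A_{\beta^{-1},\sigma^j}\qquad\text{and}\qquad\tilde A_{\alpha,\sigma^j}\,S^T_{\ell\eta_j}=S^T_\ell\,\tilde A_{\beta^{-1},\sigma^j},
\]
which hold precisely when $\alpha=\beta^{-1}$ and $\eta_j=\sigma^{-j(1-\alpha)}$: with $\sigma=2^{\beta/2}$ one has $2^{j\beta/2}=\sigma^j$ and $2^{j(\beta+1)/4}=\sigma^{j(1+\alpha)/2}$ (so the $L^2$-normalisation matches), the shear range $|\ell|\le\lceil 2^{j(\beta-1)/2}\rceil=L_j$ matches the index set \eqref{eq:shearindex}, and $\tau=c$ matches the translation lattice $c\Z^2$. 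Taking the fixed generators $\gamma^0:=\psi$ and $\gamma^1:=\tilde\psi$ on the fine scales $j\ge0$ and $\gamma^0:=\phi$ on the coarse scale $j=-1$, the system $SH(\phi,\psi,\tilde\psi;c,\beta)$ then literally has the form above.

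In the band-limited case (ii), step (b) is immediate. The generators being $C^\infty$ with compact Fourier support, all their derivatives are bounded and the decay factors $\langle|\xi|\rangle^{-N_1}$ and $\langle\xi_{2-\varepsilon}\rangle^{-N_2}$ hold on that support for every $N_1,N_2$; and since the Fourier support of each fine-scale generator is a pair of boxes staying away from the coordinate axis governing its cone, the factor $\min\{1,\sigma^{-j}+|\xi_{1+\varepsilon}|+\sigma^{-j(1-\alpha)}|\xi_{2-\varepsilon}|\}^M$ is bounded below by a positive constant there, so \eqref{eq:shearcond} holds for every $M$ as well. At $j=-1$ the minimum equals $1$ since $\sigma>1$, which takes care of $\phi$. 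Hence the system is of order $(\infty,\infty,\infty,\infty)$, and Proposition~\ref{prop:shearletmol} gives (ii).

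The compactly supported case (iii) is the main point; here step (b) needs a careful trade-off between derivatives and vanishing moments. Writing $\hat\psi=\hat\psi_1\otimes\hat\psi_2$, the $M$ vanishing moments of $\psi_1$ together with $\psi_1\in C_0^{N_1}$ give $|\hat\psi_1(\xi_1)|\lesssim\min\{1,|\xi_1|\}^M\langle\xi_1\rangle^{-N_1}$, and $\psi_2\in C_0^{N_1+N_2}$ gives $|\hat\psi_2(\xi_2)|\lesssim\langle\xi_2\rangle^{-(N_1+N_2)}$. Differentiating $\hat\psi$ up to order $|\rho|\le L$ amounts to Fourier-transforming $x^\rho\psi$; multiplication by $x_1^{\rho_1}$ consumes at most $L$ of the vanishing moments --- leaving $M-L$ --- while preserving the smoothness, so that
\[
|\partial^\rho\hat\psi(\xi)|\lesssim\min\{1,|\xi_1|\}^{M-L}\,\langle\xi_1\rangle^{-N_1}\,\langle\xi_2\rangle^{-(N_1+N_2)}.
\]
Bounding $\min\{1,|\xi_1|\}\le\min\{1,\sigma^{-j}+|\xi_1|+\sigma^{-j(1-\alpha)}|\xi_2|\}$ and using $\langle\xi_1\rangle\langle\xi_2\rangle\ge\langle|\xi|\rangle$ turns this into \eqref{eq:shearcond} for $\varepsilon=0$ of order $(L,M-L,N_1,N_2)$; the rotated generator $\tilde\psi$ is treated symmetrically (with $\varepsilon=1$ and $\xi_1,\xi_2$ interchanged), and for $\phi\in C_0^{N_1+N_2}$ one has $|\partial^\rho\hat\phi(\xi)|\lesssim\langle|\xi|\rangle^{-(N_1+N_2)}\le\langle|\xi|\rangle^{-N_1}\langle\xi_2\rangle^{-N_2}$, which combined with the minimum being $1$ at $j=-1$ settles the coarse scale. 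Proposition~\ref{prop:shearletmol} then yields (iii).

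Part (i) follows the same scheme: a system of shearlet molecules of regularity $R$ as in \cite{GL08} is, after renaming the index set, already in the $\frac12$-shearlet-molecule form (parabolic scaling, integer shears $S_\ell$, and a variable generator with $\frac12$-adapted space-side smoothness, decay, and vanishing moments). Translating the space-side conditions of \cite{GL08} into the Fourier-side estimate \eqref{eq:shearcond} --- the passage from isotropic decay of exponent $R$ to the anisotropic product $\langle|\xi|\rangle^{-R/2}\langle\xi_2\rangle^{-R/2}$ accounting for the halving of the order --- shows that such a system is a system of $\frac12$-shearlet molecules of order $(\infty,\infty,R/2,R/2)$ (this translation is essentially already contained in \cite{Grohs2011}), and Proposition~\ref{prop:shearletmol} completes the argument. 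The chief obstacles I expect are the parameter bookkeeping in step (a) and, above all, the derivative--moment accounting in (iii); one should also state with care that the Fourier supports of the band-limited generators stay away from the axis of the anisotropic scaling direction, as this is exactly what provides infinitely many vanishing moments and hence the infinite order in (ii).
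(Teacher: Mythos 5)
Your proposal is correct and follows essentially the same route as the paper: reduce everything to Proposition~\ref{prop:shearletmol} by exhibiting the systems as $\alpha$-shearlet molecules, with the matrix identities $S_\ell A_{\beta^{-1},2^{j\beta/2}}=A^0_{\alpha,\sigma^j}S_{\ell\eta_j}$ and $2^{j(\beta+1)/4}=\sigma^{j(1+\alpha)/2}$ handling the reindexing, and, for part (iii), verifying \eqref{eq:shearcond} via the tensor structure of $\psi$ --- trading $L$ of the $M$ vanishing moments of $\psi_1$ against the derivatives $\partial^\rho$ and using $\psi_1\in C_0^{N_1}$, $\psi_2\in C_0^{N_1+N_2}$ for the frequency decay. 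This is precisely the argument in Subsection~\ref{subsec:proofashearmol}, with parts (i) and (ii) dispatched exactly as the paper does (citation of \cite{Grohs2011} and the band-limited support argument, respectively).
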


Part (i) was proved in \cite{Grohs2011}. Part (ii) uses similar arguments as
the proof of Proposition \ref{prop:acurvmol}(ii). Thus the only interesting part is part (iii). Its proof shows that
those cone-adapted $\beta$-shearlet systems are in fact instances of $\alpha$-shearlet molecules, and thus by Proposition
\ref{prop:shearletmol} also instances of $\alpha$-molecules. Since this part is rather technical, we placed it in Subsection \ref{subsec:proofashearmol}.

Thus, even various versions of shearlet systems are united under the roof of $\alpha$-molecules. From the discussed
examples, this is maybe the most notable special \pg{case} due to the already mentioned difficulty with the seemingly not
consistent (shear-based) parametrization.

%-----------------------------------------------------
\section{Analysis of the Cross-Gramian}
\label{sec:gramian}
%-----------------------------------------------------

One main goal of the theory of $\alpha$-molecules is the unified treatment of sparse approximation properties of
multiscale systems within the area of applied harmonic analysis. Thus, it is crucial to be able to compare such
properties of two different systems. This in turn requires us to consider and analyze the cross-Gramian matrix
of two systems of $\alpha$-molecules.

We now see the benefit of having a common parameter space for all systems of $\alpha$-molecules. \msch{Utilizing parametrizations will enable
a comparison of different systems despite possibly incompatible index sets}. Still, we require a \msch{notion of distance on the parameter space}.
Recalling the definition $\mathbb{P}:= \mathbb{R}_+\times\mathbb{T}\times \mathbb{R}^2$,
we observe that the parameter space is a composition of a scaling space $\mathbb{R}_+$ and what is typically termed {\it phase space}
$\mathbb{T}\times \mathbb{R}^2$. For the phase space, a pseudodistance was introduced by Smith in \cite{Smith1998a}, \msch{which}
was \msch{later} tailored to curvelet analysis by Cand\`{e}s and Demanet in \cite{CD02}, \msch{who extended it to also include the scaling space}.
\msch{This scaled version was subsequently used} (with slight adaptions) in \cite{GL08} for shearlet molecules
and in \cite{Grohs2011} for parabolic molecules.

We though now require an $\alpha$-scaled version, which can be defined in the following way.

\begin{definition} \label{defi:indexdistance}
Let $\alpha \in [0,1]$, and let $(\Lambda,\Phi_\Lambda)$ and $(\Delta,\Phi_\Delta)$ be two parametrizations. We
then define the associated \emph{$\alpha$-scaled index distance} $\omega_\alpha : \Lambda \times \Delta \to \msch{[1,\infty)}$
as follows. For two indices $\lambda\in\Lambda$ and $\mu\in\Delta$ and associated images in $\mathbb{P}$ denoted by
\[
\left(s_\lambda , \theta_\lambda , x_\lambda\right) := \Phi_\Lambda(\lambda) \quad \mbox{and} \quad
\left(s_\mu , \theta_\mu , x_\mu\right) := \Phi_\Delta(\mu),
\]
we set
\[
\omega_\alpha \left(\lambda,\mu\right):= \max\Big\{\frac{s_\lambda}{s_\mu},\frac{s_\mu}{s_\lambda} \Big\} \left(1 + d_\alpha\left(\lambda,\mu\right)\right),
\]
with $d_\alpha\left(\lambda,\mu\right)$ being defined by
\[
d_\alpha\left(\lambda,\mu \right):=s_0^{2(1-\alpha)}|\theta_\lambda - \theta_{\mu}|^2 + s_0^{2\alpha}|x_\lambda - x_{\mu}|^2 +
\frac{s_0^2}{1+s_0^{2(1-\alpha)}|\theta_\lambda-\theta_\mu|^2}|\langle e_\lambda , x_\lambda - x_{\mu}\rangle|^2,
\]
where $s_0 = \mbox{min}(s_\lambda,s_{\mu})$ and $e_\lambda = \left(\cos(\theta_\lambda),-\sin(\theta_\lambda)\right)^T=R_{-\theta_\lambda}e_1$ is the co-direction.
\end{definition}

We emphasize that $\omega_\alpha$ certainly depends on the parametrizations $(\Lambda,\Phi_\Lambda)$ and $(\Delta,\Phi_\Delta)$.
However, in order not to overload the notation, we did not explicitly specify those, since it should always be clear from the
context.

We now come to one of the main results of this paper, which essentially states that two systems of $\alpha$-molecules are
almost orthogonal with respect to the $\alpha$-scaled index distance in the sense of a strong off-diagonal decay of the
associated cross-Gramian matrix. Due to this result a higher $\alpha$-scaled index distance can be interpreted as a lower
cross-correlation of associated $\alpha$-molecules. It should be noted that we only compare $\alpha$-molecules with the
same $\alpha$, since we aim to, for instance, transfer sparse approximation properties among those classes. It might though
be very interesting for future research to also let $\alpha$-molecules for different $\alpha$'s interact.

Let us now state the anticipated theorem on the cross-Gramian of two systems of $\alpha$-molecules. Its proof is technically
very involved and lengthy, wherefore we postpone it to Section \ref{subsec:proofalmostorth}.

\begin{theorem}\label{thm:almostorth}
Let $\alpha\in[0,1]$, and let $(m_\lambda)_{\lambda\in \Lambda}$ and $(p_{\mu})_{\mu\in\Delta}$ be two systems of
$\alpha$-molecules of order $(L,M,N_1,N_2)$. Further assume that there exists some constant $c > 0$ such that
\[
s_\lambda \ge c \quad\mbox{and}\quad s_\mu\ge c \quad \mbox{for all } \lambda\in\Lambda, \mu\in\Delta \mbox{ with } \left(s_\lambda , \theta_\lambda , x_\lambda\right)
:= \Phi_\Lambda(\lambda), \left(s_\mu , \theta_\mu , x_\mu\right) := \Phi_\Delta(\mu),
\]
and that there exists some constant $N\in\N$ such that
\[
L\geq 2N, \quad  M > 3N-\frac{3-\alpha}{2},\quad N_1 \geq N+\frac{1+\alpha}{2}, \quad \mbox{and} \quad N_2\geq 2N.
\]
Then
\[
\left|\left\langle m_\lambda ,p_{\mu}\right\rangle\right|\lesssim \omega_\alpha(\lambda,\mu)^{-N}
\quad \mbox{for all } \lambda\in\Lambda, \mu\in\Delta.
\]
\end{theorem}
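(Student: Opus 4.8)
The plan is to estimate the integral $\langle m_\lambda, p_\mu\rangle$ on the Fourier side, writing
\[
\langle m_\lambda, p_\mu\rangle = \int_{\R^2} \widehat{m}_\lambda(\xi)\,\overline{\widehat{p}_\mu(\xi)}\,d\xi,
\]
and to exploit the decay bound \eqref{eq:molcond} (equivalently its polar form \eqref{eq:moldecaypolar2}) satisfied by both systems. By symmetry of $\omega_\alpha$ in $\lambda,\mu$, I may assume $s_\lambda \le s_\mu$, so that $s_0 = \min(s_\lambda,s_\mu) = s_\lambda$ and $\max\{s_\lambda/s_\mu, s_\mu/s_\lambda\} = s_\mu/s_\lambda \ge 1$. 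After the change of variables $\xi \mapsto A_{\alpha,s_\lambda}R_{\theta_\lambda}\xi$ (or its $\mu$-counterpart), the two Fourier transforms become $\widehat{g}^{(\lambda)}$ and $\widehat{g}^{(\mu)}$ evaluated at affinely related arguments, and the translations $x_\lambda, x_\mu$ produce a modulation factor $e^{2\pi i\langle \xi, \cdot\rangle}$ carrying the spatial offset $x_\lambda - x_\mu$. This modulation, combined with the smoothness control parameters $L$ and $N_1, N_2$, is what will generate decay in the spatial part of $d_\alpha$ via repeated integration by parts; the frequency-support mismatch between the two wedges generates decay in the angular and scale parts.

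The key steps, in order. \emph{Step 1 (Scale separation).} Split into the regime $s_\mu/s_\lambda$ large versus $\asymp 1$. When the scales are very different, the frequency supports (annuli of radius $\asymp s_\lambda$ vs. $\asymp s_\mu$ in the relevant coordinate) barely overlap, and the vanishing-moment factor $\min\{1, s_\lambda^{-1}(1+r)\}^M$ on the lower-scale molecule — together with the polynomial decay $\langle s_\mu^{-\alpha} r\rangle^{-N_1}$ etc. on the higher-scale one — already yields a factor $(s_\mu/s_\lambda)^{-(M + N_1 - \text{const})}$; the hypotheses $M > 3N - \frac{3-\alpha}{2}$ and $N_1 \ge N + \frac{1+\alpha}{2}$ are calibrated precisely so this beats $(s_\mu/s_\lambda)^N$. \emph{Step 2 (Angular localization).} With comparable scales, change to polar coordinates and estimate the overlap of the two angular windows; the separation $|\theta_\lambda - \theta_\mu|$ forces the radial variable to be large on the overlap (the wedges only meet far from the origin), which one converts, using the $N_1, N_2$ decay, into a factor $\big(1 + s_0^{2(1-\alpha)}|\theta_\lambda-\theta_\mu|^2\big)^{-N}$. \emph{Step 3 (Spatial localization).} Integrate by parts in $\xi$ up to $L$ times (this is where $L \ge 2N$ enters), using that $\partial^\rho \widehat{g}^{(\cdot)}$ still obeys \eqref{eq:molcond} for $|\rho|\le L$; each integration by parts in the $\xi_1$-direction gains a factor $s_0^{-1}|\langle e_\lambda, x_\lambda - x_\mu\rangle|^{-1}$ and in the $\xi_2$-direction a factor $s_0^{-\alpha}|x_\lambda - x_\mu|^{-1}$, the anisotropy of the scaling matrix being responsible for the two different powers of $s_0$. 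The subtle point is the "bent" term $\frac{s_0^2}{1+s_0^{2(1-\alpha)}|\theta_\lambda-\theta_\mu|^2}|\langle e_\lambda, x_\lambda-x_\mu\rangle|^2$ in $d_\alpha$: when the directions differ, the effective localization length in the $e_\lambda$-direction shortens, which is tracked by combining the gain from Step 2 with the gain from integration by parts in the co-direction. \emph{Step 4 (Assembly).} Multiply the three gains, handle the trivial bound $|\langle m_\lambda, p_\mu\rangle| \lesssim 1$ coming from Cauchy–Schwarz and the $L^2$-normalization of the molecules, and observe $\omega_\alpha(\lambda,\mu)^{-N} \asymp (s_\mu/s_\lambda)^{-N}(1+d_\alpha)^{-N}$, so the product of the separate decay estimates dominates the claimed bound.

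The main obstacle, I expect, is \textbf{Step 3 combined with the directional bookkeeping in Step 2–3}: one cannot integrate by parts freely because the amplitude $\widehat{g}^{(\lambda)}$ depends on $\lambda$ only through \eqref{eq:molcond}, so every derivative must be controlled uniformly, and one must carefully choose along which (rotated, anisotropic) direction to integrate by parts, depending on which coordinate of $x_\lambda - x_\mu$ is large and on whether $s_0^{2(1-\alpha)}|\theta_\lambda-\theta_\mu|^2$ is large or small. Getting the three contributions to $d_\alpha$ — the flat $s_0^{2\alpha}|x_\lambda - x_\mu|^2$ term, the angular term, and the curvature term — to emerge with the \emph{same} exponent $N$ requires balancing the number of integrations by parts spent in each direction against the available smoothness budget $(L, N_1, N_2)$, and it is this balancing that dictates the precise inequalities $L \ge 2N$, $M > 3N - \frac{3-\alpha}{2}$, $N_1 \ge N + \frac{1+\alpha}{2}$, $N_2 \ge 2N$ in the hypothesis. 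A clean way to organize the bookkeeping is to first prove the estimate for a single canonical pair (both molecules centered at the origin, one with $\theta = 0$) and then reduce the general case to it by the affine change of variables, so that all the geometry is concentrated in tracking how $d_\alpha$ transforms.
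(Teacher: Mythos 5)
Your proposal follows essentially the same route as the paper's proof: estimate $\langle \hat m_\lambda,\hat p_\mu\rangle$ on the Fourier side, obtain the scale and angular decay from a polar-coordinate estimate of the product of the two amplitude envelopes (after decoupling the radial and angular variables at the cost of part of the moment budget $M$), and obtain the spatial part of $d_\alpha$ by anisotropic, direction-dependent integration by parts — which the paper packages as repeated application of the self-adjoint operator $\mathcal{L} = I - s_0^{2\alpha}\Delta_\xi - \tfrac{s_0^2}{1+s_0^{2(1-\alpha)}|\delta\theta|^2}\partial_{\xi_1}^2$, for which $e^{-2\pi i\xi\cdot\delta x}$ is an eigenfunction with eigenvalue exactly $1$ plus the spatial terms of $d_\alpha$. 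Your identification of the delicate points (uniform control of derivatives of the $\lambda$-dependent amplitudes, the "bent" term coupling $\delta\theta$ with $\langle e_\lambda,\delta x\rangle$, and the budget balancing that dictates the hypotheses on $L,M,N_1,N_2$) matches where the paper's technical lemmas do their work.
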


This result provides us with a fundamental property of $\alpha$-molecules, which can be explored in various ways.
Perhaps one of the most notable applications is the classification and analysis of $\alpha$-molecules with
respect to their (sparse) approximation properties, which we will present in the following section.

%-----------------------------------------------------
\section{Sparse Approximations}
\label{sec:approx}
%-----------------------------------------------------

One main goal of introducing the framework of $\alpha$-molecules was to unify the treatment and analysis of sparse
approximation properties of multiscale systems constructed by applied harmonic analysis methodologies. In this
section we will now show that
\begin{itemize}
\item[(I)] $\alpha$-molecules can be categorized by their approximation behavior,
\item[(II)] sparse approximation results can be transferred from one system of $\alpha$-molecules to another,
\item[(III)] sparse approximation results can be concluded from the order of a system of $\alpha$-molecules.
\end{itemize}
Goal (I) will be discussed in Subsection \ref{subsec:sparsityequiv} and resolved by utilizing the notion of
sparsity equivalence from \cite{Grohs2011} and the novel notion of consistency of parameterizations. Goal (II)
will be analyzed in Subsection \ref{subsec:transfer} and we focus in particular on transferring sparse
approximation results from $\alpha$-curvelet and shearlet molecules. But the developed mechanisms can also
be employed for other systems. Finally, Goal (III) is studied in the case of optimally sparse approximation
of specific so-called cartoon-like functions in Subsection \ref{subsec:cartoon}. The basic idea in this part
will be that certain sparse approximation results known for $\alpha$-curvelets can be transferred to
sparsity equivalent systems leading, roughly speaking, to stand-alone conditions for the order of a
system of $\alpha$-molecules. Again, this approach can also be applied to other scenarios and shall also
show the power of this new framework.

Prior to this endeavour, we briefly recall some necessary aspects of approximation theory focussing on the
Hilbert space $L^2(\R^2)$, in which also $\alpha$-molecules are defined. Given some system $(m_\lambda)_{\lambda\in \Lambda} \subseteq L^2(\R^2)$,
one typically aims to efficiently represent/encode functions $f \in  L^2(\R^2)$ by the coefficients $c_\lambda\in\R$ of the expansion
\begin{align}\label{eq:expansion}
f = \sum_{\lambda\in \Lambda} c_\lambda m_\lambda.
\end{align}
Often efficiency can be improved by allowing the system $(m_\lambda)_{\lambda\in \Lambda}$ to be redundant. This
leads to the notion of a frame, which adds stability to redundancy (see, e.g., \cite{Christensen2003a}). A system of
functions $(m_\lambda)_{\lambda\in\Lambda}$ in $L^2(\R^2)$ forms a \emph{frame}, if there exist constants $A,B>0$,
called the \emph{frame bounds}, such that
\[
A\|f\|^2 \le \sum_{\lambda\in\Lambda} |\langle f,m_\lambda \rangle|^2 \le B\|f\|^2 \text{ for all }f\in L^2(\R^2).
\]
If $A$ and $B$ can be chosen equal the frame is called \emph{tight}. In case $A=B=1$ one speaks of a \emph{Parseval frame}.
The associated frame operator $S:L^2(\R^2)\rightarrow L^2(\R^2)$ is given by $Sf=\sum_{\lambda\in\Lambda} \langle f,m_\lambda \rangle m_\lambda$.
Since $S$ is always invertible, the system $(S^{-1}m_\lambda)_\lambda$ is also a frame, referred to as the \emph{canonical dual frame}.
It can be used to compute a particular sequence of coefficients in the expansion \eqref{eq:expansion} via
\[
c_\lambda=\langle f,S^{-1}m_\lambda \rangle, \quad \lambda\in\Lambda.
\]
This sequence however is usually not the only one possible. Unlike the expansion in a basis, a representation with
respect to a frame needs certainly not be unique. The canonical dual frame can also be used to express $f$ in terms
of the {\em frame coefficients} $(\langle f,m_\lambda \rangle)_\lambda$ by
\[
f = \sum_{\lambda\in \Lambda} \langle f,m_\lambda \rangle S^{-1} m_\lambda.
\]
In general, any system $(\tilde{m}_\lambda)_{\lambda\in \Lambda}$ satisfying this reconstruction formula for all
$f \in L^2(\R^2)$ is called an associated {\em dual frame}.

Let us now turn to the question of efficient encoding. In practice we have to restrict to finite expansions \eqref{eq:expansion},
which usually leads to an approximation error. Given a positive integer $N$, the best $N$-term approximation $f_N$ of some
function \pg{$f\in L^2(\R^2)$} with respect to the system $(m_\lambda)_\lambda$ is defined by
\[
    f_N = \argmin \Big\|f -  \sum_{\lambda \in \Lambda_N}c_\lambda m_\lambda\Big\|_2^2 \quad\mbox{s.t.}\quad \#\Lambda_N \le N.
\]
One can now analyze the rate at which the approximation error $\|f-f_N\|_2$ decays as $N \to \infty$. If we restrict the set
of data $f$ to a class $\cC \subseteq L^2(\R^2)$, we can say that a system $(m_\lambda)_\lambda$ provides {\em optimally
sparse approximations}, if this decay is the fastest among all systems in $L^2(\R^2)$ for each member $f$ of $\cC$.

The computation of the best $N$-term approximation by frames is not yet fully understood, even in the special case
of Parseval frames. Therefore, it is common to consider as a handier substitute the $N$-term approximation, obtained by
keeping the $N$ largest coefficients. Obviously, this approximation provides a bound for the best $N$-term approximation error.

%-----------------------------------------------------
\subsection{Categorization by Sparsity Equivalence}\label{subsec:sparsityequiv}
%-----------------------------------------------------

In this subsection, we aim to \ms{categorize $\alpha$-molecules} \pg{with respect to} their approximation
behavior. This will be achieved by the notion of sparsity equivalence from \cite{Grohs2011}
and the novel notion of ($\alpha,k$)-consistency, which will provide sufficient conditions
for two systems of $\alpha$-molecules \ms{to be sparsity equivalent.}

To build up intuition, we start by noticing that the $N$-term approximation rate achieved by a frame, is closely related
to the decay of the corresponding frame coefficients. Usually, the decay of a sequence -- \pg{sometimes} also called its sparsity
-- is measured by a strong or weak $\ell^p$-(quasi)-norm, for small $p>0$. Recall that the weak $\ell^p$-(quasi-)norm is
defined by
\[
\|(c_\lambda)_\lambda\|_{\omega\ell^p}:= \Big( \sup_{\varepsilon>0} \varepsilon^p \cdot \#\{\lambda: |c_\lambda|>\varepsilon \}\Big)^{1/p}.
\]
Every non-increasing rearrangement $(c^*_n)_{n\in\N}$ of $(c_\lambda)_\lambda\in\omega\ell^p$ satisfies
$\sup_{n>0} n^{1/p}|c^\ast_n| \pg{\asymp \|(c_\lambda)_\lambda\|_{\omega\ell^p}\lesssim \|(c_\lambda)_\lambda\|_{\ell^p}}$. One result showing that
membership of the coefficient sequence of $f$ in an $\ell^p$ space for small $p$ implies good $N$-term approximation
rates whenever the given representation system constitutes a frame is as follows. The respective proof can be found in
\cite{Kutyniok2010,Devore1998}), but for the convenience of the reader we also included it in Subsection \ref{subsec:proofdecayapprox}.

\begin{lemma}\label{lem:decayapprox}
Let $(m_\lambda)_{\lambda\in \Lambda}$ be a frame in $L^2(\R^2)$ and $f=\sum c_\lambda m_\lambda$ an expansion of
$f\in L^2(\R^2)$ with respect to this frame. If $(c_\lambda)_\lambda\in \omega\ell^{2/(p+1)}(\Lambda)$ for some $p>0$,
then the $N$-term approximation rate for $f$ achieved by keeping the $N$ largest coefficients is at least of order $N^{-p/2}$, i.e.
\[
 \| f-f_N \|_2^2 \lesssim N^{-p}.
\]
\end{lemma}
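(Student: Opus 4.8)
The plan is to reduce the $N$-term approximation error bound to a purely sequence-space estimate, then invoke the frame property. First I would fix the expansion $f = \sum_{\lambda \in \Lambda} c_\lambda m_\lambda$ with $(c_\lambda)_\lambda \in \omega\ell^{2/(p+1)}(\Lambda)$, and let $(c_n^*)_{n \in \N}$ denote a non-increasing rearrangement of $(|c_\lambda|)_\lambda$, with $\Lambda_N$ the index set of the $N$ largest coefficients. Writing $f_N = \sum_{\lambda \in \Lambda_N} c_\lambda m_\lambda$ (which is a legal $N$-term approximant, hence only bounds the best one from above), the error is $\|f - f_N\|_2 = \|\sum_{\lambda \notin \Lambda_N} c_\lambda m_\lambda\|_2$. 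The key structural input is the upper frame bound, which for a frame $(m_\lambda)_\lambda$ with bound $B$ yields, via duality (the synthesis operator of a frame is bounded), $\|\sum_{\lambda \in \Lambda'} d_\lambda m_\lambda\|_2^2 \lesssim \sum_{\lambda \in \Lambda'} |d_\lambda|^2$ for any subset $\Lambda' \subseteq \Lambda$ and any square-summable coefficients. Applying this with $\Lambda' = \Lambda \setminus \Lambda_N$ gives $\|f - f_N\|_2^2 \lesssim \sum_{n > N} (c_n^*)^2$.

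Next I would carry out the standard tail estimate for weak-$\ell^q$ sequences with $q = 2/(p+1) < 2$. From the excerpt's own remark, $(c_\lambda)_\lambda \in \omega\ell^q$ implies $c_n^* \lesssim \|(c_\lambda)_\lambda\|_{\omega\ell^q} \, n^{-1/q}$. Therefore
\[
\sum_{n > N} (c_n^*)^2 \lesssim \sum_{n > N} n^{-2/q} \lesssim N^{1 - 2/q},
\]
where the last step is the usual comparison of a decreasing series tail with an integral, valid since $2/q = p+1 > 1$. Finally, $1 - 2/q = 1 - (p+1) = -p$, so $\|f - f_N\|_2^2 \lesssim N^{-p}$, which is exactly the claimed bound. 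Since the best $N$-term error is no larger than the error of this particular thresholding approximant, the conclusion follows.

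The only genuinely delicate point is the very first step: justifying $\|\sum_{\lambda \in \Lambda'} d_\lambda m_\lambda\|_2^2 \lesssim \sum_\lambda |d_\lambda|^2$. This is where the frame hypothesis is essential — it is not true for arbitrary systems. I would obtain it by noting that the upper frame inequality $\sum_\lambda |\langle g, m_\lambda\rangle|^2 \le B\|g\|^2$ says precisely that the analysis operator is bounded with norm $\le \sqrt{B}$, hence its adjoint, the synthesis operator $(d_\lambda)_\lambda \mapsto \sum_\lambda d_\lambda m_\lambda$, is bounded from $\ell^2(\Lambda)$ to $L^2(\R^2)$ with the same norm; restricting to coefficients supported on $\Lambda'$ only decreases the $\ell^2$-norm. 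Everything else is elementary real analysis. I expect the write-up to be short; the main care is simply in keeping the roles of $p$ and $q = 2/(p+1)$ straight and in correctly citing the weak-$\ell^q$ decay estimate already recorded before the lemma.
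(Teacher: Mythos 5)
Your proof is correct and follows essentially the same route as the paper's: rearrange the coefficients, bound the tail $\sum_{n>N}(c_n^*)^2$ via the weak-$\ell^{2/(p+1)}$ decay and an integral comparison, and control $\|f-f_N\|_2^2$ by that tail using the frame property. The only difference is that you spell out the step the paper leaves implicit, namely that the upper frame bound makes the synthesis operator bounded on $\ell^2$, which is a welcome clarification rather than a deviation.
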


As illustrated by Lemma~\ref{lem:decayapprox} the decay rate of the frame coefficients determines the $N$-term approximation rate.
\pg{In particular, if the sequence $\left(\left\langle f, m_\lambda\right\rangle\right)_{\lambda\in \Lambda}$
of frame coefficients lies in an $\ell^p$ space
for $p<1$, then
the best approximation rate of the \emph{dual frame}
$\left(\tilde{m}_\lambda\right)_{\lambda\in\Lambda}$
is at least of order $N^{-(1/p -1/2)}$. In terms of
signal compression this is exactly what one hopes for:
from simply keeping the $N$ largest frame coefficients (which can be encoded by order $N$ bits)
we can reconstruct the original signal $f$
up to a precision of order $N^{-(1/p -1/2)}$.
}
Let us assume that we have two systems $(m_\lambda)_{\lambda\in \Lambda}$ and $(p_\mu)_{\mu \in \Delta}$ in $L^2(\R^2)$ and expansion
coefficients for $f\in L^2(\R^2)$ with respect to these two systems. Then these systems provide the same $N$-term approximation rate
for $f$ if the corresponding expansion coefficients have similar decay, e.g.\ if they belong to the same $\ell^p$-space.

%\gk{Please make this precise with a general result along the following lines...}

%\comment{This result can only transfer the coefficient decay, not the $N$-term approximation rates. Precisely,
%sparsity equivalence transfers the decay of synthesis coefficients in one system to decay of analysis coefficients in another system.}

\begin{prop}
\label{prop:justification}
\pg{Let $0 < p \le 1$, let $f \in L^2(\R^2)$, and let $(m_\lambda)_{\lambda\in \Lambda}$ and $(p_\mu)_{\mu \in \Delta}$ be \ms{frames} such that
\[
\left\|\left(\langle  m_{\lambda},p_\mu\rangle\right)_{\lambda\in \Lambda, \mu\in \Delta}\right\|_{\ell^p\to \ell^p}<\infty.
\]
Moreover, let $(\tilde{m}_\lambda)_{\lambda\in \Lambda}$ be a dual frame for $(m_\lambda)_{\lambda\in \Lambda}$.
\ms{Then $(\langle f,\tilde{m}_\lambda \rangle)_\lambda \in \ell^p(\Lambda)$
implies $(\langle f,p_\mu \rangle)_\mu \in \ell^p(\Delta)$.}
In particular, $f$ can be encoded by the $N$ largest
frame coefficients from $(\langle f,p_\mu \rangle)_\mu$ up to accuracy $\lesssim N^{-(1/p-1/2)}$.}
% \comment{This in turn
%implies a certain $N$-term approximation rate. In this generality more cannot be proved however!}
\end{prop}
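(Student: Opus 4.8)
The plan is to exploit the matrix bound $\|(\langle m_\lambda,p_\mu\rangle)_{\lambda,\mu}\|_{\ell^p\to\ell^p}<\infty$ to move sparsity from the dual frame of $(m_\lambda)$ to the analysis coefficients of $(p_\mu)$. The key observation is that, since $(\tilde m_\lambda)_\lambda$ is a dual frame for $(m_\lambda)_\lambda$, we have the reconstruction formula $f=\sum_{\lambda\in\Lambda}\langle f,\tilde m_\lambda\rangle\, m_\lambda$. Taking the inner product with $p_\mu$ gives
\[
\langle f,p_\mu\rangle=\sum_{\lambda\in\Lambda}\langle f,\tilde m_\lambda\rangle\,\langle m_\lambda,p_\mu\rangle,
\]
so the sequence $(\langle f,p_\mu\rangle)_{\mu\in\Delta}$ is exactly the image of $(\langle f,\tilde m_\lambda\rangle)_{\lambda\in\Lambda}$ under the (transpose of the) cross-Gramian matrix $G:=(\langle m_\lambda,p_\mu\rangle)_{\lambda\in\Lambda,\mu\in\Delta}$.

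First I would make precise the action of $G$ as a bounded operator on $\ell^p$: by hypothesis $G$ maps $\ell^p(\Lambda)\to\ell^p(\Delta)$ boundedly, and one should check that the formal series above indeed converges and represents this operator applied to $(\langle f,\tilde m_\lambda\rangle)_\lambda$. This is where a small amount of care is needed, but it is routine: since $(\langle f,\tilde m_\lambda\rangle)_\lambda\in\ell^p\subseteq\ell^1$ (as $p\le 1$) and the columns of $G$ are uniformly bounded (each $|\langle m_\lambda,p_\mu\rangle|\le\|m_\lambda\|\,\|p_\mu\|$ is bounded by the frame bounds), the double sum is absolutely summable in the appropriate sense, so interchanging summation and inner product is justified. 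Then $(\langle f,p_\mu\rangle)_\mu = G^{\mathsf T}(\langle f,\tilde m_\lambda\rangle)_\lambda \in\ell^p(\Delta)$, with $\|(\langle f,p_\mu\rangle)_\mu\|_{\ell^p}\lesssim \|(\langle f,\tilde m_\lambda\rangle)_\lambda\|_{\ell^p}$.

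Finally, to get the encoding statement, I would invoke Lemma~\ref{lem:decayapprox}: if $(\langle f,p_\mu\rangle)_\mu\in\ell^p(\Delta)$ with $0<p\le1$, then in particular it lies in $\omega\ell^{2/(q+1)}$ with $q=1/p-1/2$ chosen so that $2/(q+1)=p$ — more directly, $\ell^p\hookrightarrow\omega\ell^p$, and writing $p=2/(q+1)$ yields $q=2/p-1$; but the relevant bound from the lemma is that keeping the $N$ largest coefficients of $(\langle f,p_\mu\rangle)_\mu$ (which forms a frame) yields $\|f-f_N\|_2^2\lesssim N^{-q}$ with $q=2/p-1$, i.e. accuracy $N^{-(1/p-1/2)}$ after taking square roots. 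Since $(p_\mu)_\mu$ is a frame and the coefficient sequence lies in $\ell^p\subseteq\omega\ell^p$, Lemma~\ref{lem:decayapprox} applies with the exponent identification $2/(p'+1)=p$, giving $p'=2/p-1$ and hence the claimed rate $\lesssim N^{-(1/p-1/2)}$.

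The main obstacle here is essentially bookkeeping rather than a genuine mathematical difficulty: one must correctly justify the interchange of the infinite sum with the inner product (using $p\le1$ so that $\ell^p\subseteq\ell^1$ and the uniform boundedness of the Gramian entries) and then correctly match the exponents when invoking Lemma~\ref{lem:decayapprox}, being careful that the lemma is stated for the $N$-term approximation obtained by keeping the largest coefficients — which is exactly what the final sentence of the proposition asserts.
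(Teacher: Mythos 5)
Your argument is correct and is essentially the paper's own proof: expand $f=\sum_\lambda\langle f,\tilde m_\lambda\rangle m_\lambda$, pair with $p_\mu$, and observe that $(\langle f,p_\mu\rangle)_\mu$ is the image of the $\ell^p$-sequence $(\langle f,\tilde m_\lambda\rangle)_\lambda$ under the bounded cross-Gramian. Your additional remarks on justifying the interchange of summation and on matching the exponent in Lemma~\ref{lem:decayapprox} (via $2/(p'+1)=p$, so $p'=2/p-1$) are correct and merely make explicit what the paper leaves implicit.
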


\begin{proof}
For fixed $\mu \in \Delta$, we have
\[
\langle f,p_\mu \rangle = \Big\langle \sum_{\lambda \in \Lambda} \ip{f}{\tilde{m}_\lambda} m_\lambda,p_\mu \Big\rangle =
\sum_{\lambda \in \Lambda} \ip{f}{\tilde{m}_\lambda} \left\langle  m_\lambda,p_\mu \right\rangle.
\]
Thus $(\langle f,\tilde{m}_\lambda \rangle)_\lambda \in \ell^p(\Lambda)$ and
$\|\left(\langle  m_{\lambda},p_\mu\rangle\right)_{\lambda\in \Lambda, \mu\in \Delta}\|_{\ell^p\to \ell^p}<\infty$
imply $(\langle f,p_\mu \rangle)_\mu \in \ell^p(\Delta)$.
\end{proof}

This result motivates the following notion of sparsity equivalence initially introduced in~\cite{Grohs2011} for
parabolic molecules.
\pg{

\begin{definition}
Let  $0 < p \le 1$, and let $(m_\lambda)_{\lambda\in \Lambda}$ and $(p_\mu)_{\mu \in \Delta}$ be
frames. Then $(m_\lambda)_{\lambda\in \Lambda}$ and $(p_\mu)_{\mu \in \Delta}$
are {\em sparsity equivalent in $\ell^p$}, if
\[
\left\|\left(\langle {m}_{\lambda},p_\mu\rangle\right)_{\lambda\in \Lambda, \mu\in \Delta}\right\|_{\ell^p\to \ell^p}<\infty.
\]
\end{definition}
}
\ms{
The concept of sparsity equivalence allows to extend approximation properties from one anchor system to other systems,
if the coefficient decay of the anchor system is known.
This notion however, does not provide an equivalence relation.
}
 We further emphasize that \ms{sparsity equivalence} depends sensitively on the regularity parameter $0 < p \le 1$.

%\begin{prop}
%Sparsity equivalence is an equivalence relation on the set of systems of $\alpha$-molecules.
%\end{prop}

%\begin{proof}
%\gk{Provide a short argument.} \comment{Unfortunately, sparsity equivalence is not an equivalence relation.
%In general, it is not even reflexive. Here one sufficient condition would be consistency with itself.
%It is not transitive for non-tight frames. Symmetry is usually also not valid. }
%\end{proof}

Having introduced sparsity equivalence for \pg{frames}, we now require sufficient conditions
for two systems of $\alpha$-molecules to be sparsity equivalent. We expect this to depend on the one hand on
the respective orders of those systems. On the other hand, now the relation of the parametrizations becomes crucial leading
to the notion of ($\alpha,k$)-consistency. To motivate this novel concept, we first recall a simple estimate for the
operator norm of a matrix on discrete $\ell^p$ spaces from \cite{Grohs2011}.

\begin{lemma}[\cite{Grohs2011}]\label{lem:matnorm}
Let $\Lambda,\Delta$ be two discrete index sets, and let ${\bf A} : \ell^p(\Lambda)\to \ell^p(\Delta)$, $p>0$ be a linear mapping
defined by its matrix representation ${\bf A} = \left(A_{\lambda,\mu}\right)_{\lambda\in \Lambda ,\, \mu\in \Delta}$.
Then we have the bound
\[
\|{\bf A}\|_{\ell^p(\Lambda)\to \ell^p(\Delta)}\le\max\left\{\sup_{\lambda \in \Lambda}\sum_{\mu \in \Delta}|A_{\lambda,\mu}|^{q},
\sup_{\mu \in \Delta}\sum_{\lambda \in \Lambda}|A_{\lambda,\mu}|^q\right\}^{1/q},
\]
where $q:=\min\{1,p\}$.
\end{lemma}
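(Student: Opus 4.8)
The plan is to verify the asserted bound directly on an arbitrary finitely supported sequence $x=(x_\lambda)_{\lambda\in\Lambda}$ and then pass to the limit, using that such sequences are dense in $\ell^p(\Lambda)$ for $0<p<\infty$. I would write the action of ${\bf A}$ as $({\bf A}x)_\mu=\sum_{\lambda\in\Lambda}A_{\lambda,\mu}x_\lambda$ for $\mu\in\Delta$ (the opposite convention is handled identically, since the claimed estimate is symmetric in $\Lambda$ and $\Delta$). Because all the quantities $|A_{\lambda,\mu}|$ and $|x_\lambda|$ are nonnegative, Tonelli's theorem allows the order of summation over $\Lambda$ and $\Delta$ to be interchanged freely throughout. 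The natural split is into the two regimes distinguished by $q=\min\{1,p\}$: the case $0<p\le1$ (where $q=p$) and the case $p>1$ (where $q=1$).

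In the case $0<p\le1$ I would invoke the elementary $p$-subadditivity $\bigl|\sum_i a_i\bigr|^p\le\sum_i|a_i|^p$, which gives
\[
\norm{{\bf A}x}_{\ell^p(\Delta)}^p=\sum_{\mu\in\Delta}\Bigl|\sum_{\lambda\in\Lambda}A_{\lambda,\mu}x_\lambda\Bigr|^p
\le\sum_{\lambda\in\Lambda}|x_\lambda|^p\sum_{\mu\in\Delta}|A_{\lambda,\mu}|^p
\le\Bigl(\sup_{\lambda\in\Lambda}\sum_{\mu\in\Delta}|A_{\lambda,\mu}|^p\Bigr)\norm{x}_{\ell^p(\Lambda)}^p
\]
after interchanging the sums and pulling out the supremum. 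Taking $p$-th roots and bounding the single row-sum supremum by the maximum of the two suprema in the statement yields the claim in this regime; in fact only one of the two terms is needed here.

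In the case $p>1$ I would run the classical Schur test. Writing $p'$ for the conjugate exponent and setting $C_1:=\sup_{\mu}\sum_{\lambda}|A_{\lambda,\mu}|$ and $C_2:=\sup_{\lambda}\sum_{\mu}|A_{\lambda,\mu}|$, I would factor $|A_{\lambda,\mu}|=|A_{\lambda,\mu}|^{1/p'}\cdot|A_{\lambda,\mu}|^{1/p}$ and apply H\"older's inequality in the $\lambda$-sum to obtain, for each $\mu$,
\[
\Bigl|\sum_{\lambda}A_{\lambda,\mu}x_\lambda\Bigr|^p\le\Bigl(\sum_\lambda|A_{\lambda,\mu}|\Bigr)^{p/p'}\sum_\lambda|A_{\lambda,\mu}|\,|x_\lambda|^p\le C_1^{\,p-1}\sum_\lambda|A_{\lambda,\mu}|\,|x_\lambda|^p.
\]
Summing over $\mu\in\Delta$, interchanging the sums, and using $\sum_\mu|A_{\lambda,\mu}|\le C_2$ gives $\norm{{\bf A}x}_{\ell^p}^p\le C_1^{\,p-1}C_2\,\norm{x}_{\ell^p}^p$, hence $\norm{{\bf A}}_{\ell^p\to\ell^p}\le C_1^{(p-1)/p}C_2^{1/p}\le\max\{C_1,C_2\}$, which is precisely $\max\{C_1,C_2\}^{1/q}$ since $q=1$ in this range.

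There is no substantial obstacle here; the proof is essentially bookkeeping. The only genuine decisions are to pick the right convexity tool in each range — $p$-subadditivity when $p\le1$ versus H\"older when $p>1$ — so that the arising exponents recombine to $1/q$, and to note that all interchanges of summation are legitimate by nonnegativity, the density remark then covering general $x\in\ell^p(\Lambda)$.
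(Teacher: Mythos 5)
Your proof is correct. The paper itself states this lemma with a citation to \cite{Grohs2011} and gives no proof, so there is nothing internal to compare against; your argument is the standard one for this interpolation-type Schur bound. Both halves check out: for $0<p\le 1$ the $p$-subadditivity $\bigl|\sum_i a_i\bigr|^p\le\sum_i|a_i|^p$ combined with Tonelli gives $\|{\bf A}\|\le\bigl(\sup_\lambda\sum_\mu|A_{\lambda,\mu}|^p\bigr)^{1/p}$, which is dominated by the stated maximum (and, as you observe, only one of the two suprema is actually used in this range); for $p>1$ the factorization $|A_{\lambda,\mu}|=|A_{\lambda,\mu}|^{1/p'}|A_{\lambda,\mu}|^{1/p}$, H\"older in $\lambda$, and the exponent identity $p/p'=p-1$ yield $\|{\bf A}\|\le C_1^{(p-1)/p}C_2^{1/p}\le\max\{C_1,C_2\}$, matching the claim since $q=1$ there. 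Your remarks on the symmetry of the statement under swapping the roles of $\Lambda$ and $\Delta$, on the legitimacy of interchanging sums by nonnegativity, and on density of finitely supported sequences are all appropriate, so the argument is complete.
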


Aiming for sufficient conditions for the right hand side -- in the situation \msch{of ${\bf A}$} being the Gramian of two
systems of $\alpha$-molecules -- to be finite, also taking the estimate provided in Theorem \ref{thm:almostorth}
into account, it seems appropriate to introduce the following notion.

\begin{definition}\label{def:kadmiss}
Let $\alpha \in [0,1]$ and $k > 0$. Two parametrizations $(\Lambda,\Phi_\Lambda)$ and $(\Delta,\Phi_\Delta)$
are called \emph{($\alpha,k$)-consistent}, if
\[
\sup_{\lambda\in \Lambda}\sum_{\mu\in\Delta}\omega_{\alpha}\left(\lambda,\mu\right)^{-k} <\infty \quad\text{and}\quad \sup_{\mu\in \Delta}\sum_{\lambda\in\Lambda}
\omega_{\alpha}\left(\lambda,\mu\right)^{-k} <\infty.
\]
\end{definition}

As expected, this notion leads to a convenient sufficient condition for sparsity equivalence of $\alpha$-molecules.

\begin{theorem}
\label{theo:sparseeqivmol}
Let $\alpha \in [0,1]$, $k > 0$, and $0<p\le1$.  Let $(m_\lambda)_{\lambda\in\Lambda}$ and $(p_\mu)_{\mu\in\Delta}$ be two frames of $\alpha$-molecules of order
$(L,M,N_1,N_2)$ with ($\alpha,k$)-consistent parametrizations $(\Lambda,\Phi_\Lambda)$ and $(\Delta,\Phi_\Delta)$ satisfying
\[
L\geq 2\frac{k}{p},\quad  M> 3\frac{k}{p} - \frac{3-\alpha}{2} ,\quad N_1 \geq \frac{k}{p}+\frac{1+\alpha}{2} ,  \quad \mbox{and} \quad  N_2\geq 2\frac{k}{p}.
\]
Then $(m_\lambda)_{\lambda\in\Lambda}$ and $(p_\mu)_{\mu\in\Delta}$ are sparsity equivalent in $\ell^p$.
\end{theorem}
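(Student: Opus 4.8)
The plan is to combine the almost-orthogonality estimate of Theorem~\ref{thm:almostorth} with the operator-norm bound of Lemma~\ref{lem:matnorm} and the summability afforded by $(\alpha,k)$-consistency. Set $q:=\min\{1,p\}=p$ (since $p\le 1$) and let $N:=\lceil k/p\rceil$, or rather work with the real exponent $k/p$ directly. The entries of the cross-Gramian are $A_{\lambda,\mu}=\langle m_\lambda,p_\mu\rangle$, and Lemma~\ref{lem:matnorm} tells us it suffices to bound
\[
\sup_{\lambda\in\Lambda}\sum_{\mu\in\Delta}|\langle m_\lambda,p_\mu\rangle|^{p}
\quad\text{and}\quad
\sup_{\mu\in\Delta}\sum_{\lambda\in\Lambda}|\langle m_\lambda,p_\mu\rangle|^{p}.
\]

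\textbf{Key steps.} First I would invoke Theorem~\ref{thm:almostorth}. The hypotheses on the order translate as follows: choosing the integer parameter in that theorem to be any $N\in\N$ with $N\ge k/p$, the conditions $L\ge 2\frac{k}{p}$, $M>3\frac{k}{p}-\frac{3-\alpha}{2}$, $N_1\ge\frac{k}{p}+\frac{1+\alpha}{2}$, $N_2\ge 2\frac{k}{p}$ imply $L\ge 2N'$, $M>3N'-\frac{3-\alpha}{2}$, $N_1\ge N'+\frac{1+\alpha}{2}$, $N_2\ge 2N'$ for $N'=\lceil k/p\rceil$ (one should note that $M$ and $N_1$, $N_2$ are monotone in the exponent so it is enough to check the smallest admissible $N'$; a small care is needed to see that the strict inequality for $M$ is preserved, which it is since $\lceil k/p\rceil\ge k/p$). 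Also the hypothesis of Theorem~\ref{thm:almostorth} that $s_\lambda,s_\mu\ge c$ holds because $\alpha$-molecule systems that are frames cannot have scales accumulating at $0$ — alternatively this is part of the standing assumptions and I would just cite it. Thus Theorem~\ref{thm:almostorth} yields
\[
|\langle m_\lambda,p_\mu\rangle|\lesssim \omega_\alpha(\lambda,\mu)^{-N'},\qquad N':=\lceil k/p\rceil .
\]
Raising to the $p$-th power gives $|\langle m_\lambda,p_\mu\rangle|^{p}\lesssim \omega_\alpha(\lambda,\mu)^{-pN'}$, and since $pN'=p\lceil k/p\rceil\ge k$ and $\omega_\alpha\ge 1$, we have $\omega_\alpha(\lambda,\mu)^{-pN'}\le\omega_\alpha(\lambda,\mu)^{-k}$. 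Therefore
\[
\sup_{\lambda\in\Lambda}\sum_{\mu\in\Delta}|\langle m_\lambda,p_\mu\rangle|^{p}
\lesssim \sup_{\lambda\in\Lambda}\sum_{\mu\in\Delta}\omega_\alpha(\lambda,\mu)^{-k}<\infty
\]
by $(\alpha,k)$-consistency, and symmetrically for the other supremum. Feeding this into Lemma~\ref{lem:matnorm} with $q=p$ shows $\|(\langle m_\lambda,p_\mu\rangle)\|_{\ell^p\to\ell^p}<\infty$, which is precisely sparsity equivalence in $\ell^p$.

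\textbf{Main obstacle.} The only genuinely delicate point is the bookkeeping that matches the order hypotheses of this theorem, stated with the \emph{real} exponent $k/p$, to the \emph{integer} parameter $N$ demanded by Theorem~\ref{thm:almostorth}, while keeping the strict inequality on $M$ intact and ensuring all four inequalities survive the rounding. Everything else is a direct chaining of two already-proven results. I would also remark, as the authors do, that both systems being \emph{frames} is used only to make ``sparsity equivalence'' a meaningful notion (it is defined for frames); the Gramian bound itself needs only that they are systems of $\alpha$-molecules with the stated orders and bounded-below scales.
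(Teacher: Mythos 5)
Your proposal follows the paper's proof exactly in structure: Theorem~\ref{thm:almostorth} supplies the off-diagonal decay of the cross-Gramian, raising to the $p$-th power and invoking $(\alpha,k)$-consistency bounds the two suprema, and Lemma~\ref{lem:matnorm} converts this into the $\ell^p\to\ell^p$ bound. The paper's own proof is precisely this chain, written in three lines; it invokes Theorem~\ref{thm:almostorth} directly with the exponent $k/p$, so that $|\langle m_\lambda,p_\mu\rangle|^p\lesssim\omega_\alpha(\lambda,\mu)^{-k}$ on the nose.

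The one place where you deviate is the rounding step, and there your argument runs backwards. You claim that $L\ge 2\tfrac{k}{p}$, $M>3\tfrac{k}{p}-\tfrac{3-\alpha}{2}$, etc.\ imply the corresponding conditions with $N'=\lceil k/p\rceil$ in place of $k/p$. Since $\lceil k/p\rceil\ge k/p$, those conditions are \emph{stronger}, not weaker: for instance $k/p=3/2$ and $L=3$ satisfies $L\ge 2k/p$ but not $L\ge 2\lceil k/p\rceil=4$. Taking instead $N'=\lfloor k/p\rfloor$ \emph{is} licensed by the hypotheses, but then $p N'\le k$ and, because $\omega_\alpha\ge 1$, one only gets $\omega_\alpha(\lambda,\mu)^{-pN'}\ge\omega_\alpha(\lambda,\mu)^{-k}$ --- the wrong direction for comparison with the consistency sums. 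The intended reading, and what the paper does, is to apply the almost-orthogonality estimate with the real exponent $k/p$ itself: the order hypotheses of Theorem~\ref{theo:sparseeqivmol} are exactly those of Theorem~\ref{thm:almostorth} with $N=k/p$. If one insists on the integer formulation of Theorem~\ref{thm:almostorth} as stated, the hypotheses here would have to be phrased with $\lceil k/p\rceil$. A smaller point: your aside that frames of $\alpha$-molecules automatically have scales bounded away from $0$ is not justified; the condition $s_\lambda, s_\mu\ge c$ is a standing assumption inherited from Theorem~\ref{thm:almostorth}, not a consequence of the frame property. Neither issue affects the overall architecture of the argument, which is the right one and coincides with the paper's.
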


\begin{proof}
By Lemma \ref{lem:matnorm}, it suffices to prove that
\[
 \max \left\{ \sup_{\lambda\in\Lambda}\sum_{\mu\in \Delta} |\langle m_{\lambda},p_\mu\rangle|^p, \sup_{\mu\in\Delta}\sum_{\lambda\in \Lambda}
|\langle m_{\lambda},p_\mu\rangle|^p\right\}^{1/p} <\infty.
\]
Since, by Theorem \ref{thm:almostorth}, we have
\[
|\langle m_{\lambda},p_\mu\rangle|\lesssim \omega_{\alpha}(\lambda,\mu)^{-\frac{k}{p}},
\]
we can conclude that
\[
\max \left\{ \sup_{\lambda\in\Lambda}\sum_{\mu\in \Delta}|\langle m_{\lambda},p_\mu\rangle|^p,\sup_{\mu\in\Delta}\sum_{\lambda\in \Lambda}|\langle m_{\lambda},p_\mu\rangle|^p\right\}%
\lesssim\max \left\{ \sup_{\lambda\in\Lambda}\sum_{\mu\in \Delta}\omega_{\alpha}(\lambda,\mu)^{-k},\sup_{\mu\in\Delta}\sum_{\lambda\in \Lambda}\omega_{\alpha}(\lambda,\mu)^{-k}\right\}
\]
with the expression on the right hand side being finite due to the $(\alpha,k)$-consistency of the parametrizations $(\Lambda,\Phi_\Lambda)$ and $(\Delta,\Phi_\Delta)$.
The proof is completed.
\end{proof}

Thus, as long as the parametrizations are consistent, the sparsity equivalence can be controlled by the order of the
molecules. Recall that higher order means better time-frequency localization and higher moments. Hence, intuitively,
the smaller $p$ is (i.e., the more sparsity is promoted) and the less consistent the two frames of $\alpha$-molecules
are, the better their time-frequency localization and the higher their moments need to be in order for them \ms{to be
sparsity equivalent.}

%-----------------------------------------------------
\subsection{Transfer of Sparse Approximation Results}\label{subsec:transfer}
%-----------------------------------------------------

We next aim to investigate situations in which we can actually transfer sparse approximation results based on
Theorem \ref{theo:sparseeqivmol}. In Section \ref{sec:examples}, we provided a range of prominent multiscale
systems which are encompassed by the framework of $\alpha$-molecules. It became apparent that most of such
can be regarded as instances of either $\alpha$-curvelet or $\alpha$-shearlet molecules. Thus, it seems
natural to first analyze those systems with respect to ($\alpha,k$)-consistency.

For this, we recall that $\alpha$-curvelet and $\alpha$-shearlet molecules are associated with $\alpha$-curvelet
and $\alpha$-shearlet parametrizations (cf. Definitions \ref{defi:alphacurveletpara} and \ref{defi:alphashearletpara}).
The following result shows that indeed those parametrizations satisfy the consistency requirement for
any $k > 2$.

\begin{theorem}\label{thm:consist}
Let $\alpha\in[0,1]$ and $(\Lambda,\Phi_\Lambda)$ and $(\Delta,\Phi_\Delta)$ be either $\alpha$-curvelet or $\alpha$-shearlet parametrizations.
Then $(\Lambda,\Phi_\Lambda)$ and $(\Delta,\Phi_\Delta)$ are ($\alpha,k$)-consistent for all $k>2$.
\end{theorem}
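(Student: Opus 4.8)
The plan is to establish the two symmetric finiteness conditions in Definition~\ref{def:kadmiss} directly, namely that $\sup_{\lambda\in\Lambda}\sum_{\mu\in\Delta}\omega_\alpha(\lambda,\mu)^{-k}<\infty$ and its transpose, for $\alpha$-curvelet or $\alpha$-shearlet parametrizations and any $k>2$. By the definition of the $\alpha$-scaled index distance, $\omega_\alpha(\lambda,\mu)=\max\{s_\lambda/s_\mu,s_\mu/s_\lambda\}(1+d_\alpha(\lambda,\mu))$, so we may split the sum according to the scale ratio $2^{|j-j'|}$ (writing $s_\lambda=\sigma^j$, $s_\mu=\sigma^{j'}$) and then, for each fixed pair of scales, sum the purely phase-space contribution coming from $(1+d_\alpha(\lambda,\mu))^{-k}$. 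The first step is therefore to fix $\lambda$ (so $j,\ell,k$ fixed) and organize the sum over $\mu=(j',\ell',k')$ as a double sum: an outer sum over $j'\in\N_0$ weighted by $\sigma^{-k|j-j'|}$, and an inner sum over the angular and spatial indices $(\ell',k')$ at scale $j'$.

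The heart of the argument is a counting/integral-comparison estimate for the inner sum. Since the parametrizations in question sample the parameter space $\mathbb{P}$ in a structured way — the scale grid is geometric with ratio $\sigma$, the angular grid at scale $j'$ has spacing $\asymp\sigma^{-j'(1-\alpha)}$ (from $\omega_{j'}$ resp.\ $\eta_{j'}$, which is exactly the $\theta$-weight $s_0^{(1-\alpha)}$ appearing in $d_\alpha$), and the spatial grid at scale $(j',\ell')$ is the anisotropic lattice $R^{-1}A^{-1}_{\alpha,\sigma^{j'}}\tau\Z^2$ (or the sheared analogue), whose generating vectors have lengths $\asymp\sigma^{-j'}$ and $\asymp\sigma^{-\alpha j'}$ matching the spatial weights in $d_\alpha$ — the sum $\sum_{\ell',k'}(1+d_\alpha(\lambda,\mu))^{-k}$ can be compared to the integral $\int_{\mathbb{T}\times\R^2}(1+d_\alpha(\lambda,(s_{j'},\theta,x)))^{-k}\,d\theta\,dx$ up to a constant depending only on $\sigma,\tau,\alpha$ (and on $s_0=\min(s_\lambda,s_\mu)$ through the Jacobian, which is exactly cancelled by the anisotropic weights in $d_\alpha$). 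Evaluating this integral is a routine change of variables that turns $d_\alpha$ into a sum of three quadratic forms in the transformed angular and spatial variables; the integral $\int(1+|u|^2+v_1^2/(1+|u|^2)+v_2^2)^{-k}\,du\,dv$ over $\R^3$ converges precisely when $k>\tfrac32$, and in fact more carefully the critical exponent for this particular (degenerate-looking but harmless) form is $k>2$ once one accounts for the $u$-dependence of the $v_1$-weight. One should also handle the low-scale indices ($j'=0$, or the coarse-scale box, where the angular resolution is $L_{j'}\asymp1$) separately but trivially, since there only finitely many angles occur.

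Once the inner sum is bounded by a constant $C=C(\sigma,\tau,\alpha,k)$ uniform in $\lambda$ and in $j'$, the outer sum $\sum_{j'\ge0}\sigma^{-k|j-j'|}C$ is a two-sided geometric series bounded by $C\cdot\tfrac{2}{1-\sigma^{-k}}$, uniformly in $\lambda$; this yields the first consistency inequality. The second inequality, with the roles of $\lambda$ and $\mu$ swapped, follows by the same argument since $\omega_\alpha$ is symmetric in its two arguments (the definition of $d_\alpha$ uses $s_0=\min(s_\lambda,s_\mu)$ and the co-direction $e_\lambda$, but swapping $\lambda\leftrightarrow\mu$ only replaces $e_\lambda$ by $e_\mu$ and, since $|\theta_\lambda-\theta_\mu|$ is small whenever the $e$-term is not already dominated by the isotropic $|x_\lambda-x_\mu|^2$ term, $d_\alpha(\lambda,\mu)\asymp d_\alpha(\mu,\lambda)$ up to a constant — a standard fact about this pseudo-distance that I would cite from \cite{CD02} or \cite{Grohs2011} or verify in a short lemma).

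The step I expect to be the main obstacle is the integral-comparison estimate for the inner sum, specifically justifying that the anisotropic lattice sum over $(\ell',k')$ is comparable to the corresponding integral with a constant independent of the scale $j'$ and of $\lambda$. The subtlety is twofold: first, the angular variable lives on the compact torus $\mathbb{T}$ while the rescaled angular variable $\sigma^{j'(1-\alpha)}(\theta_\mu-\theta_\lambda)$ ranges over an interval whose length grows with $j'$, so one must check the tail in the angular direction is summable (it is, by the $\langle\cdot\rangle^{-2k}$-type decay coming from the first term of $d_\alpha$ with $k>\tfrac12$); second, the cross-term $\tfrac{s_0^2}{1+s_0^{2(1-\alpha)}|\theta_\lambda-\theta_\mu|^2}|\langle e_\lambda,x_\lambda-x_\mu\rangle|^2$ couples the angular and spatial variables, so the lattice-to-integral comparison must be done after a $\theta$-dependent shear of the spatial lattice; one checks that this shear has determinant one and bounded-below ``width'' in the relevant regime, so the lattice points remain uniformly separated and the Riemann-sum comparison goes through with a uniform constant. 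Carefully setting up this change of variables — essentially the one that makes $d_\alpha$ into the standard Smith/Candès–Demanet model form — is the technical crux; everything after it is bookkeeping with geometric series.
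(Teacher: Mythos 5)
Your overall architecture --- fix one index, split the double sum by scale, bound the fixed-scale phase-space sum by a lattice-to-integral comparison, then sum over scales --- is the paper's (its Lemma~\ref{lem:auxest} is exactly your ``inner sum'' estimate, and your computation that the model integral $\int (1+|u|^2+v_2^2+v_1^2/(1+|u|^2))^{-k}\,du\,dv$ converges precisely for $k>2$ is correct). But there is a genuine error in the inner-sum step which your outer-sum step then inherits. You claim the comparison constant is uniform in the summation scale $j'$ because the lattice Jacobian ``is exactly cancelled by the anisotropic weights in $d_\alpha$.'' That cancellation occurs only when the scale being summed over is the \emph{coarser} of the two: the weights in $d_\alpha$ are powers of $s_0=\min(s_\lambda,s_\mu)$, while the cell volume of the $(\ell',k')$-grid at scale $\sigma^{j'}$ is $\asymp \sigma^{-2j'}$. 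When $\sigma^{j'}>s_0$ the mismatch survives and the fixed-scale sum is $\asymp (\sigma^{j'}/s_0)^2$ times the integral, not $O(1)$ --- concretely, fixing a coarse-scale index and counting the fine-scale indices within $d_\alpha$-distance $1$ of it gives a number growing like the square of the scale ratio. This is precisely the factor $\max\{s_\lambda/s_\mu,1\}^2$ in the paper's Lemma~\ref{lem:auxest}, which your proposal omits.

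Consequently the outer sum is not $\sum_{j'}\sigma^{-k|j-j'|}C$ but $\sum_{j'}\max\{\sigma^{j'-j},\sigma^{j-j'}\}^{2-k}$, and this is the second, independent place where $k>2$ is needed; your write-up loses it entirely. Your final threshold comes out numerically right only because the fixed-scale integral already forces $k>2$, but the proof as written would wrongly suggest that the coupling between scales imposes no constraint. The repair is exactly the paper's: prove the inner bound with the extra factor $\max\{s_\lambda/s_\mu,1\}^2$ and absorb it into the geometric series, which still converges for $k>2$ since the exponent $2-k$ is negative. Two smaller remarks: the paper sidesteps your ``$\theta$-dependent shear'' worry for the cross term by an arithmetic--geometric mean step that replaces $\frac{s_0^2}{1+s_0^{2(1-\alpha)}|\delta\theta|^2}|\langle e_\lambda,\delta x\rangle|^2$ by the linear term $s_0|\langle e_\lambda,\delta x\rangle|$, after which the comparison is against a product grid; and the second consistency condition indeed follows by symmetry of roles of the two parametrizations (your quasi-symmetry of $d_\alpha$ is an acceptable alternative route).
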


The proof of this result relies on the following technical lemma, whose proof we outsource to Subsection \ref{subsec:proofauxest}.

\begin{lemma}\label{lem:auxest}
Let $\alpha\in[0,1]$, let $N>2$, and let $\mu=(s_\mu,\theta_\mu,x_\mu)\in\mathbb{P}$ be an arbitrary fixed point of the parameter space $\mathbb{P}$.
\begin{enumerate}
\item[(i)] For $(\Lambda^c,\Phi^c)$ being an $\alpha$-curvelet parametrization, there exists a constant $C>0$ independent of $\mu$ and $s_\lambda$ such that
\[
\sum_{\substack{\lambda\in\Lambda^c\\ s_\lambda \text{ fixed}}} (1+d_\alpha(\lambda,\mu))^{-N} \le C\cdot \max\Big\{\frac{s_\lambda}{s_\mu},1\Big\}^2.
\]
\item[(ii)] For $(\Lambda^s,\Phi^s)$ being an $\alpha$-shearlet parametrization, there exists a constant $C>0$ independent of $\mu$ and $s_\lambda$ such that
\[
\sum_{\substack{\lambda\in\Lambda^s \\ s_\lambda\text{ fixed}}} (1+d_\alpha(\lambda,\mu))^{-N} \le C\cdot \max\Big\{\frac{s_\lambda}{s_\mu},1\Big\}^2.
\]
\end{enumerate}
\end{lemma}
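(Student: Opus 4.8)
The plan is to prove both statements (i) and (ii) by directly estimating the sum over all $\lambda$ with a fixed scale $s_\lambda = \sigma^j$, splitting the index distance $d_\alpha(\lambda,\mu)$ into its three constituent contributions and summing them in a nested fashion: first over translations $k$ (for fixed scale and orientation), then over orientations $\ell$ (for fixed scale). Throughout, I will write $s_0 = \min(s_\lambda,s_\mu)$, and it will be convenient to treat the two regimes $s_\lambda \le s_\mu$ and $s_\lambda > s_\mu$ separately; in the first regime $s_0 = s_\lambda$ and the right-hand side is $C$, while in the second $s_0 = s_\mu$ and the right-hand side is $C (s_\lambda/s_\mu)^2$, the extra factor precisely accounting for the increasing density of translation points at fine scales.

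First I would handle the translation sum. Fixing $j$ (hence $s_\lambda$) and $\ell$ (hence $\theta_\lambda$), the location points $x_\lambda$ form an affine image of a lattice: $x_\lambda = \tau R_{\theta_\lambda}^{-1} A_{\alpha,\sigma^j}^{-1} k$ in the curvelet case, and $x_\lambda = (S_{\ell,j}^\varepsilon)^{-1} A_{\alpha,\sigma^{-j}}^\varepsilon k$ in the shearlet case, with $k$ ranging over $\ZZ^2$. The quadratic form
\[
q(x) := s_0^{2\alpha}|x|^2 + \frac{s_0^2}{1+s_0^{2(1-\alpha)}|\theta_\lambda-\theta_\mu|^2}|\langle e_\lambda, x\rangle|^2
\]
appearing inside $d_\alpha$ is positive definite, with the smaller eigenvalue comparable to $s_0^{2\alpha}$ (the co-direction is long and thin) and the larger comparable to $\min\{s_0^2, s_0^{2\alpha}(1+s_0^{2(1-\alpha)}|\theta_\lambda-\theta_\mu|^2)\}$. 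Hence $\sum_k (1+q(x_\lambda - x_\mu))^{-N}$ is, by comparing the sum to an integral and using $N>1$, bounded by a constant times the reciprocal of the covolume of the lattice $\{x_\lambda\}$ measured in the metric $q$. A direct computation of $\det(A_{\alpha,\sigma^j})^{-1} = \sigma^{-j(1+\alpha)}$ (and the analogous determinant in the shearlet case, the shear being unimodular) against the eigenvalues of $q$ yields a bound of the form $C\max\{s_\lambda/s_\mu,1\}^2$ uniformly in $\ell$ and $\mu$; crucially, the anisotropy of the lattice and the anisotropy of $q$ are matched, so the $\theta$-dependence cancels in this step. I would verify carefully here that the angular deviation $|\theta_\lambda-\theta_\mu|$ — which in the shearlet case enters through $\theta_\lambda = \varepsilon\pi/2 + \arctan(-\ell\eta_j)$ — does not spoil this matching, using that $\arctan$ is bi-Lipschitz on bounded intervals and $\eta_j \asymp \sigma^{-j(1-\alpha)}$.

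Next I would perform the orientation sum. After the translation step we are left with $\sum_{\ell} (1 + s_0^{2(1-\alpha)}|\theta_\lambda - \theta_\mu|^2)^{-N'}$ for a suitable $N' = N - 1 > 1$ (one power of $N$ having been consumed, or rather a fraction of it — I would keep enough room by splitting $(1+d_\alpha)^{-N} \le (1+s_0^{2(1-\alpha)}|\theta_\lambda-\theta_\mu|^2)^{-N/3}(1+q)^{-N/3}$ from the outset and using $N/3 > 2/3$, adjusting constants so that each sub-sum converges). Since consecutive admissible orientations $\theta_\lambda$ are spaced $\asymp \eta_j \asymp \sigma^{-j(1-\alpha)} \le s_0^{-(1-\alpha)}$ apart in the regime $s_\lambda \le s_\mu$ (and more widely spaced, hence easier, when $s_\lambda > s_\mu$), the variable $t = s_0^{1-\alpha}(\theta_\lambda - \theta_\mu)$ ranges over a set of points with spacing $\gtrsim 1$ when $s_\lambda \ge s_\mu$ and spacing $\asymp s_\mu^{1-\alpha}/s_\lambda^{1-\alpha}$ when $s_\lambda < s_\mu$, so the sum is $\lesssim 1$ in the first case and $\lesssim \max\{s_\lambda/s_\mu,1\}^{1-\alpha} \le \max\{s_\lambda/s_\mu,1\}^2$ in the second — which, multiplied by the translation bound, still gives the asserted $\max\{s_\lambda/s_\mu,1\}^2$ after checking the total power of $s_\lambda/s_\mu$ does not exceed $2$. (A cleaner route is to note $L_j \lesssim \sigma^{j(1-\alpha)}$ so the number of orientations at a fixed scale is already controlled, and to put all the $s_\lambda/s_\mu$ loss into the translation step.)

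The main obstacle I anticipate is the shearlet case (ii): there the orientation parametrization is through $\arctan(-\ell\eta_j)$ rather than a uniform angular grid, the location points involve the shear $(S_{\ell,j}^\varepsilon)^{-1}$ which is \emph{not} orthogonal, and the two cones $\varepsilon\in\{0,1\}$ must be handled (but the second cone is just a rotation by $\pi/2$ of the first, so reduces to it, using that $\mathbb{T}$ identifies $\pm\pi/2$). The delicate point is that the shear mixes the two coordinate directions, so that the image lattice $\{(S_{\ell,j}^\varepsilon)^{-1}A^\varepsilon_{\alpha,\sigma^{-j}}k : k\in\ZZ^2\}$ is sheared relative to the co-direction frame of $e_\lambda$; one must check that $e_\lambda = R_{-\theta_\lambda}e_1$ with $\theta_\lambda = \arctan(-\ell\eta_j)$ is, up to a bounded factor, aligned with the \emph{short} axis of this sheared lattice — equivalently, that the shearing direction $e_1$ and the sheared short direction differ by an angle $\lesssim \eta_j$, which is exactly the angular resolution. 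This is a computation with $2\times 2$ matrices that I would carry out explicitly, after which the covolume/eigenvalue argument of step one applies verbatim. I would then combine the per-scale bounds in Theorem \ref{thm:consist} by summing the geometric-type series over scales $j$, where the factor $\max\{s_\lambda/s_\mu,1\}^{-k}\cdot\max\{s_\lambda/s_\mu,1\}^2 = \max\{s_\lambda/s_\mu,1\}^{2-k}$ is summable precisely because $k>2$.
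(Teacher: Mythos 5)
Your overall architecture (sum over $k$ first, then over $\ell$, treating $s_\lambda\le s_\mu$ and $s_\lambda>s_\mu$ separately, and handling the shear via the transfer matrix and the bi-Lipschitz property of $\arctan$ on bounded sets) is sound and close in spirit to the paper's. However, the central step of your translation sum contains a genuine error. You claim that $\sum_k (1+q(x_\lambda-x_\mu))^{-N}$ is bounded by $C\max\{s_\lambda/s_\mu,1\}^2$ \emph{uniformly in $\ell$} because ``the anisotropy of the lattice and the anisotropy of $q$ are matched, so the $\theta$-dependence cancels.'' This is false: the lattice $\{\tau R_{\theta_\lambda}^{-1}A_{\alpha,\sigma^j}^{-1}k\}$ always has spacings $\tau s_\lambda^{-1}\times\tau s_\lambda^{-\alpha}$ in the co-direction frame, whereas the eigenvalue of $q$ in the co-direction is $s_0^{2\alpha}+s_0^{2}\bigl(1+s_0^{2(1-\alpha)}|\delta\theta|^2\bigr)^{-1}$, which collapses from $\asymp s_0^{2}$ to $\asymp s_0^{2\alpha}$ as $|\delta\theta|$ grows past $s_0^{-(1-\alpha)}$. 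A covolume computation then gives a translation bound of order $(s_\lambda/s_0)^{2\alpha}\,s_\lambda^{1-\alpha}$ for $|\delta\theta|\asymp 1$ — unbounded in $j$ for $\alpha<1$. This excess must be absorbed by the angular decay in the subsequent $\ell$-sum, a coupling your argument does not track. Relatedly, your proposed splitting $(1+d_\alpha)^{-N}\le(1+s_0^{2(1-\alpha)}|\delta\theta|^2)^{-N/3}(1+q)^{-N/3}$ allocates only $N/3$ to the two-dimensional quadratic form, so the $k$-sum converges only for $N>3$, overshooting the hypothesis $N>2$.

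The paper sidesteps both problems with one move you are missing: by the AM--GM inequality,
\begin{equation*}
\bigl(1+s_0^{2(1-\alpha)}|\delta\theta|^2\bigr)+\frac{s_0^{2}}{1+s_0^{2(1-\alpha)}|\delta\theta|^2}\,|\langle e_\lambda,\delta x\rangle|^2\;\ge\;2\,s_0\,|\langle e_\lambda,\delta x\rangle|,
\end{equation*}
so half of each of these two terms can be traded for the $\theta$-independent \emph{linear} term $s_0|\langle e_\lambda,\delta x\rangle|$ while retaining the full angular term. After rescaling, the resulting sum is a Riemann sum for $\int\int\int\bigl(1+|y|^2+|x_2|^2+|x_1|\bigr)^{-N}\,dy\,dx$, which converges precisely for $N>2$ (quadratic decay in two variables, first-power decay in the co-direction variable), and the product of the three grid spacings $(s_\lambda/s_0)^{-(1-\alpha)}(s_\lambda/s_0)^{-\alpha}(s_\lambda/s_0)^{-1}$ yields exactly the factor $\max\{s_\lambda/s_\mu,1\}^2$. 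To repair your proof you would either need to incorporate this decoupling step or carry the $\ell$-dependent loss from the translation sum explicitly into the orientation sum with a power allocation summing to at most $N$; as written, the argument does not close for $2<N\le 3$ and the uniformity claim on which the orientation step rests is incorrect.
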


Since the main technical difficulties are contained in the proof of this lemma, the actual proof of Theorem \ref{thm:consist}
now just takes a few lines.

\begin{proof}[Proof of Theorem \ref{thm:consist}]
We aim to prove that
\[
\sup_{\mu\in \Delta}\sum_{\lambda\in\Lambda} \omega_{\alpha}\left(\mu,\lambda\right)^{-k} <\infty.
\]
By the definition of $\omega_{\alpha}\left(\mu,\lambda\right)$, for every $\mu\in\Delta$, we need to consider
\begin{equation}\label{eq:prove2}
\sum_{j\in\N_0}\sum_{\substack{\lambda \in \Lambda \\ s_\lambda  = \sigma^j}}\max\Big\{ \frac{s_\lambda}{s_\mu},\frac{s_\mu}{s_\lambda} \Big\}^{-k}\left(1 + d_\alpha(\mu,\lambda)\right)^{-k}.
\end{equation}
According to Lemma \ref{lem:auxest}, for each fixed $j\in\N_0$ and $k>2$,
\[
\sum_{\lambda \in \Lambda, s_\lambda  = \sigma^j}(1 + d_\alpha(\mu,\lambda))^{-k}\lesssim\max\Big\{\frac{s_\lambda}{s_\mu},1\Big\}^2.
\]
Let now $j^\prime\in\N_0$ be such that $s_{\mu} = \sigma^{j'}$. Then \eqref{eq:prove2} can be estimated by
\[
\sum_{j\in\N_0}  \max\Big\{\frac{s_\lambda}{s_\mu},1\Big\}^2 \max\Big\{\frac{s_\lambda}{s_\mu},\frac{s_\mu}{s_\lambda}\Big\}^{-k}
\hspace*{-0.2cm}\le \sum_{j\in\N_0}  \max\Big\{\frac{s_\lambda}{s_\mu},\frac{s_\mu}{s_\lambda}\Big\}^{2-k}
\hspace*{-0.2cm}= \sum_{j\in\N_0} \sigma^{|j-j'|(2-k)} \le 2\sum_{j\in\N_0} \sigma^{j(2-k)} = C <\infty,
\]
where $C$ is independent of $j^\prime$, and thus of $\mu$. This finishes the proof.
\end{proof}

This now allows us to actually derive novel results by a simple transfer using Theorem \ref{theo:sparseeqivmol}
\pg{and Proposition \ref{prop:justification}}. \ms{In fact, we will demonstrate how to derive the much more general Theorems~\ref{theo:mainsparsity2} and \ref{theo:mainsparsity}
from one particular result, namely Theorem~\ref{thm:acurveapprox}, by using the machinery developed here.}
As we shall see below in Subsection \ref{subsec:cartoon}, this will lead to a number
of novel results concerning best $N$-term approximations for cartoon-like images, also defined at this point.

%-----------------------------------------------------
\subsection{Sparse Approximation of Cartoon-like Functions}\label{subsec:cartoon}
%-----------------------------------------------------

We finally show how the framework of $\alpha$-molecules allows to prove approximation results in a more
systematic way. It provides, for instance, an explanation for similar approximation rates observed for
different systems. From the viewpoint of $\alpha$-molecules this is a natural consequence of the time-frequency
localization of the systems.

\pagebreak

The general strategy is as follows. If an approximation result of a specific system of $\alpha$-molecules is known
-- in the sequel $\alpha$-curvelets and the class of cartoon-like functions are considered --,  and it can be shown
that a class of $\alpha$-molecules with certain conditions on the control parameters (the parametrization and the
order) satisfies the hypotheses of Theorem \ref{theo:sparseeqivmol},  i.e., they are all sparsity equivalent to this
specific system, they automatically inherit its known approximation behavior.

To present one application of this general concept, we start by introducing the model situation we will consider,
followed by recalling the known sparse approximation result we aim to transfer. Finally, we will obtain novel
stand-alone sparse \msch{approximation results} for a class of $\alpha$-molecules with sufficiently large order and
certain consistency conditions on their parametrization.

%-----------------------------------------------------
\subsubsection{Model Situation}
%-----------------------------------------------------

The general continuum model for image data is the space $L^2(\R^2)$. However, for real-life images like photos for
example, such a general model is usually not needed and seems to be a too broad approach. Based on the observation
that natural images typically consist of piecewise smooth patches -- and taking into account that the neurons in
the visual cortex are highly directional sensitive, thereby making anisotropic features always predominant -- it can be
further refined, giving rise to the class of so-called \emph{cartoon-like functions}.

The first such model $\cE^{2}(\RR^2)$ was introduced in \cite{Don01}. It postulates that natural images consist of
$C^2$-regions separated by piecewise smooth $C^2$-curves. Since then several extensions of the original model have
been made and studied, starting with the work in \cite{Kutyniok2012}. By now cartoon-like functions have been
established as a widely used standard model, in particular for natural images.

In the sequel, we consider an extension of the original model, first considered in \cite{Kutyniok2012}, which are
images consisting of two smooth $C^\beta$-regions, $\beta\in(1,2]$, separated by a piecewise smooth $C^\beta$-curve.
The formal definition is as follows.

\begin{definition}
For $\beta\in(1,2]$, the model class $\cE^{\beta}(\RR^2)$ of {\em cartoon-like functions} is given by
\[
\cE^{\beta}(\RR^2) = \{f \in L^2(\RR^2) : f = f_0 + f_1 \cdot \chi_{B}\},
\]
where $f_0,f_1 \in C_0^{\beta}([0,1]^2)$ and $B \subset [0,1]^2$ is a Jordan domain with a regular closed piecewise
smooth $C^{\beta}$-curve as boundary.
\end{definition}

Beginning with \cite{Don01} it was established in a series of papers \cite{Kutyniok2012,Kei13,GKKScurve2014}, that the
optimally achievable decay rate of the $N$-term approximation error for $f \in \cE^{\beta}(\RR^2)$ with $\beta\in(1,2]$,
in any dictionary under the natural assumption of polynomial depth search, is
\[
\norm{f - f_N}_2^2 \asymp N^{-\beta}, \quad \mbox{as } N \to \infty.
\]
Furthermore, it was proven in \cite{CD04,GKKScurve2014,GL07,Kei13,Kutyniok2010} that $\alpha$-curvelet and
$\alpha$-shearlet systems attain this rate up to a log-factor, provided that $\alpha=\beta^{-1}$. Thus,
these systems behave similarly concerning their sparse approximation properties, and the framework of
$\alpha$-molecules will not only provide us with an explanation, but also enable us to derive similar results
for a much wider class of multiscale systems.

%-----------------------------------------------------
\subsubsection{Sparse Approximation with $\alpha$-Curvelets}
%-----------------------------------------------------

Next, we require a concrete system of $\alpha$-molecules, which establishes the optimal $N$-term approximation rate with
respect to the class $\mathcal{E}^\beta(\R^2)$.

A suitable choice for the reference system is the tight frame of $\alpha$-curvelets $C_\alpha(W^{(0)},W,V)$ given by
Definition \ref{defi:curvelets}. By Proposition~\ref{prop:acurvmol}, it constitutes a system of $\alpha$-molecules of
order $(\infty,\infty,\infty,\infty)$. Moreover, it was shown in \cite{GKKScurve2014} that it provides (up to a $\log$-factor)
optimal $N$-term approximation for the class of cartoon-like functions $\mathcal{E}^\beta(\R^2)$ for $\beta=\alpha^{-1}$.

\begin{theorem}[\cite{GKKScurve2014}]\label{thm:acurveapprox}
Let $\alpha\in[\frac{1}{2},1)$ and $\beta=\alpha^{-1}$. The tight frame of $\alpha$-curvelets $C_\alpha(W^{(0)},W,V)$
provides almost optimal sparse approximations for cartoon-like functions in $\cE^{\beta}(\RR^2)$. More precisely,
there exists some constant $C>0$ such that for every $f\in\mathcal{E}^\beta(\R^2)$
\[
\|f-f_N\|_2^2\le CN^{-\beta} \cdot \left(log_2 N\right)^{\beta+1} \quad \mbox{as }  N \rightarrow \infty.
\]
where $f_N$ denotes the $N$-term approximation of $f$ obtained by choosing the $N$ largest coefficients.
\end{theorem}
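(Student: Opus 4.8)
The plan is to run the classical curvelet sparsity argument of Cand\`es and Donoho, but with the parabolic scaling replaced by the $\alpha$-scaling $A_{\alpha,2^{j}}=\diag(2^{j},2^{j\alpha})$, which is tuned to the $C^\beta$-regularity of the discontinuity curve precisely through $\alpha\beta=1$. Since $C_\alpha(W^{(0)},W,V)$ is a tight frame consisting of $\alpha$-molecules of order $(\infty,\infty,\infty,\infty)$ (Proposition \ref{prop:acurvmol}), the $N$-term approximation obtained by keeping the $N$ largest frame coefficients satisfies $\norm{f-f_N}_2^2\lesssim\sum_{n>N}|c_n^\ast|^2$, where $(c_n^\ast)_n$ is the non-increasing rearrangement of $\left(\langle f,\psi_\lambda\rangle\right)_\lambda$. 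Hence it suffices to control the decay of $(c_n^\ast)_n$, and by (a log-refined form of) Lemma \ref{lem:decayapprox} it is enough to show, up to a factor logarithmic in $\varepsilon$, that $\#\{\lambda: |\langle f,\psi_\lambda\rangle|>\varepsilon\}\lesssim \varepsilon^{-2/(\beta+1)}$. I would then split $f=f_0+f_1\cdot\chi_B$ and first discard everything away from the edge: the coefficients of the globally $C_0^\beta$ part $f_0$, and those of $f_1\chi_B$ against curvelets whose essential spatial support (a box of size $\asymp 2^{-j}\times 2^{-j\alpha}$ rotated by $\theta_\lambda$) stays at distance $\gtrsim 2^{-j\alpha}$ from $\partial B$, decay faster than any power of $2^{j}$. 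This is a routine consequence of the smoothness of $f_0,f_1$ together with the infinitely many directional vanishing moments and the Schwartz-type frequency localization built into the order $(\infty,\infty,\infty,\infty)$, via repeated integration by parts; such coefficients therefore contribute only a harmless lower-order term.

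The core of the argument is the per-scale edge estimate. Using a smooth partition of unity I would localize $\partial B$ into $\asymp 2^{j\alpha}$ arcs of length $\asymp 2^{-j\alpha}$. The decisive point is the heuristic already recalled in the introduction: since $\partial B\in C^\beta$ and $\alpha\beta=1$, a Taylor expansion shows that on each arc the curve deviates from its tangent line by only $\lesssim (2^{-j\alpha})^\beta=2^{-j}$, i.e.\ exactly the resolution of a curvelet in its short, oscillatory direction (``width $\approx$ length$^\beta$''). On each patch I would compare $f_1\chi_B$ with $f_1(x_0)\chi_{H}$ for a boundary point $x_0$ and the osculating half-plane $H$; the half-plane part is estimated by Plancherel, using that $\widehat{\chi_H}$ is concentrated on the normal line and decays like $|\xi|^{-1}$, while $\widehat{\psi_\lambda}$ satisfies $\norm{\widehat{\psi_\lambda}}_\infty\asymp 2^{-j(1+\alpha)/2}$ on a wedge of volume $\asymp 2^{j(1+\alpha)}$, and the remainder terms (the curvature of $\partial B$, and the oscillation of $f_1-f_1(x_0)$) pick up additional negative powers of $2^{j}$ and are absorbed. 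This yields, for the best-aligned curvelet at scale $j$ and a given location, $|\langle f,\psi_\lambda\rangle|\lesssim 2^{-j(1+\alpha)/2}$, and for the remaining orientations and positions the refined bound $|\langle f,\psi_\lambda\rangle|\lesssim 2^{-j(1+\alpha)/2}\langle 2^{j(1-\alpha)}|\theta_\lambda-\theta_0|\rangle^{-P}\langle 2^{j\alpha}\,\mathrm{dist}(x_\lambda,\partial B)\rangle^{-P}$ for every $P$, again by integrating by parts against the smooth, highly localized curvelet. Consequently only $\asymp 2^{j\alpha}$ curvelets per scale carry a coefficient of size $\gtrsim 2^{-j(1+\alpha)/2}$, and the rest are negligible after summation over orientations and translates.

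Assembling the estimates, keeping all scales $j'\le j$ amounts to retaining $N\asymp\sum_{j'\le j}2^{j'\alpha}\asymp 2^{j\alpha}$ coefficients whose smallest is $\asymp 2^{-j(1+\alpha)/2}$, so $c_n^\ast\lesssim n^{-(1+\alpha)/(2\alpha)}=n^{-(\beta+1)/2}$ up to logarithmic corrections, and hence $\norm{f-f_N}_2^2\lesssim\sum_{n>N}n^{-(\beta+1)}\lesssim N^{-\beta}$. The extra factor $(\log_2 N)^{\beta+1}$ in the statement is the price of the honest bookkeeping: there are $\asymp\log N$ scales interleaving near the truncation level $n\asymp N$, and one must allow an at most logarithmic slack in the alignment estimates above. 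I expect the genuine obstacle to be exactly the edge estimate of the second paragraph — bounding $\langle f_1\chi_B,\psi_\lambda\rangle$ in the presence of a truly curved $C^\beta$-boundary, where the matching $\alpha=\beta^{-1}$ between anisotropy and regularity is precisely what renders the curvature contribution harmless; the remaining ingredients are either immediate from the tight-frame property or from the Schwartz-class localization of the $\alpha$-curvelets.
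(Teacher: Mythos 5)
This theorem is not proved in the paper at all: it is imported verbatim from \cite{GKKScurve2014}, so there is no internal proof to compare against. Measured against the actual argument in that reference, your outline is the right one — it is the Cand\`es--Donoho curvelet sparsity scheme transplanted to $\alpha$-scaling, and your exponent bookkeeping is correct: $N\asymp 2^{j\alpha}$ retained coefficients of size $\asymp 2^{-j(1+\alpha)/2}$ gives $c_n^\ast\lesssim n^{-(1+\alpha)/(2\alpha)}=n^{-(\beta+1)/2}$, hence weak-$\ell^{p}$ membership for $p>\frac{2}{1+\beta}$ and the rate $N^{-\beta}$ after summation, with the logarithmic factor arising from the $\asymp\log N$ scales that contribute comparably near the truncation threshold. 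The reduction to a coefficient-decay estimate via the tight-frame property and Lemma \ref{lem:decayapprox}, the dismissal of the smooth part and of curvelets far from $\partial B$ by the order $(\infty,\infty,\infty,\infty)$ of the system (Proposition \ref{prop:acurvmol}), and the tangent-line comparison exploiting $\alpha\beta=1$ are all faithful to the strategy of \cite{GKKScurve2014}.

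That said, as a proof the proposal has a genuine gap exactly where you locate it, and it is not a small one: the per-scale edge estimate is the entire content of the cited paper. What is needed there is not merely the pointwise bound $|\langle f,\psi_\lambda\rangle|\lesssim 2^{-j(1+\alpha)/2}$ for aligned curvelets together with polynomial decay in angle and distance, but a \emph{uniform-in-$j$} bound on the weak-$\ell^{2/(\beta+1)}$ quasi-norm of the edge coefficients at scale $j$; your Plancherel comparison with an osculating half-plane plus ``the curvature picks up extra negative powers'' does not yet deliver this, because for $\beta<2$ the deviation of a $C^\beta$ curve from its tangent is only $O(\ell^\beta)$ with no control on a second derivative, so the ``remainder is absorbed'' step has to be argued via the H\"older seminorm rather than a genuine curvature term. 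Two further omissions: the boundary in $\cE^{\beta}(\RR^2)$ is only \emph{piecewise} $C^\beta$, so the finitely many corner points need a separate (point-singularity-type) count, which your localization into tangent-line patches does not cover; and the passage from ``only $\asymp 2^{j\alpha}$ curvelets carry a large coefficient'' to the summability over the non-aligned orientations and translates requires summing the claimed $\langle 2^{j(1-\alpha)}|\theta_\lambda-\theta_0|\rangle^{-P}\langle 2^{j\alpha}\,\mathrm{dist}(x_\lambda,\partial B)\rangle^{-P}$ tails raised to the power $2/(\beta+1)<1$, which must be checked to converge uniformly in $j$. None of these steps is implausible — they are all carried out in \cite{GKKScurve2014} — but in the proposal they are asserted rather than proved.
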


More precisely, it was proved in \cite{GKKScurve2014} that the curvelet coefficients belong to $\omega\ell^{p}(\Lambda^c)$
for every $p>\frac{2}{1+\beta}$, $\Lambda^c$ being the \msch{curvelet index set}.

We mention that this type of optimal sparse approximation focuses on the cases of $\alpha\in[\frac{1}{2},1)$. Certainly,
once approximation results are established for a reference system for some $\alpha \in [0,\frac12)$, the general machinery can
be applied as well.

%-----------------------------------------------------
\subsubsection{Optimality Result}
%-----------------------------------------------------

Via Theorem~\ref{theo:sparseeqivmol} and the notion of sparsity equivalence, it is now possible to transfer the approximation
rate established in Theorem~\ref{thm:acurveapprox} to more general systems of $\alpha$-molecules. For this, let $(\Lambda^c,\Phi^c)$
denote the parametrization of the tight frame of $\alpha$-curvelets $C_\alpha(W^{(0)},W,V)$.

Finally, we can formulate and prove our main result concerning the approximation properties of $\alpha$-molecules, which
identifies a large class of multiscale systems with (almost) optimal approximation performance for the class of cartoon-like
functions $\cE^{\beta}(\RR^2)$. By Theorem~\ref{thm:consist}, the required condition (i) holds in particular for the
curvelet and shearlet parametrizations, for $k>2$. Thus, this result allows a simple and systematic derivation not only of the
results in \cite{CD04,GKKScurve2014,GL07,Kei13,Kutyniok2010}, but for a much larger class of $\alpha$-molecules.

\begin{theorem}\label{theo:mainsparsity2}
Let $\alpha\in[\frac{1}{2},1)$ and $\beta=\alpha^{-1}$. Assume that, for some $k>0$, a tight frame $(m_\lambda)_{\lambda\in \Lambda}$ of
$\alpha$-molecules satisfies the following two conditions:
\begin{itemize}
\item[(i)] its parametrization $(\Lambda,\Phi_\Lambda)$ and $(\Lambda^c,\Phi^c)$ are ($\alpha,k$)-consistent,
\item[(ii)] its order $(L,M,N_1,N_2)$ satisfies
\[
L\geq k(1+\beta) ,\quad  M \geq \frac{3k}{2} (1+\beta) + \frac{\alpha-3}{2} , \quad N_1 \geq \frac{k}{2} (1+\beta) +\frac{1+\alpha}{2} , \quad \mbox{and} \quad N_2\geq k(1+\beta).
\]
\end{itemize}
Then $(m_\lambda)_{\lambda\in \Lambda}$ possesses an almost optimal $N$-term approximation rate for the class
of cartoon-like functions $\cE^{\beta}(\RR^2)$, i.e., for all $f\in\mathcal{E}^\beta(\R^2)$,
\[
\|f-f_N\|_2^2 \lesssim N^{-\beta+\varepsilon}, \quad\varepsilon >0 \text{ arbitrary},
\]
where $f_N$ denotes the $N$-term approximation obtained from the $N$ largest frame coefficients.
\end{theorem}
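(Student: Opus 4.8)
The plan is to transfer the almost-optimal sparse approximation rate of the tight frame of $\alpha$-curvelets $C_\alpha(W^{(0)},W,V)$, established in Theorem~\ref{thm:acurveapprox}, to the given frame $(m_\lambda)_{\lambda\in\Lambda}$ by means of sparsity equivalence. Fix $\varepsilon>0$. Since $\beta=\alpha^{-1}\in(1,2]$ we have $\frac{2}{1+\beta}\in[\frac23,1)$, so we may choose an exponent $q$ with $\frac{2}{1+\beta}<q<1$ that is so close to $\frac{2}{1+\beta}$ that $\frac{2}{q}-1\ge\beta-\varepsilon$; everything below is carried out for this $q$. \emph{First}, I would record the sparsity of the reference system: by the refined version of Theorem~\ref{thm:acurveapprox} proved in \cite{GKKScurve2014}, for every $f\in\cE^\beta(\R^2)$ the $\alpha$-curvelet coefficients $(\langle f,\psi_\nu\rangle)_{\nu\in\Lambda^c}$ belong to $\omega\ell^{p_0}(\Lambda^c)$ for every $p_0>\frac{2}{1+\beta}$. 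Choosing $\frac{2}{1+\beta}<p_0<q$ and using the embedding $\omega\ell^{p_0}\hookrightarrow\ell^q$ (immediate from $|c^\ast_n|\lesssim n^{-1/p_0}$ together with $q/p_0>1$), we get $(\langle f,\psi_\nu\rangle)_\nu\in\ell^q(\Lambda^c)$. Since $C_\alpha(W^{(0)},W,V)$ is a tight frame, its canonical dual frame is a scalar multiple of itself, so the coefficients of $f$ in the dual expansion are, up to a constant, the frame coefficients, and hence also lie in $\ell^q(\Lambda^c)$.

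\emph{Second}, I would establish sparsity equivalence of $(m_\lambda)_{\lambda\in\Lambda}$ and $C_\alpha(W^{(0)},W,V)$ in $\ell^q$ via Theorem~\ref{theo:sparseeqivmol}. By Corollary~\ref{coro:curveletmol}, $C_\alpha(W^{(0)},W,V)$ is a system of $\alpha$-molecules of order $(\infty,\infty,\infty,\infty)$ with respect to $(\Lambda^c,\Phi^c)$, hence in particular of order $(L,M,N_1,N_2)$; by hypothesis~(i) the parametrizations $(\Lambda,\Phi_\Lambda)$ and $(\Lambda^c,\Phi^c)$ are $(\alpha,k)$-consistent. Since $q>\frac{2}{1+\beta}$ is equivalent to $\frac1q<\frac{1+\beta}{2}$, hypothesis~(ii) yields $L\ge k(1+\beta)>\frac{2k}{q}$, $M\ge\frac{3k}{2}(1+\beta)+\frac{\alpha-3}{2}>\frac{3k}{q}-\frac{3-\alpha}{2}$, $N_1\ge\frac{k}{2}(1+\beta)+\frac{1+\alpha}{2}>\frac{k}{q}+\frac{1+\alpha}{2}$, and $N_2\ge k(1+\beta)>\frac{2k}{q}$, which are exactly the hypotheses of Theorem~\ref{theo:sparseeqivmol} with its parameter set to $q$. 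Hence $\|(\langle\psi_\nu,m_\lambda\rangle)_{\nu\in\Lambda^c,\lambda\in\Lambda}\|_{\ell^q\to\ell^q}<\infty$, i.e. the two frames are sparsity equivalent in $\ell^q$.

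\emph{Third}, I would apply Proposition~\ref{prop:justification} with $C_\alpha(W^{(0)},W,V)$ in the role of $(m_\lambda)$ and $(m_\lambda)_{\lambda\in\Lambda}$ in the role of $(p_\mu)$: combining the previous two steps gives $(\langle f,m_\lambda\rangle)_{\lambda\in\Lambda}\in\ell^q(\Lambda)$. Since $(m_\lambda)_{\lambda\in\Lambda}$ is a tight frame with frame bound $A$, we have $f=A^{-1}\sum_{\lambda\in\Lambda}\langle f,m_\lambda\rangle m_\lambda$, so the coefficients of this expansion lie in $\ell^q\subset\omega\ell^q$. Lemma~\ref{lem:decayapprox} applied with $p:=\frac{2}{q}-1>0$ (note $q<2$ and $\frac{2}{p+1}=q$) then gives $\|f-f_N\|_2^2\lesssim N^{-p}\le N^{-(\beta-\varepsilon)}=N^{-\beta+\varepsilon}$, where $f_N$ is the $N$-term approximation obtained by keeping the $N$ largest frame coefficients. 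As $\varepsilon>0$ was arbitrary, the claim follows.

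I expect no genuinely hard analysis to remain: the deep inputs — the cross-Gramian estimate Theorem~\ref{thm:almostorth} underlying Theorem~\ref{theo:sparseeqivmol}, and the curvelet approximation Theorem~\ref{thm:acurveapprox} — are already in hand. The main point requiring care is the exponent bookkeeping: the order bounds in hypothesis~(ii), namely $k(1+\beta)$, $\frac{3k}{2}(1+\beta)+\frac{\alpha-3}{2}$, $\frac{k}{2}(1+\beta)+\frac{1+\alpha}{2}$, $k(1+\beta)$, coincide with the quantities $\frac{2k}{q}$, $\frac{3k}{q}-\frac{3-\alpha}{2}$, $\frac{k}{q}+\frac{1+\alpha}{2}$, $\frac{2k}{q}$ precisely at the endpoint $q=\frac{2}{1+\beta}$, and one must check that they remain sufficient for every admissible $q>\frac{2}{1+\beta}$ while the resulting rate exponent $\frac{2}{q}-1$ sweeps up to $\beta$. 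The only other mild subtlety is to keep the implicit constants uniform over the class $\cE^\beta(\R^2)$, which is automatic once the constant in the curvelet $\omega\ell^{p_0}$ bound is.
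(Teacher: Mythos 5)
Your proposal is correct and follows essentially the same route as the paper's own proof: transfer the $\omega\ell^{p}$-decay of the $\alpha$-curvelet coefficients from Theorem~\ref{thm:acurveapprox} through the cross-Gramian via Theorem~\ref{theo:sparseeqivmol} (whose hypotheses your exponent bookkeeping correctly verifies for every $p>\frac{2}{1+\beta}$), and then conclude with Lemma~\ref{lem:decayapprox}. The only cosmetic difference is that the paper proves the general dual-frame version (Theorem~\ref{theo:mainsparsity}) and obtains the tight-frame statement as a special case, whereas you argue the tight-frame case directly.
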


This result can also be extended to general frames, which then provides this approximation behavior for any associated
dual frame. Certainly, it suffices to prove
only this theorem, which includes Theorem \ref{theo:mainsparsity2} as a special case.

\begin{theorem}\label{theo:mainsparsity}
Let $\alpha\in[\frac{1}{2},1)$ and $\beta=\alpha^{-1}$. Assume that, for some $k>0$, a frame $(m_\lambda)_{\lambda\in \Lambda}$ of
$\alpha$-molecules satisfies the following two conditions:
\begin{itemize}
\item[(i)] its parametrization $(\Lambda,\Phi_\Lambda)$ and $(\Lambda^c,\Phi^c)$ are ($\alpha,k$)-consistent,
\item[(ii)] its order $(L,M,N_1,N_2)$ satisfies
\[
L\geq k(1+\beta) ,\quad  M \geq \frac{3k}{2} (1+\beta) + \frac{\alpha-3}{2} , \quad N_1 \geq \frac{k}{2} (1+\beta) +\frac{1+\alpha}{2} , \quad \mbox{and} \quad N_2\geq k(1+\beta).
\]
\end{itemize}
Then each dual frame $(\tilde{m}_\lambda)_{\lambda\in \Lambda}$ possesses an almost optimal $N$-term approximation rate for the class
of cartoon-like functions $\cE^{\beta}(\RR^2)$, i.e., for all $f\in\mathcal{E}^\beta(\R^2)$,
\[
\|f-f_N\|_2^2 \lesssim N^{-\beta+\varepsilon}, \quad\varepsilon >0 \text{ arbitrary},
\]
where $f_N$ denotes the $N$-term approximation obtained from the $N$ largest frame coefficients.
\end{theorem}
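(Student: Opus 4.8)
The plan is to chain together four facts that are already available: the known coefficient decay of $\alpha$-curvelets on $\cE^{\beta}(\R^2)$ (Theorem~\ref{thm:acurveapprox} together with the sharpened statement recorded after it), the sparsity-equivalence criterion for $\alpha$-molecules (Theorem~\ref{theo:sparseeqivmol}), the transfer principle for dual frames (Proposition~\ref{prop:justification}), and the passage from coefficient decay to an $N$-term rate (Lemma~\ref{lem:decayapprox}). Concretely, fix $\varepsilon>0$; since $N^{-\beta+\varepsilon}$ is monotone in $\varepsilon$ and $\beta=\alpha^{-1}\in(1,2]$ for $\alpha\in[\tfrac12,1)$, we may assume $0<\varepsilon<\beta-1$. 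Set $p:=\beta-\varepsilon\in(1,\beta)$ and $q:=\tfrac{2}{p+1}$, and write $q_0:=\tfrac{2}{1+\beta}$, so that $q_0<q<1$. The whole argument then runs with the fixed exponent $q$.

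On the curvelet side, by Corollary~\ref{coro:curveletmol} the tight frame $C_\alpha(W^{(0)},W,V)$ is a system of $\alpha$-molecules of order $(\infty,\infty,\infty,\infty)$ with respect to $(\Lambda^c,\Phi^c)$, and by the sharpened form of Theorem~\ref{thm:acurveapprox} the curvelet coefficients $(\langle f,\psi_\mu\rangle)_{\mu\in\Lambda^c}$ of any $f\in\cE^{\beta}(\R^2)$ lie in $\omega\ell^{p'}(\Lambda^c)$ for every $p'>q_0$. Choosing $p'$ with $q_0<p'<q$ and using $\omega\ell^{p'}\hookrightarrow\ell^{q}$, we obtain $(\langle f,\psi_\mu\rangle)_\mu\in\ell^q(\Lambda^c)$. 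Since the curvelet frame is tight, the coefficients of the dual-frame expansion $f=\sum_\mu\langle f,\psi_\mu\rangle\tilde\psi_\mu$ coincide with these up to the frame constant, hence also lie in $\ell^q(\Lambda^c)$.

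Next we invoke sparsity equivalence. The parametrizations $(\Lambda,\Phi_\Lambda)$ and $(\Lambda^c,\Phi^c)$ are $(\alpha,k)$-consistent by hypothesis~(i). Because $q>q_0$ we have $k/q<\tfrac{k}{2}(1+\beta)$, and therefore the order hypotheses $L\ge k(1+\beta)$, $M\ge\tfrac{3k}{2}(1+\beta)+\tfrac{\alpha-3}{2}$, $N_1\ge\tfrac{k}{2}(1+\beta)+\tfrac{1+\alpha}{2}$, $N_2\ge k(1+\beta)$ imply the conditions $L\ge 2k/q$, $M>3k/q-\tfrac{3-\alpha}{2}$, $N_1\ge k/q+\tfrac{1+\alpha}{2}$, $N_2\ge 2k/q$ required by Theorem~\ref{theo:sparseeqivmol} at exponent $q$ — note that the strict inequality for $M$ is bought precisely from $q>q_0$. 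Thus Theorem~\ref{theo:sparseeqivmol} gives that $C_\alpha(W^{(0)},W,V)$ and $(m_\lambda)_{\lambda\in\Lambda}$ are sparsity equivalent in $\ell^q$, i.e. $\|(\langle\psi_\mu,m_\lambda\rangle)_{\mu,\lambda}\|_{\ell^q\to\ell^q}<\infty$. Applying Proposition~\ref{prop:justification} with $(\psi_\mu)$ in the role of the first frame (its own, up to a constant, dual), $(m_\lambda)$ in the role of the second, and exponent $q$, the two facts just established yield $(\langle f,m_\lambda\rangle)_{\lambda\in\Lambda}\in\ell^q(\Lambda)\subseteq\omega\ell^q(\Lambda)=\omega\ell^{2/(p+1)}(\Lambda)$. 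For any dual frame $(\tilde m_\lambda)_{\lambda\in\Lambda}$ of $(m_\lambda)_{\lambda\in\Lambda}$, $f=\sum_\lambda\langle f,m_\lambda\rangle\tilde m_\lambda$ is an expansion with respect to the frame $(\tilde m_\lambda)$, so Lemma~\ref{lem:decayapprox} with this $p$ gives $\|f-f_N\|_2^2\lesssim N^{-p}=N^{-\beta+\varepsilon}$ for the $N$-term approximation built from the $N$ largest $|\langle f,m_\lambda\rangle|$. As $\varepsilon>0$ was arbitrary, the claim follows; Theorem~\ref{theo:mainsparsity2} is the special case in which $(m_\lambda)$ is tight and $\tilde m_\lambda$ may be taken proportional to $m_\lambda$.

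Since Theorems~\ref{thm:almostorth}, \ref{theo:sparseeqivmol}, \ref{thm:consist}, and \ref{thm:acurveapprox} carry all the analytic weight, the remaining difficulty is entirely organizational: one must keep three slightly different families of order inequalities consistent — those demanded by Theorem~\ref{theo:sparseeqivmol} at the running exponent $q$, those stated in the hypothesis at the endpoint $q_0$, and the borderline case $q=q_0$ itself — and make sure that enough $\varepsilon$-room is retained so that simultaneously $q\le 1$, the strict inequality $M>3k/q-\tfrac{3-\alpha}{2}$ holds, and the curvelet coefficients genuinely sit in the closed space $\ell^q$ rather than only in $\omega\ell^{q_0}$. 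The choice $0<\varepsilon<\beta-1$ and $q=\tfrac{2}{\beta-\varepsilon+1}\in(q_0,1)$ made above is exactly what provides this slack.
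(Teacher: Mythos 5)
Your proposal is correct and follows essentially the same route as the paper's proof: curvelet coefficient decay from Theorem~\ref{thm:acurveapprox}, sparsity equivalence via Theorem~\ref{theo:sparseeqivmol}, transfer of $\ell^p$-membership through the bounded cross-Gramian (Proposition~\ref{prop:justification}), and finally Lemma~\ref{lem:decayapprox}. Your explicit bookkeeping of the exponent $q>q_0$ — in particular noting that the strict inequality required for $M$ in Theorem~\ref{theo:sparseeqivmol} is recovered precisely because one works strictly above the endpoint $2/(1+\beta)$ — is a point the paper's proof leaves implicit, but the argument is the same.
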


\begin{proof}
Let $C_\alpha(W^{(0)},W,V)=(\psi_\mu)_{\mu\in\Lambda^c}$ be the tight frame of $\alpha$-curvelets defined in Definition
\ref{defi:curvelets}, and let $f\in \cE^{\beta}(\RR^2)$. By \cite[Thm. 4.2]{GKKScurve2014}, the sequence of curvelet
coefficients $(\theta_\mu)_\mu$ given by $\theta_\mu=\langle f, \psi_\mu \rangle$ belongs to $\omega\ell^{p}(\Lambda^c)$
for every $p>\frac{2}{1+\beta}$. Since $\omega\ell^{p}\hookrightarrow \ell^{p+\epsilon}$ for arbitrary $\varepsilon>0$,
this further implies $(\theta_\mu)_\mu \in \ell^{p}(\Lambda^c)$ for every $p>\frac{2}{1+\beta}$.

Let now
\[
f=\sum_{\lambda\in\Lambda} c_\lambda \tilde{m}_\lambda
\]
be the canonical expansion of $f$ with respect to the dual frame $(\tilde{m}_\lambda)_\lambda$, with frame coefficients
$(c_\lambda)_\lambda$ given by
\[
c_\lambda= \langle f, m_\lambda\rangle = \sum_{\mu} \langle  \psi_\mu, m_\lambda\rangle \theta_\mu.
\]
Thus, they are related to the curvelet coefficients $(\theta_\mu)_\mu$ by the cross-Gramian $(\langle \psi_\mu,m_\lambda \rangle)_{\mu,\lambda}$.
By Theorem~\ref{theo:sparseeqivmol}, conditions (i) and (ii) guarantee that the frame $(m_\lambda)_{\lambda\in\Lambda}$ is sparsity equivalent
to $(\psi_\mu)_{\mu\in\Lambda^c}$ in $\ell^p$ for every $p>\frac{2}{1+\beta}$. This implies that the cross-Gramian is a bounded operator
$\ell^p(\Lambda^c)\rightarrow\ell^p(\Lambda)$, which maps $(\theta_\mu)_\mu$ to $(c_\lambda)_\lambda$. Hence, $(c_\lambda)_\lambda\in\ell^{p}(\Lambda)$
for every $p>\frac{2}{1+\beta}$. The embedding $\ell^{p}\hookrightarrow \omega\ell^{p}$ then proves $(c_\lambda)_\lambda\in\omega\ell^{p}(\Lambda)$
for every $p>\frac{2}{1+\beta}$. Finally, for arbitrary $\varepsilon>0$, the application of Lemma~\ref{lem:decayapprox} yields
\[
\|f-f_N\|_2^2 \lesssim N^{-\beta+\varepsilon},
\]
where $f_N$ denotes the $N$-term approximation with respect to the system $(\tilde{m}_\lambda)_\lambda$ obtained by
choosing the $N$ largest coefficients.
\end{proof}
\pg{
Taking into account Proposition~\ref{prop:ashearmol} and Theorem~\ref{thm:consist}, the statement of Theorem~\ref{theo:mainsparsity2},
for instance, implies the following novel result concerning cartoon approximation with band-limited $\beta$-shearlet systems.
\begin{theorem}\label{theo:bandlimshearapprox}
Let $\beta\in(1,2]$, and let $SH\big( \phi,\psi,\tilde{\psi};c,\beta \big)$ be a frame of cone-adapted $\beta$-shearlets
obtained from band-limited generators as in Definition~\ref{def:betashearsystem}.
Then each dual frame possesses an almost optimal $N$-term approximation rate for the class
of cartoon-like functions $\cE^{\beta}(\RR^2)$, i.e., for all $f\in\mathcal{E}^\beta(\R^2)$, we have
\[
\|f-f_N\|_2^2 \lesssim N^{-\beta+\varepsilon}, \quad\varepsilon >0 \text{ arbitrary},
\]
where $f_N$ denotes the $N$-term approximation obtained from the $N$ largest frame coefficients.
\end{theorem}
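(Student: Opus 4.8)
The plan is to obtain Theorem~\ref{theo:bandlimshearapprox} as a direct corollary of the general transfer result Theorem~\ref{theo:mainsparsity}, by checking that a band-limited cone-adapted $\beta$-shearlet frame fits the framework of $\alpha$-molecules with $\alpha=\beta^{-1}$ and meets the two hypotheses of that theorem. First I would set $\alpha:=\beta^{-1}$; since $\beta\in(1,2]$ this puts $\alpha\in[\tfrac12,1)$, which is exactly the range in which Theorem~\ref{theo:mainsparsity} applies, and the cartoon exponent $\alpha^{-1}$ appearing there then equals $\beta$, so that the relevant function class is indeed $\cE^\beta(\RR^2)$ as in the statement.

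Next I would verify hypothesis (ii) on the order. By Proposition~\ref{prop:ashearmol}(ii), a frame $SH\big(\phi,\psi,\tilde\psi;c,\beta\big)$ built from band-limited generators subject to the support conditions in (and preceding) Definition~\ref{def:betashearsystem} is a system of $\beta^{-1}$-molecules, i.e.\ of $\alpha$-molecules, of order $(\infty,\infty,\infty,\infty)$ with respect to the $\alpha$-shearlet parametrization $(\Lambda^s,\Phi^s)$ of Definition~\ref{defi:alphashearletpara}, with sampling data $\tau=c$, $\sigma=2^{\beta/2}$, $\eta_j=\sigma^{-j(1-\alpha)}$, and $L_j=\lceil\sigma^{j(1-\alpha)}\rceil$. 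Because all four control parameters are infinite, the lower bounds on $L,M,N_1,N_2$ required by Theorem~\ref{theo:mainsparsity}(ii) hold automatically, for every value of $k>0$.

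It then remains to check hypothesis (i), namely that $(\Lambda^s,\Phi^s)$ is $(\alpha,k)$-consistent with the $\alpha$-curvelet parametrization $(\Lambda^c,\Phi^c)$ of the reference tight frame $C_\alpha(W^{(0)},W,V)$ from Definition~\ref{defi:curvelets} for some admissible $k$. This is precisely Theorem~\ref{thm:consist}, which yields $(\alpha,k)$-consistency of any pair consisting of an $\alpha$-curvelet and an $\alpha$-shearlet parametrization for all $k>2$. Fixing any such $k>2$, both hypotheses of Theorem~\ref{theo:mainsparsity} are in force, and that theorem gives: every dual frame $(\tilde m_\lambda)_\lambda$ of $SH\big(\phi,\psi,\tilde\psi;c,\beta\big)$ satisfies $\|f-f_N\|_2^2\lesssim N^{-\beta+\varepsilon}$ for all $f\in\cE^\beta(\RR^2)$ and arbitrary $\varepsilon>0$, with $f_N$ the $N$-term approximation obtained by keeping the $N$ largest frame coefficients. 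This is exactly the asserted estimate, completing the argument.

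I do not expect a genuine obstacle within this short chain of implications; the substantive work is entirely delegated to the results it invokes. The real difficulties lie in Proposition~\ref{prop:ashearmol}(ii) — re-expressing the shear-based band-limited system as a rotation-based system of $\alpha$-molecules with the claimed (infinite) order — and in Theorem~\ref{thm:consist}, which rests on Lemma~\ref{lem:auxest} and controls the sums of the $\alpha$-scaled index distance $\omega_\alpha$ across the a priori mismatched curvelet and shearlet parametrizations. Both are established earlier, so the only point requiring care in the present proof is bookkeeping: the symbol $\beta$ plays two roles — the shearlet scaling exponent and the cartoon regularity exponent — which coincide under $\alpha=\beta^{-1}$, and since the molecule order is infinite no genuine constraint is imposed on the auxiliary parameter $k>2$.
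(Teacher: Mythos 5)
Your argument is exactly the paper's: Theorem~\ref{theo:bandlimshearapprox} is stated as a direct consequence of Theorem~\ref{theo:mainsparsity} (the dual-frame version of Theorem~\ref{theo:mainsparsity2}), with hypothesis (ii) supplied by Proposition~\ref{prop:ashearmol}(ii) (band-limited generators give order $(\infty,\infty,\infty,\infty)$, so the order conditions are vacuous) and hypothesis (i) by Theorem~\ref{thm:consist} for any $k>2$. The bookkeeping $\alpha=\beta^{-1}\in[\tfrac12,1)$ is also handled correctly, so the proposal is complete and matches the paper's route.
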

}
%-----------------------------------------------------
\section{Proofs}
\label{sec:proofs}
%-----------------------------------------------------

%--------------------------------
\subsection{Proofs of Subsection \ref{subsec:shearletmolecules}}
%--------------------------------

%--------------------------------
\subsubsection{Proof of Proposition \ref{prop:shearletmol}}
\label{subsec:proofshearletmol}
%--------------------------------

We confine the discussion to $\varepsilon = 0$, the other case being analogous, and suppress the superscript $\varepsilon$
in our notation. It is sufficient to show that, for each $\lambda=(\varepsilon,\ell,j,k)\in\Lambda^s$, the function
\[
g^{(\lambda)}(\cdot):=\psi_\lambda\left(A_{\alpha,\sigma^j} S_{\ell,j} R_{\theta_\lambda}^{-1} A^{-1}_{\alpha,s_\lambda}\cdot\right)
\]
satisfies (\ref{eq:molcond}).

For this, first note that the Fourier transform of $g^{(\lambda)}$ is given by
\[
\hat g^{(\lambda)}(\cdot) =\hat \psi_\lambda \left(A_{\alpha,\sigma^{-j}} S_{\ell,j}^{-T}R_{\theta_\lambda}^T A_{\alpha,s_\lambda}\cdot\right).
\]
Let us now examine the `transfer matrix' $T:=R_{\theta_\lambda}(S_{\ell,j})^{-1}$. Since $\theta_\lambda=\arctan(-\ell \eta_j)$,
we have
\[
S_{\ell,j}= \left(\begin{array}{cc}1 & \ell \eta_j\\0&1\end{array}\right)=\begin{pmatrix} 1 & -\tan(\theta_\lambda) \\ 0 & 1 \end{pmatrix}.
\]
Using $0=\tan(\theta_\lambda)\cos(\theta_\lambda) -\sin(\theta_\lambda)$, we obtain
\begin{align*}%\label{Eq:Transfer-Matrix}
T= \begin{pmatrix} \cos\theta_\lambda & 0 \\ \sin\theta_\lambda & \tan(\theta_\lambda)\sin(\theta_\lambda) +\cos(\theta_\lambda)  \end{pmatrix}
=\begin{pmatrix} \cos\theta_\lambda & 0 \\ \sin\theta_\lambda & \cos(\theta_\lambda)^{-1}  \end{pmatrix} =:  \begin{pmatrix} a & 0 \\ b & c \end{pmatrix},
\end{align*}
where the quantities $a,b,c$ depend on the index $\lambda\in\Lambda^s$.

Next we recall that $|\ell|\lesssim \sigma^{j(1-\alpha)}$ and $\eta_j\asymp \sigma^{-j(1-\alpha)}$, which yields $|\ell\eta_j|\lesssim1$. This implies
the existence of $0<\delta<\frac{\pi}{2}$ such that $|\theta_\lambda|=|\arctan(-\ell \eta_j)|\le \delta$ for all $\lambda\in\Lambda^s$.
As a consequence, we have
\begin{align}\label{eq:quantabc}
0<\cos(\delta)\le a \le1, &&  1\le c \le \cos(\delta)^{-1}<\infty, && |b|\le \sin(\delta).
\end{align}
Thus, the quantities $a,b,c$ are uniformly bounded in modulus. Furthermore, $a$ and $c$ are strictly positive and bounded uniformly from
below by $\cos(\delta)$.

Next, observe that the matrix $A_{\alpha,\sigma^{-j}} S_{\ell,j}^{-T} R_{\theta_\lambda}^T A_{\alpha,s_\lambda}=A_{\alpha,\sigma^{-j}} T^T A_{\alpha,\sigma^j}$ has the form
\[
\left(\begin{array}{cc}a & \sigma^{-j(1-\alpha)}b \\0&c\end{array}\right).
\]
Note that $|\sigma^{-j(1-\alpha)}|\le1$ for every $j\in\N_0$. Thus, using the uniform boundedness of $|a|,|b|,|c|$ and the chain rule,
we can estimate for any $|\rho |\le L$:
\begin{align}\label{eq:chainest}
|\partial^{\rho} \hat g^{(\lambda)}(\xi)| &\lesssim \sup_{|\gamma|\le L} \left|\partial^\gamma \hat \psi_\lambda\left(\left(\begin{array}{cc}a & \sigma^{-j(1-\alpha)}b \\
0&c\end{array}\right)\xi\right)\right|.
\end{align}
Then we utilize the moment estimate (\ref{eq:shearcond}) for $\hat\psi$. This gives us the moment property required in (\ref{eq:molcond}),
\begin{align*}
|\partial^{\rho} \hat g^{(\lambda)}(\xi)| \lesssim \left( \sigma^{-j}+|\xi_1|+ 2\cdot \sigma^{-j(1-\alpha)}|\xi_2|\right)^{M} \lesssim \left( s_\lambda^{-1}+|\xi_1|+s_\lambda^{-(1-\alpha)}|\xi_2|\right)^{M}.
\end{align*}

It remains to show the decay of $\partial^{\rho} \hat g^{(\lambda)}$ for large frequencies $\xi$. We obtain from \eqref{eq:chainest} and the decay estimate in (\ref{eq:shearcond}),
\begin{align*}
|\partial^{\rho} \hat g^{(\lambda)}(\xi)|\lesssim \left \langle\left|\left(\begin{array}{cc}a & s_\lambda^{-(1-\alpha)}b\\
0&c\end{array}\right)\xi \right|\right\rangle^{-N_1} \langle c\xi_2 \rangle^{-N_2}\lesssim\left\langle |\xi|\right\rangle^{-N_1}\langle \xi_2\rangle^{-N_2}.
\end{align*}
The last estimate is a consequence of \eqref{eq:quantabc}. To verify this we write
\[
\left(\begin{array}{cc}a & \sigma^{-j(1-\alpha)}b \\0&c\end{array}\right) =
\left(\begin{array}{cc}a & 0 \\0&c\end{array}\right) \left(\begin{array}{cc}1 & h \\0&1\end{array}\right)=:\diag(a,c)\cdot S_h
\]
with $h= \sigma^{-j(1-\alpha)}b/a$. Due to \eqref{eq:quantabc} the shear parameter $h$ is bounded in modulus, which implies $|S_{h}\xi| \asymp |\xi|$ for $\xi\in\R^2$.
Finally, $|\diag(a,c) \xi| \asymp |\xi|$ and $|c\xi_2|\asymp|\xi_2|$ also by \eqref{eq:quantabc}.

%It remains to prove the other direction. A reverse argumentation shows that condition \eqref{eq:molcond} for $a^{(\lambda)}$ implies \eqref{eq:shearcond}
%for $\psi_\lambda$. This finishes the proof.

%--------------------------------
\subsubsection{Proof of Proposition \ref{prop:ashearmol}(iii)}
\label{subsec:proofashearmol}
%--------------------------------

It suffices to prove that $SH(\phi,\psi,\tilde{\psi};c,\beta)$ is a system of $\alpha$-shearlet molecules for $\alpha=\beta^{-1}$ of
order $(L,M-L,N_1,N_2)$, where $L\in\{0,\ldots,M\}$, with the parameters of the $\alpha$-shearlet parametrization being given by
$\tau=c$, $\sigma=2^{\beta/2}$, $\eta_j=\sigma^{-j(1-\alpha)}$ and $L_j=\lceil \sigma^{j(1-\alpha)} \rceil$.

First, we name and index the functions of the system $SH(\phi,\psi,\tilde{\psi};c,\beta)$ in the following way. For $j\ge0$,
$\ell\in\Z$ with $|\ell|\le \lceil 2^{j(\beta-1)/2} \rceil$ and $k\in\Z^2$ we let
\begin{align*}
\psi_{(0 , j, \ell , k)} &:=\psi_{j,\ell,ck}= 2^{j(\beta+1)/4}\psi(S_\ell A_{\beta^{-1},2^{j\beta/2}}\cdot-ck ), \\
\psi_{(1 , j, \ell , k)} &:= \tilde{\psi}_{j,\ell,ck}= 2^{j(\beta+1)/4}\tilde{\psi}(S^T_\ell \tilde{A}_{\beta^{-1},2^{j\beta/2}}\cdot-ck ).
\end{align*}
At the coarse scale we put $\psi_{(0 , -1, 0 , k)} := \phi_{ck} = \phi(\cdot - ck)$ for $k\in\Z^2$.

Since $\alpha=\beta^{-1}$ and $\sigma=2^{\beta/2}$ the scaling matrix $A_{\beta^{-1},2^{j\beta/2}}$ can be rewritten in the form
\begin{align*}
A_{\beta^{-1},2^{j\beta/2}}=\begin{pmatrix} 2^{j\beta/2} & 0 \\ 0 & 2^{j/2}\end{pmatrix} = \begin{pmatrix} (2^{\beta/2})^j & 0 \\ 0 & (2^{\beta/2})^{j/\beta}\end{pmatrix}
= \begin{pmatrix} \sigma^j & 0 \\ 0 & \sigma^{j\alpha}\end{pmatrix} = A^0_{\alpha,\sigma^j},
\end{align*}
and analogously $\tilde{A}_{\beta^{-1},2^{j\beta/2}}=A^1_{\alpha,\sigma^j}$. Furthermore, using $2^{j(\beta\pm 1)/2}=\sigma^{j(1\pm\alpha)}$, we obtain
\[
S_\ell A_{\beta^{-1},2^{j\beta/2}}=A_{\beta^{-1},2^{j\beta/2}}S_{\ell2^{(j(1-\beta))/2}}=A_{\beta^{-1},2^{j\beta/2}}S_{\ell \sigma^{-j(1-\alpha)}}=A^0_{\alpha,\sigma^j}S_{\ell\eta_j}=A^0_{\alpha,\sigma^j}S^0_{\ell,j}
\]
and
\[
S^T_\ell \tilde{A}_{\beta^{-1},2^{j\beta/2}}=\tilde{A}_{\beta^{-1},2^{j\beta/2}}S^T_{\ell2^{(j(1-\beta))/2}}=\tilde{A}_{\beta^{-1},2^{j\beta/2}}S^T_{\ell \sigma^{-j(1-\alpha)}}=A^1_{\alpha,\sigma^j}S^T_{\ell\eta_j}=A^1_{\alpha,\sigma^j}S^1_{\ell,j}.
\]
Taking into account $\tau=c$ and $2^{j(\beta+1)/4}=\sigma^{j(1+\alpha)/2}$, we obtain the following representation
\begin{align*}
\psi_{(0 , j, \ell , k)} &= \sigma^{j(1+\alpha)/2}\psi(A^0_{\alpha,\sigma^j}S^0_{\ell,j}\cdot-\tau k ), \\
\psi_{(1 , j, \ell , k)} &= \sigma^{j(1+\alpha)/2}\tilde{\psi}(A^1_{\alpha,\sigma^j}S^1_{\ell,j}\cdot-\tau k ).
\end{align*}
Therefore the system $SH(\phi,\psi,\tilde{\psi};c,\beta)=(\psi_\lambda)_{\lambda\in\Lambda^s}$ has the desired form with respect to the
generators given by $\gamma^{0}_{j,\ell,k}:=\psi$, $\gamma^{1}_{j,\ell,k}:=\tilde{\psi}$, and $\gamma^{0}_{-1,0,k}:=\sigma^{(1+\alpha)/2}\phi(A^0_{\alpha,\sigma}\cdot)$ for $j\ge0$,
$\ell\in\Z$ with $|\ell|\le \lceil 2^{j(\beta-1)/2} \rceil$, and $k\in\Z^2$.

It remains to prove that these generators satisfy \eqref{eq:shearcond}. We restrict our considerations to the functions $\gamma^{0}_{j,\ell,k}=\psi$.
The inverse Fourier transform of $\partial^{\rho} \hat{\psi}$, where $\rho\in\N_0^2$, is up to a constant given by $x\mapsto x^{\rho}\psi(x)$.
By smoothness and compact support of $\psi_1,\psi_2$, we find that for any $|\rho|\le L$ the functions
\[
x\mapsto\partial^{(N_1,N_1+N_2)}\big(x^{\rho}\psi(x)\big) \quad\text{ and }\quad x\mapsto x^{\rho}\psi(x)
\]
belong to $L^1(\R^2)$. Hence, the Fourier transforms
\[
 \xi\mapsto\xi_1^{N_1}\xi_2^{N_1+N_2}\partial^{\rho}\hat{\psi}(\xi) \quad\text{ and }\quad \xi\mapsto\partial^{\rho}\hat{\psi}(\xi)
\]
are continuous and contained in $L^\infty(\R^2)$. It follows that
\[
\langle\xi_1\rangle^{N_1}\langle\xi_2\rangle^{N_1+N_2}\partial^{\rho}\hat{\psi}(\xi)
\]
is bounded. Using
$
\langle x \rangle \langle y \rangle \ge \langle \sqrt{x^2+y^2} \rangle
$
we get the decay estimate for large frequencies
\[
|\partial^{\rho}\hat{\psi}(\xi)| \lesssim \langle |\xi| \rangle^{-N_1} \langle \xi_2 \rangle^{-N_2}.
\]

Let us turn to the moment conditions. Let $\rho=(\rho_1,\rho_2)\in\N_0^2$ with $|\rho_1|\le L$ for some $L=0,\ldots,M$. Then
\[
x^\rho\psi(x)=x_1^{\rho_1}\psi_1(x_1)x_2^{\rho_2}\psi_2(x_2)
\]
restricted to the variable $x_1$ possesses at least $M-L$ vanishing moments, since $\psi_1$ is assumed to possess $M$ vanishing moments.
This yields a decay of order $\min\{1,|\xi_1|^{M-L}\}$ for the derivatives up to order $L$ of $\hat{\psi}$ by the following lemma,
whose proof can be found, e.g., in \cite{Grohs2011}.

\begin{lemma}[\cite{Grohs2011}]\label{lem:momfreq}
Suppose that $g:\R\rightarrow\C$ is continuous, compactly supported and possesses $M$ vanishing moments. Then
\[
|\hat{g}(\xi)| \lesssim \min\{1,|\xi|\}^M.
\]
\end{lemma}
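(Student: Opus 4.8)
The plan is to split the frequency axis into the two regimes $|\xi|\ge 1$ and $|\xi|\le 1$ and to treat them by entirely different mechanisms. For the high-frequency regime only the crude $L^1$-bound is needed: since $g$ is continuous with support contained in some interval $[-R,R]$, it belongs to $L^1(\R)$, so $\hat g$ is continuous with $|\hat g(\xi)|\le\|g\|_1$ for every $\xi\in\R$; as $\min\{1,|\xi|\}=1$ whenever $|\xi|\ge1$, this already yields $|\hat g(\xi)|\lesssim\min\{1,|\xi|\}^M$ there. Hence the entire content of the lemma is the small-frequency estimate $|\hat g(\xi)|\lesssim|\xi|^M$ for $|\xi|\le1$.

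For the latter I would exploit the $M$ vanishing moments $\int_\R x^m g(x)\,dx=0$, $m=0,\dots,M-1$, by Taylor-expanding the Fourier kernel in the variable $x$. Writing $e^{-2\pi i x\xi}=\sum_{m=0}^{M-1}\tfrac{(-2\pi i x\xi)^m}{m!}+r_M(x,\xi)$ with the integral form of the remainder $r_M(x,\xi)=\tfrac{(-2\pi i x\xi)^M}{(M-1)!}\int_0^1(1-t)^{M-1}e^{-2\pi i x\xi t}\,dt$, one obtains the pointwise bound $|r_M(x,\xi)|\le\tfrac{(2\pi|x||\xi|)^M}{M!}$. Substituting this decomposition into $\hat g(\xi)=\int_{-R}^{R}g(x)e^{-2\pi i x\xi}\,dx$, the polynomial part contributes $\sum_{m=0}^{M-1}\tfrac{(-2\pi i\xi)^m}{m!}\int x^m g(x)\,dx=0$ by the moment hypothesis, so that $\hat g(\xi)=\int_{-R}^{R}g(x)\,r_M(x,\xi)\,dx$.

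Finally I would estimate this last integral using $|x|\le R$ on $\supp g$, which gives $|\hat g(\xi)|\le\tfrac{(2\pi R)^M}{M!}\|g\|_1|\xi|^M$; for $|\xi|\le1$ this is precisely $\lesssim|\xi|^M=\min\{1,|\xi|\}^M$, with an implicit constant depending only on $M$, $R$, and $\|g\|_1$. Combining the two regimes completes the argument; the case $M=0$ is already subsumed in the $L^1$-bound. There is no genuine obstacle here — the only point deserving attention is that the Taylor remainder estimate must be made uniform over the support of $g$, which is exactly what compact support provides, and the argument would not survive under mere decay hypotheses on $g$.
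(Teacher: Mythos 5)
Your argument is correct and is essentially the standard one: the paper does not reproduce a proof of this lemma but refers to \cite{Grohs2011}, where the same mechanism is used, namely the trivial $L^1$-bound for $|\xi|\ge 1$ and, for $|\xi|\le 1$, a Taylor expansion of the Fourier kernel whose polynomial part is annihilated by the $M$ vanishing moments, with the remainder bounded uniformly on the compact support of $g$. (Only your closing aside is slightly overstated: the same argument survives under decay hypotheses guaranteeing $\int |x|^M|g(x)|\,dx<\infty$; this does not affect the proof of the stated lemma.)
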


The proof is finished.

%--------------------------------
\subsection{Proof of Theorem \ref{thm:almostorth}}
\label{subsec:proofalmostorth}
%--------------------------------

We start by collecting some useful lemmata in Subsections~\ref{sec:estimates}, \ref{sec:basicestimates}, and \ref{subsec:diffop}, followed by the
actual proof of Theorem \ref{thm:almostorth} in Subsection~\ref{sec:almostorth}.

%--------------------------------
\subsubsection{General Estimates}\label{sec:estimates}
%--------------------------------

The following lemma can be found in \cite[Appendix K.1]{Grafakos2008}.

\begin{lemma}\label{lem:grafakos}
For $N>1$ and $a,a'\in \mathbb{R}_+$, we have the inequality
\[
\int_\mathbb{R}\left(1+a|x|\right)^{-N} \left(1+a'|x-y|\right)^{-N}dx\lesssim \max\{a,a'\}^{-1}(1 + \min\{a,a'\}|y|)^{-N}.
\]
\end{lemma}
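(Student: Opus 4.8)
The plan is to reconstruct this standard convolution‑type estimate by a symmetry reduction followed by a dyadic splitting of the line around the two ``centres'' $0$ and $y$. Throughout, the implicit constants will depend only on $N$.

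First I would exploit symmetry: the substitution $x\mapsto y-x$ turns the integrand into $(1+a'|x|)^{-N}(1+a|x-y|)^{-N}$, i.e.\ it interchanges $a$ and $a'$. Hence we may assume $a\le a'$, so that $\min\{a,a'\}=a$ and $\max\{a,a'\}=a'$, and it suffices to show
\[
I:=\int_\R (1+a|x|)^{-N}(1+a'|x-y|)^{-N}\,dx\;\lesssim\;\frac{1}{a'}\,(1+a|y|)^{-N}.
\]
Here I write $C_N:=\int_\R(1+|u|)^{-N}\,du<\infty$, finiteness being exactly where the hypothesis $N>1$ enters, and I note $\int_\R(1+b|u|)^{-N}\,du=C_N/b$ for all $b>0$ by scaling.

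Next I would split $\R=R_1\cup R_2$ with $R_1:=\{|x|\ge|y|/2\}$ and $R_2:=\{|x|<|y|/2\}$. On $R_1$ one has $(1+a|x|)^{-N}\le(1+a|y|/2)^{-N}\le 2^N(1+a|y|)^{-N}$, so integrating the remaining factor over all of $\R$ gives $\int_{R_1}\lesssim a'^{-1}(1+a|y|)^{-N}$, already of the required form. On $R_2$ one has $|x-y|\ge|y|-|x|>|y|/2$, hence $(1+a'|x-y|)^{-N}\le 2^N(1+a'|y|)^{-N}$, while $\int_{|x|<|y|/2}(1+a|x|)^{-N}\,dx\le\min\{|y|,C_N/a\}$; therefore
\[
\int_{R_2}\;\lesssim\;\min\Big\{|y|,\tfrac{C_N}{a}\Big\}\,(1+a'|y|)^{-N}.
\]

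It remains --- and this is the only genuinely delicate point --- to absorb this bound into $a'^{-1}(1+a|y|)^{-N}$, i.e.\ to prove the elementary inequality
\[
a'\,\min\Big\{|y|,\tfrac{C_N}{a}\Big\}\;\lesssim\;\Big(\tfrac{1+a'|y|}{1+a|y|}\Big)^N .
\]
To do so I would put $s:=a'|y|\ge t:=a|y|\ge0$, observe that the left‑hand side is $\lesssim s/\max\{1,t\}$, and distinguish two cases: if $t\le1$ the claim reduces to $s\lesssim(1+s)^N$, trivial since $N\ge1$; if $t>1$ then, using $1+t\le2t$ and $1+s\ge s$, one has $(1+s)/(1+t)\ge s/(2t)$, so the claim reduces to $s/t\lesssim(s/(2t))^N$, again trivial since $s/t\ge1$ and $N\ge1$. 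Combining the two regions then yields $I\lesssim a'^{-1}(1+a|y|)^{-N}=\max\{a,a'\}^{-1}(1+\min\{a,a'\}|y|)^{-N}$, as desired. I expect the $R_2$ estimate to be the main obstacle: there the slowly decaying factor $(1+a|x|)^{-N}$ is integrated over a set of length $\asymp\min\{|y|,a^{-1}\}$, and only the algebraic interplay between $a$, $a'$ and $|y|$ lets one trade this length against the fast factor $(1+a'|y|)^{-N}$.
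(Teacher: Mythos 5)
Your proof is correct: the symmetry reduction to $a\le a'$, the split of $\R$ at $|x|=|y|/2$, and the final elementary inequality in $s=a'|y|$, $t=a|y|$ all check out, with $N>1$ used exactly where needed for $\int_\R(1+|u|)^{-N}\,du<\infty$. The paper does not prove this lemma but only cites \cite[Appendix~K.1]{Grafakos2008}, and your argument is essentially the standard one given there, so nothing further is needed.
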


The following result can be regarded as a corollary from the previous lemma.

\begin{lemma}\label{lem:bumps}
Assume that $|\theta| \le \frac{\pi}{2}$ and $N>1$. Then we have for $a,a'>0$ the inequality
\begin{equation}\label{eq:angulardecay}
\int_{\mathbb{T}}\left(1+a|\sin(\varphi)|\right)^{-N}\left(1+a'|\sin(\varphi+\theta)|\right)^{-N}d\varphi\lesssim \max\{a,a'\}^{-1}(1 + \min\{a,a'\}|\theta|)^{-N}.
\end{equation}
\end{lemma}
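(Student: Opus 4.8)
The plan is to deduce Lemma~\ref{lem:bumps} from Lemma~\ref{lem:grafakos} by unfolding the torus $\mathbb{T}=[-\tfrac\pi2,\tfrac\pi2]$ and replacing $|\sin(\cdot)|$ by the comparable quantity $\operatorname{dist}(\cdot,\pi\mathbb{Z})$, at the cost of splitting the second bump factor into the (at most) three branches of $|\sin|$ that are relevant on the range of $\varphi+\theta$. The starting point is the elementary two-sided estimate $\tfrac2\pi\operatorname{dist}(t,\pi\mathbb{Z})\le|\sin t|\le\operatorname{dist}(t,\pi\mathbb{Z})$, valid for all $t\in\mathbb{R}$, which follows from the concavity of $\sin$ on $[0,\tfrac\pi2]$ together with the $\pi$-periodicity and evenness of $|\sin|$.

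First I would write $\int_{\mathbb{T}}=\int_{-\pi/2}^{\pi/2}$ and observe that for $\varphi$ in this range $\operatorname{dist}(\varphi,\pi\mathbb{Z})=|\varphi|$, so the first factor is bounded by $(1+\tfrac{2a}{\pi}|\varphi|)^{-N}$. For the second factor, since $\varphi\in[-\tfrac\pi2,\tfrac\pi2]$ and $|\theta|\le\tfrac\pi2$ we have $\varphi+\theta\in[-\pi,\pi]$, hence $\operatorname{dist}(\varphi+\theta,\pi\mathbb{Z})=\min_{n\in\{-1,0,1\}}|\varphi+\theta-n\pi|$, and therefore, using $(1+\min_j y_j)^{-N}=\max_j(1+y_j)^{-N}\le\sum_j(1+y_j)^{-N}$ for $y_j\ge 0$,
\[
(1+a'|\sin(\varphi+\theta)|)^{-N}\le\sum_{n\in\{-1,0,1\}}\Bigl(1+\tfrac{2a'}{\pi}\,|\varphi-(n\pi-\theta)|\Bigr)^{-N}.
\]

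Plugging these two bounds into the integral, extending the $\varphi$-integration from $[-\tfrac\pi2,\tfrac\pi2]$ to all of $\mathbb{R}$ (the integrand is non-negative), and applying Lemma~\ref{lem:grafakos} to each of the three resulting integrals with $y=n\pi-\theta$ gives, after absorbing the fixed constants $\tfrac2\pi$ into the implicit constant (legitimate since $1+\tfrac2\pi s\ge\tfrac2\pi(1+s)$ for $s\ge 0$ and $\max\{\tfrac2\pi a,\tfrac2\pi a'\}^{-1}\asymp\max\{a,a'\}^{-1}$),
\[
\int_{\mathbb{T}}\!\dots\, d\varphi\ \lesssim\ \sum_{n\in\{-1,0,1\}}\max\{a,a'\}^{-1}\bigl(1+\min\{a,a'\}\,|n\pi-\theta|\bigr)^{-N}.
\]
The $n=0$ term is already the asserted right-hand side. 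For $n=\pm1$ one has $|n\pi-\theta|\ge\pi-|\theta|\ge|\theta|$, using $|\theta|\le\tfrac\pi2$, so $1+\min\{a,a'\}|n\pi-\theta|\ge1+\min\{a,a'\}|\theta|$ and the corresponding terms are likewise dominated by $\max\{a,a'\}^{-1}(1+\min\{a,a'\}|\theta|)^{-N}$. Summing the three contributions yields the claim.

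I do not expect a serious obstacle here: the argument is essentially a change of viewpoint that reduces everything to the already available Lemma~\ref{lem:grafakos}. The only point requiring a little care is the unfolding of $\mathbb{T}$ in the second argument $\varphi+\theta$ — namely, identifying that exactly the branches $n\in\{-1,0,1\}$ of $|\sin|$ can come close to zero on the relevant range, and then verifying $|n\pi-\theta|\gtrsim|\theta|$ for $n=\pm1$ so that these extra branches do not spoil the decay in $|\theta|$.
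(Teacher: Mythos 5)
Your proof is correct and follows essentially the same route as the paper's: compare $|\sin|$ with the distance to $\pi\mathbb{Z}$, split the second factor into the finitely many relevant branches, reduce each piece to Lemma~\ref{lem:grafakos}, and absorb the shifted terms using $|n\pi-\theta|\ge|\theta|$ for $|\theta|\le\frac{\pi}{2}$. The only (cosmetic) difference is that you work on $[-\frac{\pi}{2},\frac{\pi}{2}]$ and obtain three terms where the paper unfolds $[-\pi,\pi]$ into nine.
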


\begin{proof}
For $\varphi\in\mathbb{T}$, we have the estimate
\[
|\sin(\varphi)|\leq \left\{\begin{array}{cc}|\varphi| & \varphi \in I_1:=\left [ -\frac{\pi}{2},\frac{\pi}{2}\right],\\
|\varphi - \pi|&\varphi\in I_2:=\left [ \frac{\pi}{2},\pi\right],\\
|\varphi + \pi|&\varphi\in I_3:=\left [-\pi,- \frac{\pi}{2}\right].
\end{array}\right.
\]
In order to use Lemma \ref{lem:grafakos} we now split $\mathbb{T}$ into nine intervals depending on $\varphi + \theta, \varphi \in I_1,I_2,I_3$.
Then the left-hand side of (\ref{eq:angulardecay}) can be estimated by nine terms of the form
\[
\int_\mathbb{\mathbb{R}}\left(1+a|\varphi|\right)^{-N}\left(1+a'|\varphi+\vartheta+\theta|\right)^{-N}d\varphi,
\]
where $\vartheta\in \left\{0,\pm\pi,\pm 2\pi \right\}$. By Lemma \ref{lem:grafakos}, this expression can be bounded by a constant
times
\[
\max\{a,a'\}^{-1}(1 + \min\{a,a'\}|\theta + \vartheta|)^{-N}.
\]
Now it remains to note that for $\vartheta\in \left\{\pm \pi,\pm 2\pi\right\}$ and $|\theta|\le \frac{\pi}{2}$ we have $|\theta + \vartheta|\geq |\theta|$.
This proves the lemma.
\end{proof}

%--------------------------------
\subsubsection{Basic Estimates of $S_{\lambda,M,N_1,N_2}$}\label{sec:basicestimates}
%--------------------------------

We now consider the function $S_{\lambda,M,N_1,N_2} : \R^+_0 \times [0,2\pi) \to \R$ for $\lambda \in \Lambda$ \msch{and
$M, N_1, N_2 \in \N_0$ which} is defined in polar coordinates by
\[
S_{\lambda,M,N_1,N_2}(r,\varphi):= \min \left\{1,s_\lambda^{-1}(1+r)\right\}^{M}\left(1+s_\lambda^{(1-\alpha)}|
\sin(\varphi+\theta_\lambda)|\right)^{-N_2}\left(1 + s_\lambda^{-1}r\right)^{-N_1}.
\]
The reader might want to compare this definition with \eqref{eq:moldecaypolar2}.

The following lemma will be used in order to decouple the angular and the radial variables of this function.

\begin{lemma}\label{lem:polarest}
For every $0\le K \le N_2$,
\[
\min \left\{1,s_\lambda^{-1 }(1+r)\right\}^{M}\left(1 + s_\lambda^{-1}r\right)^{-N_1} \left(1 + s_\lambda^{-\alpha}r |\sin(\varphi + \theta_\lambda)|\right)^{-N_2}
\lesssim S_{\lambda,M-K,N_1,K}(r,\varphi).
\]
\end{lemma}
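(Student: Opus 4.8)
The plan is to verify the inequality pointwise in $(r,\varphi)$ by a simple case distinction on the size of the quantity $s_\lambda^{-\alpha} r |\sin(\varphi+\theta_\lambda)|$ relative to $1$, converting the anisotropic angular factor $\left(1+s_\lambda^{-\alpha}r|\sin(\varphi+\theta_\lambda)|\right)^{-N_2}$ on the left into the ``pure'' angular factor $\left(1+s_\lambda^{(1-\alpha)}|\sin(\varphi+\theta_\lambda)|\right)^{-K}$ appearing in $S_{\lambda,M-K,N_1,K}$, at the cost of lowering $N_1$-type radial decay. Recall that
\[
S_{\lambda,M-K,N_1,K}(r,\varphi)= \min\left\{1,s_\lambda^{-1}(1+r)\right\}^{M-K}\left(1+s_\lambda^{(1-\alpha)}|\sin(\varphi+\theta_\lambda)|\right)^{-K}\left(1+s_\lambda^{-1}r\right)^{-N_1},
\]
so the $\left(1+s_\lambda^{-1}r\right)^{-N_1}$ factor is common to both sides and the whole game is to bound
\[
\min\left\{1,s_\lambda^{-1}(1+r)\right\}^{M}\left(1+s_\lambda^{-\alpha}r|\sin(\varphi+\theta_\lambda)|\right)^{-N_2}
\lesssim \min\left\{1,s_\lambda^{-1}(1+r)\right\}^{M-K}\left(1+s_\lambda^{(1-\alpha)}|\sin(\varphi+\theta_\lambda)|\right)^{-K}.
\]

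The key algebraic observation is the identity $s_\lambda^{-\alpha} r = s_\lambda^{-1}r \cdot s_\lambda^{1-\alpha}$. First I would treat the regime $s_\lambda^{-1}r \ge 1$ (equivalently $r\ge s_\lambda$): here $\min\{1,s_\lambda^{-1}(1+r)\}=1$, so the left side is $\left(1+s_\lambda^{-\alpha}r|\sin(\varphi+\theta_\lambda)|\right)^{-N_2}$, and since $s_\lambda^{-\alpha}r \ge s_\lambda^{1-\alpha}$ we get $1+s_\lambda^{-\alpha}r|\sin(\varphi+\theta_\lambda)| \ge 1 + s_\lambda^{1-\alpha}|\sin(\varphi+\theta_\lambda)|$; using $N_2\ge K$ this is $\le \left(1+s_\lambda^{(1-\alpha)}|\sin(\varphi+\theta_\lambda)|\right)^{-K}$, which (as $\min\{1,\dots\}^{M}=\min\{1,\dots\}^{M-K}=1$) is exactly the right side. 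In the complementary regime $s_\lambda^{-1}r<1$, I would split further according to whether $s_\lambda^{1-\alpha}|\sin(\varphi+\theta_\lambda)|$ is $\le 1$ or $>1$. If it is $\le1$, then $\left(1+s_\lambda^{(1-\alpha)}|\sin(\varphi+\theta_\lambda)|\right)^{-K}\asymp 1$, while the left side is trivially $\lesssim \min\{1,s_\lambda^{-1}(1+r)\}^{M}\le \min\{1,s_\lambda^{-1}(1+r)\}^{M-K}$, since $r<s_\lambda$ forces $s_\lambda^{-1}(1+r)\lesssim 1$ so dropping $K$ powers only increases the bound; done. If instead $s_\lambda^{1-\alpha}|\sin(\varphi+\theta_\lambda)|>1$, I would use $s_\lambda^{-\alpha}r|\sin(\varphi+\theta_\lambda)| = (s_\lambda^{-1}r)\,(s_\lambda^{1-\alpha}|\sin(\varphi+\theta_\lambda)|)$ together with the elementary fact that for $a\le 1\le b$ one has $1+ab \gtrsim (ab)$ is too weak, so instead I would absorb the needed angular decay by writing $\min\{1,s_\lambda^{-1}(1+r)\}^{K}\asymp (s_\lambda^{-1}r)^{K}$ (valid since $r<s_\lambda$, up to the harmless $1+r$ vs $r$ adjustment for $r\gtrsim1$; for $r\lesssim1$ the angular factor $s_\lambda^{-\alpha}r|\sin|$ is itself bounded and the estimate is trivial), and then noting
\[
\left(1+s_\lambda^{-\alpha}r|\sin(\varphi+\theta_\lambda)|\right)^{-N_2} \le \left(s_\lambda^{-\alpha}r|\sin(\varphi+\theta_\lambda)|\right)^{-K} = (s_\lambda^{-1}r)^{-K}\left(s_\lambda^{1-\alpha}|\sin(\varphi+\theta_\lambda)|\right)^{-K},
\]
using $N_2\ge K$ and $s_\lambda^{-\alpha}r|\sin(\varphi+\theta_\lambda)|\ge 1$ in this sub-case. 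Multiplying by the extracted factor $(s_\lambda^{-1}r)^{K}\asymp\min\{1,s_\lambda^{-1}(1+r)\}^{K}$ on the left cancels $(s_\lambda^{-1}r)^{-K}$ and produces $\min\{1,s_\lambda^{-1}(1+r)\}^{M-K}\left(1+s_\lambda^{1-\alpha}|\sin(\varphi+\theta_\lambda)|\right)^{-K}$ up to constants, as desired.

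The only mild technical nuisance — and the step I expect to need the most care — is the replacement of $\min\{1,s_\lambda^{-1}(1+r)\}^{K}$ by $(s_\lambda^{-1}r)^{K}$ in the range $s_\lambda^{-1}r<1$ when $r$ is small, where $r$ and $1+r$ are not comparable; there one simply observes that $s_\lambda^{-\alpha}r|\sin(\varphi+\theta_\lambda)| \le s_\lambda^{-\alpha}r \lesssim s_\lambda^{1-\alpha}\cdot s_\lambda^{-1}r$ stays bounded (indeed $\le$ const) because $s_\lambda$ is bounded below by hypothesis in the applications, so both factors on the right are $\asymp 1$ and the inequality degenerates to the trivial bound $\text{LHS}\lesssim 1$. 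Assembling the three (sub-)cases and absorbing all the $\asymp$-constants into the implicit constant of $\lesssim$ completes the proof; the constant is plainly independent of $\lambda$ since it depends only on $M,N_1,N_2,K$ and the lower bound on $s_\lambda$.
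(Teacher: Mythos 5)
Your strategy is essentially the paper's: cancel the common factor $\left(1+s_\lambda^{-1}r\right)^{-N_1}$, trade $K$ of the $N_2$ powers of the anisotropic angular factor against $K$ powers of the moment factor, and split into cases according to $r$ versus $1$ and $s_\lambda$. Regime 1 ($r\ge s_\lambda$), sub-case 2a ($s_\lambda^{(1-\alpha)}|\sin(\varphi+\theta_\lambda)|\le 1$), and sub-case 2b with $1\lesssim r<s_\lambda$ are all sound. (In the last of these, your justification "$s_\lambda^{-\alpha}r|\sin(\varphi+\theta_\lambda)|\ge 1$" need not hold, since $s_\lambda^{-1}r<1$ can overwhelm $s_\lambda^{(1-\alpha)}|\sin(\varphi+\theta_\lambda)|>1$; but the step survives because $(1+x)^{-N_2}\le x^{-K}$ holds for \emph{all} $x>0$ when $N_2\ge K\ge 0$.)

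The genuine gap is the remaining sub-case $r\lesssim 1$ with $s_\lambda^{(1-\alpha)}|\sin(\varphi+\theta_\lambda)|>1$. There you assert that "both factors on the right are $\asymp 1$" so that the inequality "degenerates to the trivial bound LHS $\lesssim 1$". This is false: by the very definition of this sub-case the target factor $\left(1+s_\lambda^{(1-\alpha)}|\sin(\varphi+\theta_\lambda)|\right)^{-K}$ is small and tends to $0$ as $s_\lambda\to\infty$, and for $r\le 1$ also $\min\{1,s_\lambda^{-1}(1+r)\}^{M-K}\asymp s_\lambda^{-(M-K)}$ is small; so proving $\mathrm{LHS}\lesssim 1$ does not prove $\mathrm{LHS}\lesssim\mathrm{RHS}$ — you still must extract the angular decay. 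The repair is exactly the paper's first case: $s_\lambda^{(1-\alpha)}|\sin(\varphi+\theta_\lambda)|>1$ forces $s_\lambda>1$, whence for $r\le 1$
\[
\min\{1,s_\lambda^{-1}(1+r)\}\;\le\; 2s_\lambda^{-1}\;\le\; 2s_\lambda^{-(1-\alpha)}\;\le\; 4\left(1+s_\lambda^{(1-\alpha)}|\sin(\varphi+\theta_\lambda)|\right)^{-1},
\]
and raising this to the power $K$ and keeping $\min\{1,s_\lambda^{-1}(1+r)\}^{M-K}$ closes the case. Note this also shows the appeal to a lower bound $s_\lambda\ge c$ "from the applications" is both unnecessary (the lemma holds without it) and beside the point: making the right-hand side $\asymp 1$ would require an \emph{upper} bound on $s_\lambda$, not a lower one.
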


\begin{proof}
After choosing $K$, we can estimate the quantity on the left hand side by
\[
\min \left\{1,s_\lambda^{-1 }(1+r)\right\}^{M-K}\left(1 + s_\lambda^{-1}r\right)^{- N_1} \left(\frac{\min \left\{1,s_\lambda^{-1 }(1+r)\right\}}{1+s_\lambda^{-\alpha}r|
\sin(\varphi+\theta_\lambda)|}\right)^{K}.
\]
We need to show that
\begin{equation}\label{eq:freqest_help}
\frac{\min \left\{1,s_\lambda^{-1 }(1+r)\right\}}{1+s_\lambda^{-\alpha }r|\sin(\varphi+\theta_\lambda)|}\lesssim
\left(1 + s_\lambda^{(1-\alpha)}|\sin(\varphi + \theta_\lambda)|\right)^{-1}.
\end{equation}

In order to prove (\ref{eq:freqest_help}), we distinguish three cases:
\begin{itemize}
\item {\bf $ \mathbf{r \le 1}$: }For $r\le 1$ we have
\[
\frac{\min \left\{1,s_\lambda^{-1}(1+r)\right\}}{1+s_\lambda^{-\alpha }r| \sin(\varphi+\theta_\lambda)|}
\lesssim \min\left\{1,s_\lambda^{-1}\right\}
\lesssim \left(1 + s_\lambda^{(1-\alpha)}|\sin(\varphi + \theta_\lambda)|\right)^{-1}.
\]
\item {\bf $\mathbf{s_\lambda\le r}$: }  In this case we derive
\begin{eqnarray*}
\frac{\min \left\{1,s_\lambda^{-1}(1+r)\right\}}{1+s_\lambda^{-\alpha}r|\sin(\varphi+\theta_\lambda)|}
&= &\frac{1}{1+s_\lambda^{-\alpha}r|\sin(\varphi+\theta_\lambda)|}\le \frac{1}{1+s_\lambda^{-\alpha}s_\lambda|\sin(\varphi+\theta_\lambda)|}\\
&= &\left(1 + s_\lambda^{(1-\alpha)}|\sin(\varphi + \theta_\lambda)|\right)^{-1}.
\end{eqnarray*}
\end{itemize}

If $s_\lambda>1$ we have to examine a third case.
\begin{itemize}
\item {\bf $\mathbf{1<r<s_\lambda}$: }In this case we have
\[
\frac{\min \left\{1,s_\lambda^{-1}(1+r)\right\}}{1+s_\lambda^{-\alpha}r|\sin(\varphi+\theta_\lambda)|}
\leq\frac{s_\lambda^{-1}(1+r)}{1+s_\lambda^{-\alpha}r|\sin(\varphi+\theta_\lambda)|}
\leq\frac{1+r}{r}\frac{1}{\frac{s_\lambda}{r} + s_\lambda^{(1-\alpha)}|\sin(\varphi + \msch{\theta_\lambda})|}.
\]
\msch{Since $r>1$, we have $\frac{1+r}{r}< 2$, and since $r<s_\lambda$, also $\frac{s_\lambda}{r}>1$ holds.}
\end{itemize}

This proves the statement.
\end{proof}

The next lemma provides estimates for the inner product of two functions of the form $S_{\lambda,M,N_1,N_2}$.

\begin{lemma}\label{lem:freqangdec}
We assume $s_\lambda, s_\mu\ge c>0$ for all $\lambda\in\Lambda$ and $\mu\in\Delta$. For $A,B\ge1$ and
\[
N_1\geq A + \frac{1+\alpha}{2}, \quad N_2\geq B,\quad M > N_1-2,
\]
we have
\begin{eqnarray*}
\lefteqn{\hspace*{-1cm}(s_\lambda s_\mu)^{-\frac{1+\alpha}{2}}\int_{\mathbb{R}_+}\int_{\mathbb{T}}S_{\lambda,M,N_1,N_2}(r,\varphi)S_{\mu,M,N_1,N_2}(r,\varphi)rdr d\varphi}\\
&\hspace*{2cm} \lesssim& \max\Big\{ \frac{s_\lambda}{s_\mu},\frac{s_\mu}{s_\lambda} \Big\}^{-A}\left(1 + \min\{s_\lambda,s_\mu\}^{(1-\alpha)}|\theta_\lambda - \theta_{\mu}|\right)^{-B}.
\end{eqnarray*}
\end{lemma}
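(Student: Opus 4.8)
The goal is a bound on the quantity
\[
\Omega := (s_\lambda s_\mu)^{-\frac{1+\alpha}{2}}\int_{\mathbb{R}_+}\int_{\mathbb{T}}S_{\lambda,M,N_1,N_2}(r,\varphi)\,S_{\mu,M,N_1,N_2}(r,\varphi)\,r\,dr\,d\varphi,
\]
and the natural strategy is to separate the angular integration from the radial one. First I would integrate in $\varphi$ for fixed $r$. In $S_\lambda$ only the factor $(1+s_\lambda^{(1-\alpha)}|\sin(\varphi+\theta_\lambda)|)^{-N_2}$ depends on $\varphi$, and similarly for $S_\mu$; applying Lemma~\ref{lem:bumps} with $a=s_\lambda^{(1-\alpha)}$, $a'=s_\mu^{(1-\alpha)}$ (recall $|\theta_\lambda-\theta_\mu|\le\pi/2$ since orientations live in $\mathbb{T}$) yields
\[
\int_{\mathbb{T}}\bigl(1+s_\lambda^{(1-\alpha)}|\sin(\varphi+\theta_\lambda)|\bigr)^{-N_2}\bigl(1+s_\mu^{(1-\alpha)}|\sin(\varphi+\theta_\mu)|\bigr)^{-N_2}d\varphi
\lesssim \max\{s_\lambda,s_\mu\}^{-(1-\alpha)}\bigl(1+\min\{s_\lambda,s_\mu\}^{(1-\alpha)}|\theta_\lambda-\theta_\mu|\bigr)^{-N_2}.
\]
The angular factor $\bigl(1+\min\{s_\lambda,s_\mu\}^{(1-\alpha)}|\theta_\lambda-\theta_\mu|\bigr)^{-N_2}$ is independent of $r$, so it pulls out of the remaining radial integral; since $N_2\ge B$ this already supplies the claimed $(1+\min\{s_\lambda,s_\mu\}^{(1-\alpha)}|\theta_\lambda-\theta_\mu|\bigr)^{-B}$ factor (the residual power $N_2-B\ge 0$ is harmless, being bounded by $1$).

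**Radial estimate.** It then remains to bound
\[
R:= (s_\lambda s_\mu)^{-\frac{1+\alpha}{2}}\max\{s_\lambda,s_\mu\}^{-(1-\alpha)}\int_{\mathbb{R}_+}\min\{1,s_\lambda^{-1}(1+r)\}^{M}\min\{1,s_\mu^{-1}(1+r)\}^{M}\bigl(1+s_\lambda^{-1}r\bigr)^{-N_1}\bigl(1+s_\mu^{-1}r\bigr)^{-N_1}\,r\,dr
\]
by a constant times $\max\{s_\lambda/s_\mu,s_\mu/s_\lambda\}^{-A}$. By symmetry assume $s_\lambda\le s_\mu$, so $\max\{s_\lambda,s_\mu\}=s_\mu$ and $\max\{s_\lambda/s_\mu,s_\mu/s_\lambda\}=s_\mu/s_\lambda$. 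I would split the $r$-integral into the regimes $r\le s_\lambda$, $s_\lambda\le r\le s_\mu$, and $r\ge s_\mu$. On $\{r\le s_\lambda\}$ both $\min$-factors equal $1$ (up to constants, using $1+r\le 2s_\lambda$), the $\langle\cdot\rangle^{-N_1}$ factors are $\asymp 1$, and $\int_0^{s_\lambda} r\,dr\asymp s_\lambda^2$. On $\{s_\lambda\le r\le s_\mu\}$ the $\mu$-factors are still $\asymp 1$ while $\min\{1,s_\lambda^{-1}(1+r)\}^M(1+s_\lambda^{-1}r)^{-N_1}\asymp (s_\lambda^{-1}r)^{-N_1}$, giving $\int_{s_\lambda}^{s_\mu} s_\lambda^{N_1} r^{1-N_1}dr$; since $N_1>2$ this is $\lesssim s_\lambda^{N_1}\cdot s_\lambda^{2-N_1}=s_\lambda^2$. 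On $\{r\ge s_\mu\}$ one has the double decay $(s_\lambda^{-1}r)^{-N_1}(s_\mu^{-1}r)^{-N_1}$ against $r\,dr$, so $\int_{s_\mu}^\infty (s_\lambda s_\mu)^{N_1} r^{1-2N_1}dr\lesssim (s_\lambda s_\mu)^{N_1} s_\mu^{2-2N_1}$; I would bound $s_\mu^{2-2N_1}\le s_\lambda^{2-N_1}s_\mu^{-N_1}$ (valid since $s_\lambda\le s_\mu$ and $N_1-2\ge 0$), obtaining $\lesssim s_\lambda^2$ again. In all three regimes the integral is $\lesssim s_\lambda^2$, so
\[
R\lesssim (s_\lambda s_\mu)^{-\frac{1+\alpha}{2}}\,s_\mu^{-(1-\alpha)}\,s_\lambda^2
= s_\lambda^{\,2-\frac{1+\alpha}{2}}\,s_\mu^{-\frac{1+\alpha}{2}-(1-\alpha)}
= s_\lambda^{\frac{3-\alpha}{2}}\,s_\mu^{-\frac{3-\alpha}{2}}
= \Bigl(\frac{s_\mu}{s_\lambda}\Bigr)^{-\frac{3-\alpha}{2}}.
\]

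**The main obstacle, and finishing.** The delicate point is matching the exponent: the computation naturally produces the power $\frac{3-\alpha}{2}$, and one needs $\frac{3-\alpha}{2}\ge A$; but the hypothesis only gives $N_1\ge A+\frac{1+\alpha}{2}$ with $N_1>2$, i.e. $A\le N_1-\frac{1+\alpha}{2}$, and I must check that this together with the regime estimates truly delivers $A$ (not merely $\min\{A,\frac{3-\alpha}{2}\}$) powers of decay. The resolution is that one has freedom in how aggressively to use the $\langle s_\lambda^{-1}r\rangle^{-N_1}$ and $\langle s_\mu^{-1}r\rangle^{-N_1}$ factors: in each regime, instead of extracting the maximal decay, I extract exactly the power needed to produce $\max\{s_\lambda/s_\mu,s_\mu/s_\lambda\}^{-A}$ while keeping the $r$-integral convergent, which is possible precisely because $N_1-\tfrac{1+\alpha}{2}\ge A$ and $N_1>2$ (and, when $M<N_1$, one may first trade part of $N_1$ for the $\min$-factors using $M>N_1-2$ to retain convergence near $r=s_\lambda$). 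Concretely, in the regime $r\ge s_\mu$ I would write $(s_\mu^{-1}r)^{-N_1}=(s_\mu^{-1}r)^{-(N_1-A-\frac{1+\alpha}{2})}(s_\mu^{-1}r)^{-(A+\frac{1+\alpha}{2})}$, use the first piece (with $N_1-A-\frac{1+\alpha}{2}\ge 0$, and enough remaining $r$-integrability from the $s_\lambda$-factor since $N_1>2$) to keep the integral finite, and the second piece to generate the factor $(s_\lambda/s_\mu)$ to the right power after absorbing the $(s_\lambda s_\mu)^{-\frac{1+\alpha}{2}}$ prefactor; an analogous bookkeeping in the other two regimes. Assembling the angular bound with the radial bound $R\lesssim\max\{s_\lambda/s_\mu,s_\mu/s_\lambda\}^{-A}$ completes the proof. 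Throughout, the lower bound $s_\lambda,s_\mu\ge c>0$ is used to absorb additive constants such as $1+r\asymp\langle r\rangle$ and to treat $\min\{1,s_\lambda^{-1}(1+r)\}$ uniformly.
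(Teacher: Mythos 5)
Your angular step is exactly the paper's (integrate in $\varphi$ first via Lemma~\ref{lem:bumps}, pull out the factor $(1+\min\{s_\lambda,s_\mu\}^{1-\alpha}|\theta_\lambda-\theta_\mu|)^{-B}$ together with $\max\{s_\lambda,s_\mu\}^{-(1-\alpha)}$), and you have correctly located the difficulty: dropping the moment factors, the radial integral over $\{r\le s_\lambda\}$ is bounded \emph{below} by a constant times $s_\lambda^2$, since there every factor except $r\,dr$ is $\asymp 1$; hence that route can never give more than $(s_\mu/s_\lambda)^{-\frac{3-\alpha}{2}}$. The problem is that your proposed resolution does not work. On $\{r\le s_\lambda\}$ both $(1+s_\lambda^{-1}r)^{-N_1}$ and $(1+s_\mu^{-1}r)^{-N_1}$ are uniformly comparable to $1$, so no redistribution of their exponents can generate even a single power of $s_\lambda/s_\mu$ in that region. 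Since the lemma permits $A>\frac{3-\alpha}{2}$ (only $N_1\ge A+\frac{1+\alpha}{2}$ is assumed, and in the application $A=N$ is large), the argument as proposed fails, and the failure cannot be repaired by the bookkeeping you describe.

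The missing ingredient is the moment factor of the coarse-scale molecule: for $r\lesssim s_\mu$ one uses $\min\{1,s_\mu^{-1}(1+r)\}^M\le (s_\mu^{-1}(1+r))^M$, and this is the sole source of the additional decay in $s_\lambda/s_\mu$. On $\{r\le s_\lambda\}$ it contributes $(s_\lambda/s_\mu)^M$ (as $1+r\lesssim s_\lambda$ there, using $s_\lambda\ge c$), upgrading that piece to $(s_\lambda/s_\mu)^{M+\frac{3-\alpha}{2}}$, and $M+\frac{3-\alpha}{2}>N_1-\frac{1+\alpha}{2}\ge A$. On the intermediate regime the combination $(s_\mu^{-1}r)^M(s_\lambda^{-1}r)^{-N_1}$ integrates against $r\,dr$ to $s_\lambda^{N_1}s_\mu^{2-N_1}$ precisely because $M+1-N_1>-1$, i.e.\ $M>N_1-2$ --- this is the real role of that hypothesis (not convergence near $r=s_\lambda$) --- and yields $(s_\lambda/s_\mu)^{N_1-\frac{1+\alpha}{2}}\le(s_\lambda/s_\mu)^{A}$ there too; your tail regime already gives $(s_\lambda/s_\mu)^{N_1-\frac{1+\alpha}{2}}$ before your final (lossy) weakening to $s_\lambda^2$. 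A secondary issue: your regime computations assume $N_1>2$, which the hypotheses do not guarantee ($N_1\ge A+\frac{1+\alpha}{2}$ only forces $N_1\ge\frac{3}{2}$); the paper splits at $r=1$ and $r=\max\{1,s_\mu\}$ and invokes the moment factor in the middle regime exactly to avoid needing this.
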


\begin{proof}
We assume that $s_{\mu}\geq s_\lambda$ and start by proving the angular decay. By Lemma \ref{lem:bumps} and $N_2\geq B\geq 1$,
\begin{eqnarray*}
(s_\lambda s_\mu)^{-\frac{1+\alpha}{2}}\int_{\mathbb{R}_+}\int_{\mathbb{T}}S_{\lambda,M,N_1,N_2}(r,\varphi)S_{\mu,M,N_1,N_2}(r,\varphi)rdr d\varphi
&\lesssim &\mathcal{S}\cdot (\frac{s_\mu}{s_\lambda})^{\frac{1+\alpha}{2}}\left(1+s_\lambda^{(1-\alpha)}|\theta_\lambda - \theta_{\mu}|\right)^{-B},
\end{eqnarray*}
where
\begin{equation*}%\label{eq:schoens}
\mathcal{S}:=
s_\mu^{-2}\int_{\mathbb{R}_+}\min \left\{1,s_\lambda^{-1}(1+r)\right\}^{M}\min \left\{1,s_\mu^{-1}(1+r)\right\}^{M}
\left(1 + s_\lambda^{-1}r\right)^{-N_1}\left(1 + s_\mu^{-1}r\right)^{-N_1}rdr.
\end{equation*}

The remaining estimate
\begin{align}\label{eq:remainest}
\mathcal{S}\lesssim (s_\mu/s_\lambda)^{-\left(A+\frac{1+\alpha}{2}\right)}
\end{align}
is proved by splitting up the integral into the three parts $\mathcal{S}_i$, $i=1,2,3$, where the integration ranges over $0< r < 1$, $1\le r \le \max\{1,s_{\mu}\}$
and $\max\{1,s_{\mu}\}<r$, respectively.\\[1ex]
{\it Case 1} ($r< 1$):
For $\mathcal{S}_1$ we integrate over $0<r<1$. Here we use the moment property and $s_\lambda\ge c>0$ to estimate
\begin{eqnarray*}
\mathcal{S}_1
&\lesssim &s_\mu^{-2}\int_0^{1} s_\lambda^{-M}s_\mu^{-M} dr\\
& = &s_\mu^{-(2+M)}s_\lambda^{-M} \\
& \lesssim &s_\mu^{-(2+M)}s_\lambda^{M+2}\\
& =&(s_\mu/s_\lambda)^{-(M+2)} \\
&\le& (s_\mu/s_\lambda)^{-(A+\frac{1+\alpha}{2})}.
\end{eqnarray*}
{\it Case 2} ($1\le r \le \max\{1,s_{\mu}\}$):
If $s_\mu\le1$ then $\mathcal{S}_2=0$. For $s_\mu>1$ we estimate
\begin{eqnarray*}
\mathcal{S}_2
&\lesssim &s_\mu^{-2}\int_{1}^{s_{\mu}}\left(s_\mu^{-1}r\right)^M\left(s_\lambda^{-1}r\right)^{-N_1}rdr\\
&\le&s_\mu^{-(2+M)}s_\lambda^{N_1}\int_{0}^{s_{\mu}}r^{M+1-N_1}dr\\
&\lesssim &s_\mu^{-(2+M)}s_\lambda^{N_1}s_\mu^{(M+2-N_1)}\\
&=&(s_\mu/s_\lambda)^{-N_1}\\
&\le & (s_\mu/s_\lambda)^{-(A+\frac{1+\alpha}{2})}.
\end{eqnarray*}
{\it Case 3} ($\max\{1,s_{\mu}\}< r$):
For $\mathcal{S}_3$ we estimate
\begin{eqnarray*}
\mathcal{S}_3
&\lesssim &s_\mu^{-2}\int_{s_{\mu}}^\infty \left(s_\lambda^{-1}r\right)^{-N_1}\left(s_\mu^{-1}r\right)^{-N_1}r dr\\
&=&s_\mu^{-2}s_\mu^{N_1}s_\lambda^{N_1}\int_{s_{\mu}}^\infty r^{-2N_1+1}dr\\
&\lesssim&s_\mu^{-2}s_\mu^{N_1}s_\lambda^{N_1}s_\mu^{(-2N_1+2)}\\
&=&(s_\mu/s_\lambda)^{-N_1}\\
&\le& (s_\mu/s_\lambda)^{-(A+\frac{1+\alpha}{2})}.
\end{eqnarray*}

Altogether, this establishes \eqref{eq:remainest}.
\end{proof}

%-----------------------------------------
\subsubsection{Estimates with Differential Operator}\label{subsec:diffop}
%------------------------------------------

Finally, we require some estimates \msch{of the symmetric differential operator $\mathcal{L}$ (acting on the frequency variable $\xi$) }  defined by
\[
\mathcal{L} := I - s_0^{2\alpha}\Delta_\xi - \frac{s_0^{2}}{1 + s_0^{2(1-\alpha)}|\delta \theta|^2} \frac{\partial^2}{\partial \xi_1^2},
\]
which will be given by the second lemma. The first lemma will be required within its proof.

\begin{lemma}\label{lem:prodrule}
\msch{Given two functions $a^{(\lambda)}, b^{(\mu)}$ satisfying (\ref{eq:molcond}) for $L,M,N_1,N_2$, the expression}
\[
\mathcal{L}\left(\hat a^{(\lambda)}\left(A_{\alpha,s_\lambda^{-1}}R_{\theta_\lambda}\xi \right) \overline{\hat b^{(\mu)} \left(A_{\alpha,s_\mu^{-1}}R_{\theta_\mu}\xi\right)}\right)
\]
can be written as a finite linear combination of terms of the form
\[
\hat c^{(\lambda)}\left(A_{\alpha,s_\lambda^{-1}}R_{\theta_\lambda}\xi \right) \overline{\hat d^{(\mu)} \left(A_{\alpha,s_\mu^{-1}}R_{\theta_\mu}\xi\right)}
\]
with $c,d$ satisfying (\ref{eq:molcond}) for $L-2,M,N_1,N_2$.
\end{lemma}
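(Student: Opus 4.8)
The plan is to expand $\mathcal{L}(F\overline G)$ by the product rule, where I abbreviate $F(\xi):=\hat a^{(\lambda)}(A_{\alpha,s_\lambda^{-1}}R_{\theta_\lambda}\xi)$ and $G(\xi):=\hat b^{(\mu)}(A_{\alpha,s_\mu^{-1}}R_{\theta_\mu}\xi)$, and recall that $s_0=\min\{s_\lambda,s_\mu\}$, $\delta\theta=\theta_\lambda-\theta_\mu$ (so $|\delta\theta|\lesssim1$ since $\theta_\lambda,\theta_\mu\in\mathbb{T}$), and $\mathcal{L}=I-s_0^{2\alpha}\Delta_\xi-c_0\,\partial^2_{\xi_1}$ with $c_0:=s_0^2/(1+s_0^{2(1-\alpha)}|\delta\theta|^2)$. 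Since $\mathcal{L}$ is a constant-coefficient differential operator of order two, the Leibniz rule expresses $\mathcal{L}(F\overline G)$ as a finite linear combination of terms $(\partial^\beta F)(\overline{\partial^\gamma G})$ with $|\beta|+|\gamma|\le2$, each carrying a scalar prefactor that is (up to a combinatorial $1$ or $2$) one of $1$, $-s_0^{2\alpha}$, $-c_0$.

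Next I would resolve these derivatives by the chain rule. Writing $F=\hat a^{(\lambda)}\circ T_\lambda$ with the linear map $T_\lambda:=A_{\alpha,s_\lambda^{-1}}R_{\theta_\lambda}$, one gets $\partial^\beta F=\sum_{|\beta'|=|\beta|}q^{(\lambda)}_{\beta,\beta'}\big((\partial^{\beta'}\hat a^{(\lambda)})\circ T_\lambda\big)$, where each $q^{(\lambda)}_{\beta,\beta'}$ is a product of $|\beta|\le2$ entries of $T_\lambda$; analogously for $\partial^\gamma G$ with $T_\mu:=A_{\alpha,s_\mu^{-1}}R_{\theta_\mu}$. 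Substituting, $\mathcal{L}(F\overline G)$ becomes a finite sum of terms $\varrho\,\big((\partial^{\beta'}\hat a^{(\lambda)})\circ T_\lambda\big)\overline{\big((\partial^{\gamma'}\hat b^{(\mu)})\circ T_\mu\big)}$, where $\varrho$ is the scalar prefactor times the $q$-factors. For each term I would split $\varrho=\nu_\lambda\nu_\mu$ with $\nu_\lambda$ built from the $\lambda$-entries (and part of the $s_0$- or $c_0$-prefactor) and $\nu_\mu$ from the $\mu$-entries, and then set $\hat c^{(\lambda)}:=\nu_\lambda\,\partial^{\beta'}\hat a^{(\lambda)}$, $\hat d^{(\mu)}:=\nu_\mu\,\partial^{\gamma'}\hat b^{(\mu)}$; this puts each term in the required form $\hat c^{(\lambda)}(A_{\alpha,s_\lambda^{-1}}R_{\theta_\lambda}\xi)\,\overline{\hat d^{(\mu)}(A_{\alpha,s_\mu^{-1}}R_{\theta_\mu}\xi)}$.

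Two points then remain. First, that $\hat c^{(\lambda)},\hat d^{(\mu)}$ obey \eqref{eq:molcond} of order $(L-2,M,N_1,N_2)$: this is immediate, because $\partial^{\beta'}\hat a^{(\lambda)}$ is a derivative of order $|\beta'|\le2$ of a function satisfying \eqref{eq:molcond} of order $(L,M,N_1,N_2)$, hence satisfies \eqref{eq:molcond} of order $(L-|\beta'|,M,N_1,N_2)\supseteq(L-2,M,N_1,N_2)$, and multiplication by a scalar that is bounded uniformly in the indices only changes the implicit constant, uniformly. Second — and this is the real obstacle — that the split scalars $\nu_\lambda,\nu_\mu$ can in fact be chosen uniformly bounded over all $(\lambda,\mu)$. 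Here one uses that the entries of $T_\lambda$ are bounded by $s_\lambda^{-1}$ and $s_\lambda^{-\alpha}$ (likewise $T_\mu$ by $s_\mu^{-1},s_\mu^{-\alpha}$), that $s_0=\min\{s_\lambda,s_\mu\}$ and $s_\lambda,s_\mu\ge c$, and that $\alpha\in[0,1]$. For the $I$-term there is nothing to split. For the $s_0^{2\alpha}\Delta$-terms one distributes $s_0^{2\alpha}$ among the $\le2$ arising matrix entries, each product $s_0^\alpha\cdot(\text{entry})$ being bounded since $s_0^\alpha s_\lambda^{-1}\le s_\lambda^{\alpha-1}\le c^{\alpha-1}$ and $s_0^\alpha s_\lambda^{-\alpha}\le1$. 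For the $c_0\,\partial^2_{\xi_1}$-terms one uses the trivial bound $c_0\le s_0^2$ together with the elementary inequality $1+t^2\ge2t$ applied to $t=s_0^{1-\alpha}|\delta\theta|$; since the prefactor $c_0$ is precisely the anisotropic weight matched to the $\xi_1$-direction, the $s^{-1}$-type factors (and any residual angular factors bounded by powers of $|\delta\theta|$) brought down by $\partial_{\xi_1}$ get absorbed. Carrying out this short finite case check on the scalar prefactors is the whole substance of the argument; the rest is bookkeeping.
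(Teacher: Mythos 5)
Your proposal is correct and follows essentially the same route as the paper: expand $\mathcal{L}(F\overline{G})$ by the Leibniz and chain rules, reassemble each term as $\hat c^{(\lambda)}(A_{\alpha,s_\lambda^{-1}}R_{\theta_\lambda}\xi)\,\overline{\hat d^{(\mu)}(A_{\alpha,s_\mu^{-1}}R_{\theta_\mu}\xi)}$, and check that the scalar coefficients are uniformly bounded, the only delicate point being the absorption of the $\sin(\theta_\mu)$-factors by the weight $c_0$, which your AM--GM bound $1+t^2\ge 2t$ handles exactly as the paper's two-case distinction $|\delta\theta|\lessgtr s_0^{-(1-\alpha)}$ does. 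The one point to make explicit is that the $\partial^2_{\xi_1}$-term must be understood as the second derivative in the direction $e_\lambda$ (i.e., one normalizes $\theta_\lambda=0$, as the paper remarks); otherwise the $\lambda$-side chain-rule coefficient $s_\lambda^{-\alpha}\sin(\theta_\lambda)$ is not controlled by $c_0$, since $c_0$ only compensates angular factors of size $|\delta\theta|$.
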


\begin{proof}
To prove the claim we treat the three summands of the operator $\mathcal{L}$ separately. The first part is the identity, and therefore
the statement is trivial. To handle the second part, the frequency Laplacian $s_0^{2\alpha }\Delta$, we use the product rule
\[
\Delta(fg) = 2\left(\partial^{(1,0)}f\partial^{(1,0)}g + \partial^{(0,1)}f\partial^{(0,1)}g\right) + (\Delta f)g + f(\Delta g).
\]
Therefore we need to estimate the derivatives of degree $1$ and the Laplacians of the two factors in the product
\[
\hat a^{(\lambda)}\left(A_{\alpha,s_\lambda^{-1}}R_{\theta_\lambda}\xi \right)\overline{\hat b^{(\mu)}\left(A_{\alpha,s_\mu^{-1}}R_{\theta_\mu}\xi\right)}=:A(\xi)B(\xi).
\]

For this, we start with the first factor,
\[
A(\xi)=\hat a^{(\lambda)}\left(s_\lambda^{-1}\cos(\theta_\lambda)\xi_1- s_\lambda^{-1}\sin(\theta_\lambda)\xi_2 ,
s_\lambda^{-\alpha }\sin(\theta_\lambda)\xi_1+ s_\lambda^{-\alpha }\cos(\theta_\lambda)\xi_2\right).
\]
Set
\[
A_1(\xi):=\partial^{(1,0)}\hat a^{(\lambda)}\left(A_{\alpha,s_\lambda^{-1}}R_{\theta_\lambda}\xi\right)\ \mbox{ and }\
 A_2(\xi):=\partial^{(0,1)}\hat a^{(\lambda)}\left(A_{\alpha,s_\lambda^{-1}}R_{\theta_\lambda}\xi\right).
\]
By definition, the functions $A_1,\ A_2$ satisfy (\ref{eq:molcond}) with $L$ replaced by $L-1$. An application of the chain rule shows that
\[
\partial^{(1,0)}A(\xi)=s_\lambda^{-1}\cos(\theta_\lambda)A_1(\xi)+ s_\lambda^{-\alpha }\sin(\theta_\lambda)A_2(\xi).
\]
Analogously, one can compute
\[
\partial^{(0,1)}A(\xi)=-s_\lambda^{-1}\sin(\theta_\lambda)A_1(\xi)+ s_\lambda^{-\alpha }\cos(\theta_\lambda)A_2(\xi),
\]
and the exact same expressions for $B$ using the obvious definitions for $B_1$ and $B_2$. We get
\begin{eqnarray*}
\partial^{(1,0)}A\partial^{(1,0)}B
&=&(s_\lambda s_\mu)^{-1}\cos(\theta_\lambda)\cos(\theta_\mu)A_1B_1+s_\lambda^{-\alpha}s_\mu^{-1}\sin(\theta_\lambda)\cos(\theta_\mu)A_2B_1\\
&&+s_\mu^{-\alpha}s_\lambda^{-1}\sin(\theta_\mu)\cos(\theta_\lambda)A_1B_2+ (s_\lambda s_\mu)^{-\alpha}\sin(\theta_\lambda)\sin(\theta_\mu)A_2B_2.
\end{eqnarray*}
It follows that $s_0^{2\alpha }\partial^{(1,0)}A\partial^{(1,0)}B$ can be written as a linear combination as claimed (recall that $s_0 = \min\{s_\lambda,s_\mu\})$.
The same argument applies to the product $s_0^{2\alpha }\partial^{(0,1)}A\partial^{(0,1)}B$.

It remains to consider the factor
\[
(\Delta A)B + A(\Delta B),
\]
where, for symmetry reasons, we only treat the summand $(\Delta A)B$. In fact, it suffices to only consider
\[
(\partial^{(2,0)}A)B
 =\left(s_\lambda^{-2}\cos(\theta_\lambda)^2A_{11}+2s_\lambda^{-(1+\alpha)}\sin(\theta_\lambda)\cos(\theta_\lambda)A_{12} - s_\lambda^{-2\alpha }\sin(\theta_\lambda)^2A_{22}\right)B
\]
with $A_{ij}$ defined in an obvious way, satisfying (\ref{eq:molcond}) with $L$ replaced by $L-2$. The term $(\partial^{(0,2)}A)B$, and hence
$(\Delta A)B$, can be handled in the same way, as can $A(\Delta B)$. This takes care of the term $s_0^{2\alpha }\Delta$ in the definition of
$\mathcal{L}$.

Finally, we need to handle the last term in the definition of $\mathcal{L}$, namely
\[
\frac{s_0^{2}}{1 + s_0^{2(1-\alpha)}| \theta_\mu|^2} \frac{\partial^2}{\partial \xi_1^2}
\]
for $\theta_\lambda = 0$ (otherwise the second order derivative would be in the direction of the unit vector with angle $\theta_\lambda$ with
obvious modifications in the proof). With our notation and using the product rule we need to consider terms of the form
\[
\left(\partial^{(2,0)}A\right) B,\quad \left(\partial^{(1,0)}A\right)\left(\partial^{(1,0)} B\right),\quad A\left(\partial^{(2,0)} B\right),
\]
and show that each of them, multiplied by the factor $s_0^{2}/(1 + s_0^{2(1-\alpha)}|\theta_\mu|^2)$, satisfies the desired representation.

Let us start with $\left(\partial^{(2,0)}A\right) B$, which, using the fact that $\sin(\theta_\lambda) = 0$, can be written as
\[
\left(\partial^{(2,0)}A\right) B = s_\lambda^{-2}A_{11}B,
\]
and which clearly satisfies the desired assertion.

Now consider the expression $\left(\partial^{(1,0)}A\right)\left(\partial^{(1,0)} B\right)$,
which can be written as
\[
\left(\partial^{(1,0)}A\right)\left(\partial^{(1,0)} B\right) = s_\lambda^{-1}s_\mu^{-1}\cos(\theta_\mu)A_1B_1+ s_\lambda^{-1}s_\mu^{-\alpha }\sin(\theta_\mu)A_1B_2.
\]
The first summand in this expression clearly causes no problems. To handle the second term we need to show that
\begin{equation}\label{eq:diffopbound}
\frac{s_0^{2}}{1 + s_0^{2(1-\alpha)}|\theta_\mu|^2} s_\lambda^{-1}s_\mu^{-\alpha }\sin(\theta_\mu) \lesssim 1.
\end{equation}
Here we have to distinguish two cases. First, assume that $|\theta_\mu|\le s_0^{-(1-\alpha)}$. Then we can estimate
$\sin(\theta_\mu)\lesssim s_0^{-(1-\alpha)}$, which readily yields the desired bound for (\ref{eq:diffopbound}).
For the case $|\theta_\mu|\geq s_0^{-(1-\alpha) }$ we estimate
\[
\frac{s_0^{2}}{1 + s_0^{2(1-\alpha)}|\theta_\mu|^2}s_\lambda^{-1}s_\mu^{-\alpha }\sin(\theta_\mu)
\lesssim\frac{s_0^{2}}{1 + s_0^{(1-\alpha)}|\theta_\mu|}s_0^{-1}s_0^{-\alpha }|\theta_\mu|
\le\frac{s_0^{2}}{s_0^{(1-\alpha)}|\theta_\mu|}s_0^{-1}s_0^{-\alpha }|\theta_\mu| = 1
\]
which proves (\ref{eq:diffopbound}) also for this case.

We are left with estimating the term $A\left(\partial^{(2,0)} B\right)$, which can be written as
\[
s_\mu^{-2}\cos(\theta_\mu)^2 A B_{11}+ 2s_\mu^{-(1+\alpha)}\sin(\theta_\mu)\cos(\theta_\mu)AB_{12}+ s_\mu^{-2\alpha }\sin(\theta_\mu)^2A B_{22}.
\]
The first two terms are of a form already treated, and the last term can be handled using the fact that $\sin(\theta_\mu)^2\le \theta_\mu^2$.
\end{proof}

\begin{lemma}\label{lem:diffopprod}
Assume that the assumptions of Theorem~\ref{thm:almostorth} hold for two systems of $\alpha$-molecules of order $(L,M,N_1,N_2)$ \ms{with
respective generating functions $(a^{(\lambda)})_{\lambda}$ and $(b^{(\mu)})_\mu$}. Then we have
\[
\mathcal{L}^k\left(\hat a^{(\lambda)}\left(A_{\alpha,s_\lambda^{-1}}R_{\theta_\lambda}\xi \right)\overline{\hat b^{(\mu)}\left(A_{\alpha,s_\mu^{-1}}R_{\theta_\mu}\xi\right)}\right)
\lesssim S_{\lambda,M-N_2,N_1,N_2}(\xi)S_{\mu,M-N_2,N_1,N_2}(\xi)
\]
for all $k\le L/2$.
\end{lemma}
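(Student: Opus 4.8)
The plan is to combine the algebraic reduction furnished by Lemma~\ref{lem:prodrule} with the pointwise decay estimate of Lemma~\ref{lem:polarest}. The operator $\mathcal{L}$ is fixed once $\lambda\in\Lambda$ and $\mu\in\Delta$ are fixed, so I would simply iterate: each application of $\mathcal{L}$ turns a finite linear combination of products $\hat a^{(\lambda)}\big(A_{\alpha,s_\lambda^{-1}}R_{\theta_\lambda}\xi\big)\overline{\hat b^{(\mu)}\big(A_{\alpha,s_\mu^{-1}}R_{\theta_\mu}\xi\big)}$, with generators obeying \eqref{eq:molcond} of order $(L',M,N_1,N_2)$, into one of exactly the same shape with $L'$ replaced by $L'-2$, and with coefficients bounded uniformly in $\lambda,\mu$ — this uniformity being precisely what the proof of Lemma~\ref{lem:prodrule} establishes, exploiting $s_0=\min\{s_\lambda,s_\mu\}$ in the normalization of $\mathcal{L}$. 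Carrying this out $k$ times is legitimate as long as $L-2k\ge0$, i.e.\ $k\le L/2$, and yields that $\mathcal{L}^k\big(\hat a^{(\lambda)}(A_{\alpha,s_\lambda^{-1}}R_{\theta_\lambda}\xi)\overline{\hat b^{(\mu)}(A_{\alpha,s_\mu^{-1}}R_{\theta_\mu}\xi)}\big)$ is a finite linear combination, with uniformly bounded coefficients and a number of terms depending only on $k$ and $\alpha$, of products $\hat c^{(\lambda)}\big(A_{\alpha,s_\lambda^{-1}}R_{\theta_\lambda}\xi\big)\overline{\hat d^{(\mu)}\big(A_{\alpha,s_\mu^{-1}}R_{\theta_\mu}\xi\big)}$ whose generators $c^{(\lambda)},d^{(\mu)}$ still satisfy \eqref{eq:molcond} (in particular for the multi-index $\rho=0$, since $L-2k\ge0$). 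Hence it suffices to bound a single factor $\big|\hat c^{(\lambda)}(A_{\alpha,s_\lambda^{-1}}R_{\theta_\lambda}\xi)\big|$ by $S_{\lambda,M-N_2,N_1,N_2}(\xi)$, and symmetrically for $d^{(\mu)}$.

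For this I would pass to polar coordinates $\xi=r(\cos\varphi,\sin\varphi)$ and set $\zeta:=A_{\alpha,s_\lambda^{-1}}R_{\theta_\lambda}\xi$, so that $\zeta_1=s_\lambda^{-1}r\cos(\varphi+\theta_\lambda)$ and $\zeta_2=s_\lambda^{-\alpha}r\sin(\varphi+\theta_\lambda)$. Inserting this into \eqref{eq:molcond} with $\rho=0$ and using $|\cos(\varphi+\theta_\lambda)|+|\sin(\varphi+\theta_\lambda)|\in[1,\sqrt2]$ gives $s_\lambda^{-1}+|\zeta_1|+s_\lambda^{-(1-\alpha)}|\zeta_2|\asymp s_\lambda^{-1}(1+r)$, so the moment factor becomes $\min\{1,s_\lambda^{-1}(1+r)\}^{M}$. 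Next, from $|\zeta|^2=s_\lambda^{-2}r^2\cos^2(\varphi+\theta_\lambda)+s_\lambda^{-2\alpha}r^2\sin^2(\varphi+\theta_\lambda)$ and $s_\lambda^{-2\alpha}=s_\lambda^{2(1-\alpha)}s_\lambda^{-2}\gtrsim s_\lambda^{-2}$ — here using $\alpha\le1$ together with $s_\lambda\ge c$, whence $s_\lambda^{2(1-\alpha)}\ge\min\{1,c^{2}\}$ — one obtains $\langle|\xi|\rangle$ evaluated at $\zeta$, namely $\langle|\zeta|\rangle\gtrsim 1+s_\lambda^{-1}r$; and trivially $\langle\zeta_2\rangle\asymp 1+s_\lambda^{-\alpha}r|\sin(\varphi+\theta_\lambda)|$. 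Combining these three observations yields
\[
\big|\hat c^{(\lambda)}(\zeta)\big|\lesssim \min\{1,s_\lambda^{-1}(1+r)\}^{M}\,(1+s_\lambda^{-1}r)^{-N_1}\,(1+s_\lambda^{-\alpha}r|\sin(\varphi+\theta_\lambda)|)^{-N_2},
\]
and the right-hand side is $\lesssim S_{\lambda,M-N_2,N_1,N_2}(\xi)$ by Lemma~\ref{lem:polarest} applied with $K=N_2$. The identical argument, with $\lambda$ replaced by $\mu$ and $c^{(\lambda)}$ by $d^{(\mu)}$, gives $\big|\hat d^{(\mu)}(A_{\alpha,s_\mu^{-1}}R_{\theta_\mu}\xi)\big|\lesssim S_{\mu,M-N_2,N_1,N_2}(\xi)$.

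Multiplying these two bounds and summing over the finitely many terms of the linear combination then produces the claimed estimate $\big|\mathcal{L}^k\big(\hat a^{(\lambda)}(A_{\alpha,s_\lambda^{-1}}R_{\theta_\lambda}\xi)\overline{\hat b^{(\mu)}(A_{\alpha,s_\mu^{-1}}R_{\theta_\mu}\xi)}\big)\big|\lesssim S_{\lambda,M-N_2,N_1,N_2}(\xi)\,S_{\mu,M-N_2,N_1,N_2}(\xi)$ for all $k\le L/2$, with the implicit constant uniform over $\lambda\in\Lambda$, $\mu\in\Delta$.

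The step I would be most careful about is the uniformity of constants in $\lambda$ and $\mu$. The coefficients generated by the successive applications of Lemma~\ref{lem:prodrule} depend on $s_\lambda,s_\mu,\theta_\lambda,\theta_\mu$, and one must appeal to the bounds in that lemma's proof (which rely on the $s_0$-normalization of $\mathcal{L}$) to see that they remain bounded; likewise, the estimate $\langle|\zeta|\rangle\gtrsim 1+s_\lambda^{-1}r$ degenerates without the hypothesis $s_\lambda\ge c$, so that is exactly where this assumption of Theorem~\ref{thm:almostorth} is used. The remainder is a routine matching of the Cartesian decay profile in \eqref{eq:molcond} against the polar profile $S_{\lambda,M,N_1,N_2}$, already packaged in Lemma~\ref{lem:polarest}.
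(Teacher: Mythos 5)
Your proposal is correct and follows essentially the same route as the paper: iterate Lemma~\ref{lem:prodrule} $k\le L/2$ times to reduce $\mathcal{L}^k$ of the product to a finite linear combination of products of generators still satisfying \eqref{eq:molcond}, convert the Cartesian decay profile to polar coordinates, and apply Lemma~\ref{lem:polarest} with $K=N_2$. The only difference is cosmetic — you carry out the polar-coordinate conversion explicitly where the paper refers to \eqref{eq:moldecaypolar2}, and you apply Lemma~\ref{lem:polarest} factorwise rather than to the product, which amounts to the same thing.
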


\begin{proof}
We show that
\begin{eqnarray}\nonumber
\lefteqn{\left|\mathcal{L}^k\left(\hat a^{(\lambda)}\left(A_{\alpha,s_\lambda^{-1}}R_{\theta_\lambda} \xi \right)\overline{\hat b^{(\mu)}
\left(A_{\alpha,s_\mu^{-1}}R_{\theta_\mu}\xi\right)}\right)\right|}\\ \nonumber
& \lesssim & \min \left\{1,s_\lambda^{-1}(1+r)\right\}^{M} \left(1 + s_\lambda^{-1}r \right)^{-N_1} \left(1 + s_\lambda^{-\alpha}r |\sin(\varphi + \theta_\lambda)|\right)^{-N_2}\\
\label{eq:product}
& & \cdot \min \left\{1,s_\mu^{-1 }(1+r)\right\}^{M} \left(1 + s_\mu^{-1}r \right)^{-N_1} \left(1 + s_\mu^{-\alpha }r |\sin(\varphi + \theta_\mu)|\right)^{-N_2}
\end{eqnarray}
which, using Lemma \ref{lem:polarest} with $K=N_2$, implies the desired statement.

To prove (\ref{eq:product}), we use induction in $k$, namely we show that if we have two functions $a^{(\lambda)}, b^{(\mu)}$ satisfying
(\ref{eq:molcond}) for $L,M,N_1,N_2$, then the expression
\[
\mathcal{L}\left(\hat a^{(\lambda)}\left(A_{\alpha,s_\lambda^{-1}}R_{\theta_\lambda}\xi \right)\overline{\hat b^{(\mu)}\left(A_{\alpha,s_\mu^{-1}}R_{\theta_\mu}\xi\right)}\right)
\]
can be written as a finite linear combination of terms of the form
\[
\hat c^{(\lambda)}\left(A_{\alpha,s_\lambda^{-1}}R_{\theta_\lambda}\xi \right) \overline{\hat d^{(\mu)}\left(A_{\alpha,s_\mu^{-1}}R_{\theta_\mu}\xi\right)}
\]
with $c,d$ satisfying (\ref{eq:molcond}) and $L$ replaced by $L-2$, see Lemma \ref{lem:prodrule}. Iterating this argument we can establish that for $k\le L/2$
\begin{equation}\label{eq:estt}
\mathcal{L}^k\left(\hat a^{(\lambda)}\left(A_{\alpha,s_\lambda^{-1}}R_{\theta_\lambda}\xi \right)\overline{\hat b^{(\mu)}\left(A_{\alpha,s_\mu^{-1}}R_{\theta_\mu}\xi\right)}\right)
\end{equation}
can be expressed as a finite linear combination of terms of the form
\begin{equation}\label{eq:product1}
\hat c^{(\lambda)}\left(A_{\alpha,s_\lambda^{-1}}R_{\theta_\lambda}\xi \right) \overline{\hat d^{(\mu)}\left(A_{\alpha,s_\mu^{-1}}R_{\theta_\mu}\xi\right)}
\end{equation}
with
\begin{equation}\label{eq:product2}
\left| \hat c^{(\lambda)}(\xi)\right|
\lesssim \min\left\{1,s_\lambda^{-1} + |\xi_1| + s_\lambda^{-(1-\alpha)}|\xi_2|\right\}^M\left\langle |\xi|\right\rangle^{-N_1} \langle \xi_2 \rangle^{-N_2},
\end{equation}
and an analogous estimate for $d^{(\mu)}$. Combining (\ref{eq:product1}) and (\ref{eq:product2}), we obtain that $|(\ref{eq:estt})|$
can --~up to a constant~-- be upperbounded by the product of
\[
\min \left\{1,s_\lambda^{-1}+\left|\left(A_{\alpha,s_\lambda^{-1}}R_{\theta_\lambda}\xi\right)_1 \right|
+s_\lambda^{-(1-\alpha) }\left|\left(A_{\alpha,s_\lambda^{-1}}R_{\theta_\lambda}\xi \right)_2 \right|\right\}^M
\left\langle \left |A_{\alpha,s_\lambda^{-1}}R_{\theta_\lambda}\xi\right|\right\rangle^{-N_1}
 \msch{\left\langle \left(A_{\alpha,s_\lambda^{-1}}R_{\theta_\lambda}\xi \right)_2 \right\rangle^{-N_2}}
\]
and
\[\min \left\{1,s_\mu^{-1}+\left|\left(A_{\alpha,s_\mu^{-1}}R_{\theta_\mu}\xi\right)_1 \right|
+s_\mu^{-(1-\alpha) }\left|\left(A_{\alpha,s_\mu^{-1}}R_{\theta_\mu}\xi \right)_2 \right|\right\}^M \left\langle \left |A_{\alpha,s_\mu^{-1}}R_{\theta_\mu}\xi\right |\right\rangle^{-N_1}
 \msch{\left\langle \left(A_{\alpha,s_\mu^{-1}}R_{\theta_\mu}\xi \right)_2 \right\rangle^{-N_2}}.
\]
Transforming this inequality into polar coordinates as in (\ref{eq:moldecaypolar2}) yields (\ref{eq:product}). This finishes the proof.
\end{proof}

%-----------------------------------------
\subsubsection{Actual Proof}\label{sec:almostorth}
%------------------------------------------

We now have all the ingredients to prove Theorem \ref{thm:almostorth}. By our assumptions on $(L,M,N_1,N_2)$, there exist $\widetilde{N}_1$ and
$\widetilde{N}_2$ such that $N_1\ge\widetilde{N}_1\ge N+\frac{1+\alpha}{2}$ and $N_2\ge\widetilde{N}_2\ge N+\frac{1+\alpha}{2}$ and
$M> \widetilde{N}_1+\widetilde{N}_2-2$. The systems $(m_\lambda)_\lambda$ and $(p_\mu)_\mu$ are also $\alpha$-molecules of order
$(L,M,\widetilde{N}_1,\widetilde{N}_2)$, satisfying the assumptions of the Theorem. Thus, we can without loss of generality assume the additional
condition $M> N_1+N_2-2$.

To keep the notation simple, we assume that $\theta_\lambda = 0$ and define $s_0:=\min\{s_\lambda , s_{\mu}\}$. Further, we set
\[
\delta x := x_\lambda - x_\mu,\quad \delta \theta:= \theta_\lambda - \theta_\mu.
\]
By definition, we can write
\[
m_\lambda(\cdot) = s_\lambda^{\frac{1+\alpha}{2}}a^{(\lambda)}\left(A_{\alpha,s_\lambda}R_{\theta_\lambda}(\cdot - x_\lambda)\right),
\quad
p_\mu(\cdot) = s_\mu^{\frac{1+\alpha}{2}}b^{(\mu)}\left(A_{\alpha,s_\mu}R_{\theta_\mu} (\cdot - x_\mu)\right),
\]
where both $a^{(\lambda)}$ and $b^{(\mu)}$ satisfy (\ref{eq:molcond}). We have the equality
\begin{eqnarray}\nonumber
\left\langle m_\lambda,p_{\mu}\right\rangle \hspace*{-0.25cm}
& = & \hspace*{-0.25cm} \left\langle \hat m_\lambda,\hat p_{\mu}\right\rangle\\
& = &\hspace*{-0.25cm} (s_\lambda s_\mu)^{-\frac{1+\alpha}{2}}\int_{\mathbb{R}^2} \hat a^{(\lambda)}\left(A_{\alpha,s_\lambda^{-1}}R_{\theta_\lambda}\xi \right) \overline{\hat b^{(\mu)}
\left(A_{\alpha,s_\mu^{-1}}R_{\theta_\mu}\xi\right)}\exp\left(-2\pi i \xi \cdot \delta x\right) d\xi \nonumber \\ \label{eq:parint1}
& = &  \hspace*{-0.25cm} (s_\lambda s_\mu)^{-\frac{1+\alpha}{2}}\int_{\mathbb{R}^2}\mathcal{L}^k\left(\hat a^{(\lambda)}\left(A_{\alpha,s_\lambda^{-1}}R_{\theta_\lambda}\xi \right) \overline{\hat b^{(\mu)}
\left(A_{\alpha,s_\mu^{-1}}R_{\theta_\mu}\xi\right)}\right)\mathcal{L}^{-k}\left(\exp\left(-2\pi i \xi \cdot \delta x\right)\right)d\xi,
\end{eqnarray}
where $\mathcal{L}$ is the symmetric differential operator (acting on the frequency variable) defined by
\[
\mathcal{L} := I - s_0^{2\alpha}\Delta_\xi - \frac{s_0^{2}}{1 + s_0^{2(1-\alpha)}|\delta \theta|^2} \frac{\partial^2}{\partial \xi_1^2}.
\]
We have
\begin{equation}\label{eq:parint2}
\mathcal{L}^{-k}\left(\exp\left(-2\pi i \xi \cdot \delta x\right)\right)
= \left(1 + s_0^{2\alpha }|\delta x|^2 + \frac{s_0^{2}}{1 + s_0^{2(1-\alpha)}|\delta \theta|}\langle e_\lambda , \delta x\rangle^2\right)^{-k}\exp\left(-2\pi i \xi \cdot \delta x\right),
\end{equation}
where $e_\lambda$ denotes the unit vector pointing in the direction described by the angle $\theta_\lambda$.
By Lemma \ref{lem:diffopprod} and for $k\le \frac{L}{2}$, we have the inequality
\[
\mathcal{L}^k\left(\hat a^{(\lambda)}\left(A_{\alpha,s_\lambda^{-1}}R_{\theta_\lambda}\xi \right)\overline{\hat b^{(\mu)}
\left(A_{\alpha,s_\mu^{-1}}R_{\theta_\mu}\xi\right)}\right)\lesssim S_{\lambda,M-N_2,N_1,N_2}(\xi)S_{\mu,M-N_2,N_1,N_2}(\xi).
\]
Then, by (\ref{eq:parint1}) and (\ref{eq:parint2}) it follows that
\begin{eqnarray*}
\lefteqn{\left|\langle m_\lambda , p_\mu \rangle \right|}\\
& \lesssim & (s_\lambda s_\mu)^{-\frac{1+\alpha}{2}}\hspace*{-0.2cm}\int_{\mathbb{R}^2}S_{\lambda,M-N_2,N_1,N_2}(\xi)S_{\mu,M-N_2,N_1,N_2}(\xi)d\xi
\left(1 + s_0^{2\alpha }|\delta x|^2 + \frac{s_0^{2}}{1 + s_0^{2(1-\alpha)}|\delta \theta|}\langle e_\lambda , \delta x\rangle^2\right)^{-k}
\end{eqnarray*}
for all $k\le \frac{L}{2}$. Now we can use Lemma \ref{lem:freqangdec} and the fact that $L\geq 2N$ to establish that
\begin{eqnarray*}
\left|\langle m_\lambda , p_\mu \rangle \right|
&\lesssim & \max\Big\{\frac{s_\lambda}{s_\mu},\frac{s_\lambda}{s_\mu}\Big\}^{-N}\left(1 + s_0^{2(1-\alpha)}|\delta\theta|^2 \right)^{-N}
\left(1 + s_0^{2\alpha }|\delta x|^2 + \frac{s_0^{2}}{1 + s_0^{2(1-\alpha)}|\delta \theta|}\langle e_\lambda , \delta x\rangle^2\right)^{-N}\\
&\le & \max\Big\{\frac{s_\lambda}{s_\mu},\frac{s_\lambda}{s_\mu}\Big\}^{-N}\left(1 + s_0^{2(1-\alpha)}|\delta\theta|^2+
s_0^{2\alpha }|\delta x|^2 + \frac{s_0^{2}}{1 + s_0^{2(1-\alpha)}|\delta \theta|}\langle e_\lambda , \delta x\rangle^2\right)^{-N}\\
& = &\omega_{\alpha}(\lambda,\mu)^{-N}.
\end{eqnarray*}
This proves the desired statement.

%-----------------------------------------
\subsection{Proofs of Section \ref{sec:approx}}
%------------------------------------------

%-----------------------------------------
\subsubsection{Proof of Lemma \ref{lem:decayapprox}} \label{subsec:proofdecayapprox}
%------------------------------------------

Let $(c^\ast_n)_{n\in\N}$ be a non-increasing rearrangement of the expansion coefficients $(c_\lambda)_\lambda\in \omega\ell^{2/(p+1)}(\Lambda)$ and
let $(m^\ast_n)_{n\in\N}$ be the accordingly reordered frame.
Then because of
\[
\sup_{n>0} n^{\frac{1}{p}} |c^\ast_n| \le \|(c_\lambda)_\lambda\|_{\omega\ell^p},
\]
we have $|c^*_n|\lesssim n^{-\frac{p+1}{2}}$, or equivalently $|c^*_n|^2\lesssim n^{-p-1}$,
for $n\in\N$. Summation yields
\[
\sum_{n=N+1}^\infty |c^*_n|^2 \lesssim \sum_{n=N+1}^\infty n^{-p-1}\le \int_{N}^\infty x^{-p-1}\,dx
=\frac{1}{p} N^{-p} \lesssim N^{-p}.
\]
Using the frame properties of $(m_\lambda)_{\lambda\in \Lambda}$, we conclude for the $N$-term approximation $f_N$ obtained by keeping the $N$ largest coefficients
\[
\|f-f_N\|_2^2 = \Big\|\sum_{n=N+1}^\infty c^*_nm^*_n\Big\|_2^2  \lesssim
\sum_{n=N+1}^\infty |c^*_n|^2 \lesssim N^{-p}.
\]

%-----------------------------------------
\subsubsection{Proof of Lemma \ref{lem:auxest}} \label{subsec:proofauxest}
%------------------------------------------

(i): We start with part (i). For this, let $\sigma>1$, $\tau>0$, $(\omega_j)_j$ and $(L_j)_j$ be the parameters associated with
the parametrization $(\Lambda^c,\Phi^c)$. Further, let us put $s_0=\min\{s_\lambda,s_\mu\}$ to simplify the notation in the
proof. We now need to estimate the sum
\[
S=\sum_{\substack{\lambda\in\Lambda^c\\ s_\lambda \text{ fixed}}} (1+d_\alpha(\lambda,\mu))^{-N},
\]
taken over all curvelet indices $(j,\ell,k)\in\Lambda^c$ at scale $s_\lambda=\sigma^j$, where $j\in\N_0$ is fixed.

We begin with the estimate
\begin{align*}
(1+d_\alpha(\lambda,\mu))^{-N}
&=
\big(1+s_0^{2(1-\alpha)}|\theta_\lambda - \theta_{\mu}|^2 + s_0^{2\alpha}|x_\lambda
         - x_{\mu}|^2 +
        \frac{s_0^{2}|\langle e_\lambda , x_\lambda - x_{\mu}\rangle|^2}{1+s_0^{2(1-\alpha)}|\theta_\lambda-\theta_\mu|^2}\big)^{-N} \\
&=2^N
\big(2+2s_0^{2(1-\alpha)}|\theta_\lambda - \theta_{\mu}|^2 + 2s_0^{2\alpha}|x_\lambda
         - x_{\mu}|^2 +
        2\frac{s_0^{2}|\langle e_\lambda , x_\lambda - x_{\mu}\rangle|^2}{1+s_0^{2(1-\alpha)}|\theta_\lambda-\theta_\mu|^2}\big)^{-N} \\
&\le2^N
\big(2+2s_0^{2(1-\alpha)}|\theta_\lambda - \theta_{\mu}|^2 + s_0^{2\alpha}|x_\lambda
         - x_{\mu}|^2 +
        \frac{s_0^{2}|\langle e_\lambda , x_\lambda - x_{\mu}\rangle|^2}{1+s_0^{2(1-\alpha)}|\theta_\lambda-\theta_\mu|^2}\big)^{-N} \\
&\le2^N
\big(1+s_0^{2(1-\alpha)}|\theta_\lambda - \theta_{\mu}|^2 + s_0^{2\alpha}|x_\lambda
         - x_{\mu}|^2 + 2s_0|\langle e_\lambda , x_\lambda - x_{\mu}\rangle|\big)^{-N} \\
&\le2^N
\big(1+s_0^{2(1-\alpha)}|\theta_\lambda - \theta_{\mu}|^2 + s_0^{2\alpha}|x_\lambda
         - x_{\mu}|^2 + s_0|\langle e_\lambda , x_\lambda - x_{\mu}\rangle|\big)^{-N},  \\
\end{align*}
where we used the inequality between the arithmetic and the geometric mean
\[
\big(1+s_0^{2(1-\alpha)}|\theta_\lambda - \theta_{\mu}|^2\big) +
\frac{s_0^{2}}{1+s_0^{2(1-\alpha)}|\theta_\lambda-\theta_\mu|^2}|\langle e_\lambda , x_\lambda - x_{\mu}\rangle|^2 \ge 2\cdot s_0 |\langle e_\lambda , x_\lambda - x_{\mu}\rangle|.
\]
Denoting the components of a vector $z\in\R^2$ by $[z]_1$ and $[z]_2$, respectively, we further obtain
\begin{align}\label{eq:estterm1}
|\langle e_\lambda , x_\lambda - x_{\mu}\rangle|=|\langle R_{-\theta_\lambda}e_1 , R_{-\theta_\lambda}A^{-1}_{\alpha,s_\lambda}k - x_{\mu}\rangle|
=|\langle e_1 , A^{-1}_{\alpha,s_\lambda}k - R_{\theta_\lambda}x_{\mu}\rangle|=| s_\lambda^{-1}k_1 - [R_{\theta_\lambda}x_{\mu}]_1|,
\end{align}
where $e_1$ is the first unit vector of $\R^2$, and
\[
| x_\lambda - x_{\mu}|=| R_{-\theta_\lambda}A^{-1}_{\alpha,s_\lambda}k - x_{\mu}|
=| A^{-1}_{\alpha,s_\lambda}k - R_{\theta_\lambda}x_{\mu}|\ge | s_\lambda^{-\alpha}k_2 - [R_{\theta_\lambda}x_{\mu}]_2| .
\]
By assumption the angles $\omega_j$ satisfy $\omega_j\asymp \sigma^{-j(1-\alpha)}=s_\lambda^{-(1-\alpha)}$. It follows
\begin{align}\label{eq:estterm3}
|\theta_\lambda-\theta_\mu|=|\ell\omega_j - \theta_\mu|=|\omega_j| |\ell - \theta_\mu/\omega_j|\asymp  s_\lambda^{-(1-\alpha)} |\ell - \theta_\mu/\omega_j|.
\end{align}
Altogether we deduce from $\omega_j\asymp s_\lambda^{-(1-\alpha)}$ and \eqref{eq:estterm1}-\eqref{eq:estterm3}
{\allowdisplaybreaks
\begin{align}\label{eq:standsum}
S &\lesssim \sum_{k\in\Z^2}\sum_{\ell=-L_j}^{L_j} \big(1+s_0^{2(1-\alpha)}|\theta_\lambda - \theta_{\mu}|^2 + s_0^{2\alpha}|x_\lambda
         - x_{\mu}|^2 + s_0|\langle e_\lambda , x_\lambda - x_{\mu}\rangle|\big)^{-N} \notag\\
&\lesssim \sum_{k\in\Z^2}\sum_{\ell=-L_j}^{L_j}
\big(1+(s_\lambda/s_0)^{-2(1-\alpha)}| \ell - \theta_\mu/\omega_j |^2 + s_0^{2\alpha}| s_\lambda^{-\alpha}k_2-[R_{\theta_\lambda}x_\mu]_2 |^2 +
        s_0 |s_\lambda^{-1}k_1-[R_{\theta_\lambda}x_\mu]_1 | \big)^{-N} \notag\\
&\lesssim \sum_{k\in\Z^2}\sum_{\ell\in\Z}
\big(1+| \ell\cdot (s_\lambda/s_0)^{-(1-\alpha)}-a_1 |^2 + |(s_\lambda/s_0)^{-\alpha}k_2-a_2(\ell) |^2 +  | (s_\lambda/s_0)^{-1}k_1-a_3(\ell) | \big)^{-N}
\end{align}
}
where $\theta_\lambda=\ell\omega_j$, $s_\lambda=\sigma^j$,  and the quantities
\begin{align*}
a_1:=s_0^{(1-\alpha)}\theta_\mu, && a_2(\ell):=s_0^{\alpha}[R_{\theta_\lambda}x_\mu]_2, && a_3(\ell):=s_0[R_{\theta_\lambda}x_\mu]_1
\end{align*}
depend on $j$ and $\mu$, and $a_2$ and $a_3$ also on $\ell$ as indicated by the notation.

To proceed, we distinguish the cases $s_\lambda\ge s_\mu$ and $s_\lambda<s_\mu$.
If $s_\lambda<s_\mu$ then $s_0=s_\lambda$ and the sum becomes
\begin{align*}
\sum_{k\in\Z^2}\sum_{\ell\in\Z}
\big(1+| \ell -a_1 |^2 + |k_2-a_2(\ell) |^2 +  | k_1-a_3(\ell) | \big)^{-N}.
\end{align*}
Since $N>2$, this expression is bounded by the constant
\[
C:=8 \sum_{k\in\N^2_0}\sum_{\ell\in\N_0}
\big(1+| \ell |^2 + |k_2 |^2 +  | k_1 | \big)^{-N}<\infty.
\]
In the other case, if $s_\lambda\ge s_\mu$, we have $s_0=s_\mu$ and
the sum can be interpreted as a Riemann sum, which is bounded up to a multiplicative constant by
the corresponding integral
\beqn
S&\lesssim& \sum_{k\in\Z^2}\sum_{\ell\in\Z}
\big(1+| \ell\cdot (s_\lambda/s_\mu)^{-(1-\alpha)}-a_1 |^2 + |(s_\lambda/s_\mu)^{-\alpha}k_2 -a_2(\ell)|^2 +  | (s_\lambda/s_\mu)^{-1}k_1-a_3(\ell) | \big)^{-N} \\
&=& (s_\lambda/s_\mu)^{2} \cdot \sum_{\ell\in\Z} (s_\lambda/s_\mu)^{-(1-\alpha)} \sum_{k_1\in\Z} (s_\lambda/s_\mu)^{-1}  \sum_{k_2\in\Z} (s_\lambda/s_\mu)^{-\alpha}
\big(1+| \ell\cdot (s_\lambda/s_\mu)^{-(1-\alpha)}-a_1 |^2 \\
&&+ |(s_\lambda/s_\mu)^{-\alpha}k_2 -a_2(\ell)|^2 +  | (s_\lambda/s_\mu)^{-1}k_1-a_3(\ell) | \big)^{-N} \\
&\lesssim& \max\Big\{\frac{s_\lambda}{s_\mu},1\Big\}^{2} \cdot \int_{\R}\,dy \int_{\R^2}\,dx
\big( 1+ | y |^2 + | x_2 |^2 + | x_1 | \big)^{-N}.
\eeqn
Precisely for $N>2$ the integral is finite. Further, the implicit constant ist independent of $s_\lambda$ and $\mu=(x_\mu, \theta_\mu, s_\mu)$.
This finishes the proof of part (i).

\vspace*{0.3cm}

(ii): We now turn to part (ii). Some arguments will be similar to part (i), which we will point out in the sequel. However,
often the utilization of shearing instead of rotation will require a different technical treatment, in particular,
due to the splitting into two parameter sets depending on the parameter $\varepsilon$.

Similar to the curvelet parametrization, the shearlet parametrization $(\Lambda^s,\Phi^s)$ is specified by a set of
parameters $\sigma>1$, $\tau>0$, $(\eta_j)_j$ and $(L_j)_j$. Further, we set $s_0=\min\{s_\lambda,s_\mu\}$ and let $j\in\N_0$ be
the fixed number  with $s_\lambda=\sigma^j$. The sum
\[
\sum_{\substack{\lambda\in\Lambda^s \\ s_\lambda\text{ fixed}}} (1+d_\alpha(\lambda,\mu))^{-N}
\]
can be split into two parts for $\varepsilon=0$ and $\varepsilon=1$. For symmetry reasons, both partial sums can be treated
in the same fashion and it therefore suffices to give the estimate for the part where $\varepsilon=0$.

We know from the proof of part (i) that
\beqn
(1+d_\alpha(\lambda,\mu))^{-N}
\lesssim
 \big(1+s_0^{2(1-\alpha)}|\theta_\lambda - \theta_{\mu}|^2 + s_0^{2\alpha}|x_\lambda
         - x_{\mu}|^2 + s_0|\langle e_\lambda , x_\lambda - x_{\mu}\rangle|\big)^{-N}.
\eeqn
Since $|\ell|\lesssim \sigma^{j(1-\alpha)}$ and $\eta_j\asymp \sigma^{-j(1-\alpha)}$ we have $|\ell\eta_j|\lesssim1$. Hence, there is a bound $B>0$ such that
\begin{align}\label{eq:shearbound}
|\ell\eta_j|\le B \text{ for all } j\in\N_0, |\ell|\le L_j.
\end{align}
In the proof of Proposition~\ref{prop:shearletmol} we have shown that the `transfer matrix' $T=R_{\theta_\lambda}(S_{\ell,j})^{-1}$
with $\theta_\lambda=\arctan(-\ell\eta_j)$ has the form
\begin{align}\label{eq:transfer}
T=\begin{pmatrix} \cos\theta_\lambda & 0 \\ \sin\theta_\lambda & \cos(\theta_\lambda)^{-1}  \end{pmatrix}.
\end{align}
Since $|\ell\eta_j|\le B$, there exists $0<\delta<\frac{\pi}{2}$ such that $|\theta_\lambda|=|\arctan(-\ell\eta_j)|\le \delta$.
It follows that the diagonal entries are bounded by positive constants from above and below. Furthermore,
the off-diagonal entry is bounded from above in absolute value.
This leads to
\begin{align}\label{eq:shearindexest1}
|\langle e_\lambda, x_\lambda-x_\mu \rangle| &= |\langle R_{-\theta_\lambda}e_1, (S_{\ell,j})^{-1}A_{\alpha,\sigma^{-j}} k - x_\mu\rangle|
= |\langle e_1, TA_{\alpha,\sigma^{-j}} k -  R_{\theta_\lambda}x_\mu \rangle| \notag\\
&=|\sigma^{-j}k_1 \cos\theta_\lambda - [R_{\theta_\lambda}x_\mu]_1|  \asymp  |\sigma^{-j}k_1- \cos(\theta_\lambda)^{-1}[R_{\theta_\lambda}x_\mu]_1|.
\end{align}
It holds
\begin{align*}
|x_\lambda-x_\mu|=|S_{\ell,j}^{-1}A_{\alpha,\sigma^{-j}}k-x_\mu|=|\tilde{T}(R_{\theta_\lambda}^{-1}A_{\alpha,\sigma^{-j}}k-\tilde{T}^{-1}x_\mu)|
\end{align*}
where
\[
\tilde{T}=S_{\ell,j}^{-1}R_{\theta_\lambda}=\begin{pmatrix} \cos(\theta_\lambda)^{-1} & 0 \\ \sin\theta_\lambda & \cos(\theta_\lambda)  \end{pmatrix}
\]
is a `transfer matrix' similar to \eqref{eq:transfer}. We can conclude
\[
|x_\lambda-x_\mu| \asymp |A_{\alpha,\sigma^{-j}}k-R_{\theta_\lambda}\tilde{T}^{-1}x_\mu| \ge |s_\lambda^{-\alpha}k_2-[R_{\theta_\lambda}\tilde{T}^{-1}x_\mu]_2|.
\]

Now we distinguish between points $\mu\in\mathbb{P}$ with $|\theta_\mu|\le 2\arctan(B)$ and $|\theta_\mu|>2\arctan(B)$, where $B$
is the bound from \eqref{eq:shearbound}. We next require a simple result, which is as follows.

\begin{lemma}\label{lem:usefulobs}
For all $x,\,y\in\R$ absolutely bounded by some fixed bound $B\ge0$, i.e.\ $|x|,|y|\le B$, we have
\[
|\arctan x - \arctan y| \asymp |x - y|.
\]
\end{lemma}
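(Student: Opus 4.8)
The plan is a direct application of the mean value theorem to the smooth function $\arctan$. If $x=y$ the claim is trivial, so assume $x\ne y$. Since $\arctan$ is differentiable on all of $\R$ with $(\arctan)'(t)=(1+t^2)^{-1}$, there exists a point $\xi$ strictly between $x$ and $y$ such that
\[
\arctan x-\arctan y=\frac{1}{1+\xi^2}\,(x-y).
\]
Because $\xi$ lies between $x$ and $y$ and $|x|,|y|\le B$, we also have $|\xi|\le B$, and hence
\[
\frac{1}{1+B^2}\le\frac{1}{1+\xi^2}\le 1.
\]
Multiplying by $|x-y|$ yields
\[
\frac{1}{1+B^2}\,|x-y|\le|\arctan x-\arctan y|\le|x-y|,
\]
which is exactly the two-sided estimate $|\arctan x-\arctan y|\asymp|x-y|$, with implicit constants $1$ and $1+B^2$ depending only on the fixed bound $B$.

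I do not anticipate any real obstacle here; the only point worth noting is that the lower bound $\gtrsim$ genuinely uses the hypothesis $|x|,|y|\le B$ (without it $\arctan$ is merely a contraction and the reverse inequality fails as $|x|,|y|\to\infty$). In the application this $B$ is the uniform bound from \eqref{eq:shearbound}, so the constants produced here are uniform over all indices $(\varepsilon,j,\ell,k)\in\Lambda^s$, which is all that is needed in the subsequent estimates.
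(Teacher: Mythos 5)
Your proof is correct and follows exactly the paper's argument: the mean value theorem applied to $\arctan$, combined with the bound $|\xi|\le B$ on the intermediate point to trap the derivative $(1+\xi^2)^{-1}$ in $[\tfrac{1}{1+B^2},1]$. The added remark on why the hypothesis is needed for the lower bound is a useful observation but not required.
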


\begin{proof}
For $x\neq y$ we have for some $\xi$ between $x$ and $y$ by the mean value theorem
\[
\frac{|\arctan x - \arctan y|}{|x - y|}=\arctan^\prime(\xi)=\frac{1}{1+\xi^2}.
\]
This yields
\[
\frac{1}{1+B^2} |x - y| \le |\arctan x - \arctan y| \le |x - y|.
\]
The case $x=y$ is trivial.
\end{proof}

As a consequence of this lemma, for $|\theta_\mu|\le 2\arctan(B)$, we obtain
\begin{align}\label{eq:shearindexest3}
\left|\arctan\left(-\ell\eta_j\right)-\theta_\mu\right|\asymp \left|-\ell\eta_j -\tan\theta_\mu\right| .
\end{align}
Since $\theta_\lambda=\arctan(-\ell\eta_j)$ and $|\ell\eta_j|\le B$, for $|\theta_\mu|>2\arctan(B)$, we have $|\theta_\mu|>2|\theta_\lambda|$. Thus, we obtain
\begin{align}\label{eq:shearindexest4}
\left|\arctan\left(-\ell\eta_j\right)-\theta_\mu\right| = \left|\theta_\lambda-\theta_\mu\right|\ge \left|\theta_\lambda\right| \asymp \left|\ell\eta_j\right|  .
\end{align}

In view of the estimates \eqref{eq:shearindexest1}-\eqref{eq:shearindexest3}, if $|\theta_\mu|\le2\arctan(B)$, we obtain
\beqn
\sum_{\substack{\lambda\in\Lambda^s \\ s_\lambda=\sigma^j, \varepsilon=0}}
(1+d_\alpha(\lambda,\mu))^{-N} &
\lesssim &
\sum_{k\in\Z^2}\sum_{\ell=- L_j}^{L_j}
\big(1+s_0^{2(1-\alpha)}|\arctan(- \ell\eta_j)-\theta_\mu |^2 \\
&& + s_0^{2\alpha}|(S_{\ell,j})^{-1} A_{\alpha,\sigma^{-j}}k - x_\mu|^2 +
s_0| \langle e_\lambda, (S_{\ell,j}^{-1})A_{\alpha,\sigma^{-j}}k- x_\mu\rangle |\big)^{-N} \\
&\lesssim&
\sum_{k\in\Z^2}\sum_{\ell=- L_j}^{L_j}
\big(1+s_0^{2(1-\alpha)}|- \ell\eta_j-\tan\theta_\mu |^2 + s_0^{2\alpha}|\sigma^{-j\alpha}k_2 - [R_{\theta_\lambda}\tilde{T}^{-1}x_\mu]_2|^2 \\
&&+ s_0| \sigma^{-j} k_1- [R_{\theta_\lambda}x_\mu]_1/\cos(\theta_\lambda) |\big)^{-N} \\
&\lesssim& \sum_{k\in\Z^2}\sum_{\ell\in\Z}
\big(1+| \ell\cdot (s_\lambda/s_0)^{-(1-\alpha)}-a_1 |^2 + |(s_\lambda/s_0)^{-\alpha}k_2-a_2(\ell) |^2 \\
&&+  | (s_\lambda/s_0)^{-1}k_1-a_3(\ell) | \big)^{-N}
\eeqn
with the quantities
\begin{align*}
a_1=-s_0^{(1-\alpha)}\tan\theta_\mu, &&
a_2(\ell)=s_0^\alpha[R_{\theta_\lambda}\tilde{T}^{-1}x_\mu]_2, &&
a_3(\ell)=s_0[R_{\theta_\lambda}x_\mu]_1/\cos(\theta_\lambda),
\end{align*}
depending on $j$, $\ell$ and $\mu$. This expression is similar to \eqref{eq:standsum}. Therefore from here we can proceed as in the
proof of part (i).

In case $|\theta_\mu|>2\arctan(B)$ we argue analogously, but we use \eqref{eq:shearindexest4} instead of \eqref{eq:shearindexest3}.
We obtain
\beqn
\sum_{\substack{\lambda\in\Lambda^s\\ s_\lambda=\sigma^j, \varepsilon=0}}
(1+d_\alpha(\lambda,\mu))^{-N}
&\lesssim&
\sum_{k\in\Z^2}\sum_{\ell=- L_j}^{L_j}
\Big(1+s_0^{2(1-\alpha)}|\ell\eta_j|^2 + s_0^{2\alpha}|(S_{\ell,j})^{-1} A_{\alpha,\sigma^{-j}}k - x_\mu|^2 \\
&& + s_0| \langle e_\lambda, (S_{\ell,j}^{-1})A_{\alpha,\sigma^{-j}}k- x_\mu\rangle |\Big)^{-N} \\
&\lesssim& \sum_{k\in\Z^2}\sum_{\ell\in\Z}
\big(1+| \ell\cdot (s_\lambda/s_0)^{-(1-\alpha)}|^2 + |(s_\lambda/s_0)^{-\alpha}k_2-a_2(\ell) |^2 \\
&&+  | (s_\lambda/s_0)^{-1}k_1-a_3(\ell) | \big)^{-N}
\eeqn
From here the proof again proceeds along the same lines as the proof of part (i).

%----------------------------------------
\section*{Acknowledgements}
%----------------------------------------

The first author was supported in part by Swiss National Fund (SNF) Grant 146356. The second author acknowledges support from the
Berlin Mathematical School and the DFG Collaborative Research Center TRR 109 ``Discretization in Geometry and Dynamics''.
The third author acknowledges support by the Einstein Foundation Berlin, by the Einstein Center for Mathematics Berlin
(ECMath), by Deutsche Forschungsgemeinschaft (DFG) Grant KU 1446/14, by the DFG Collaborative Research Center TRR 109
``Discretization in Geometry and Dynamics'', and by the DFG Research Center {\sc Matheon} ``Mathematics for Key Technologies''
in Berlin.

\bibliographystyle{abbrv}
\bibliography{molecules}

\begin{thebibliography}{10}

\bibitem{CDOS12}
J.~Cai, B.~Dong, S.~Osher, and Z.~Shen.
\newblock Image restoration: total variation, wavelet frames, and beyond.
\newblock {\em J. Amer. Math. Soc.}, 25:1033--1089, 2012.

\bibitem{Can98}
E.~Cand\`{e}s.
\newblock Ridgelets: Theory and applications.
\newblock Ph{D} thesis, Stanford University, 1998.

\bibitem{CD99}
E.~Cand\`{e}s and D.~L. Donoho.
\newblock Ridgelets: a key to higher-dimensional intermittency?
\newblock {\em Philos. Trans. R. Soc. Lond. Ser. A Math. Phys. Eng. Sci.},
  357:2495--2509, 1999.

\bibitem{CD02}
E.~J. Cand\`{e}s and L.~Demanet.
\newblock The curvelet representation of wave propagators is optimally sparse.
\newblock {\em Comm. Pure Appl. Math.}, 58:1472--1528, 2002.

\bibitem{CD04}
E.~J. Cand\`es and D.~L. Donoho.
\newblock New tight frames of curvelets and optimal representations of objects
  with {$C^2$} singularities.
\newblock {\em Comm. Pure Appl. Math.}, 56:219--266, 2004.

\bibitem{CRT06b}
E.~J. Cand{\`e}s, J.~K. Romberg, and T.~Tao.
\newblock Stable signal recovery from incomplete and inaccurate measurements.
\newblock {\em Comm. Pure Appl. Math.}, 59:1207--1223, 2006.

\bibitem{Christensen2003a}
O.~Christensen.
\newblock {\em An Introduction to Frames and Riesz Bases}.
\newblock Birkh\"auser, 2003.

\bibitem{CDD01}
A.~Cohen, W.~Dahmen, and R.~DeVore.
\newblock Adaptive wavelet methods for elliptic operator equations: convergence
  rates.
\newblock {\em Math. Comp.}, 70:27--75, 2001.

\bibitem{DHKSW14}
W.~Dahmen, C.~Huang, G.~Kutyniok, W.-Q. Lim, C.~Schwab, and G.~Welper.
\newblock Efficient resolution of anisotropic structures.
\newblock Preprint.

\bibitem{Dau92}
I.~Daubechies.
\newblock {\em Ten Lectures on Wavelets}.
\newblock SIAM, Philadelphia, 1992.

\bibitem{Devore1998}
R.~DeVore.
\newblock Nonlinear approximation.
\newblock {\em Acta Numerica}, 7:51--150, 1998.

\bibitem{Donoho2010c}
D.~Donoho and G.~Kutyniok.
\newblock Microlocal analysis of the geometric separation problem.
\newblock {\em Comm. Pure Appl. Math.}, 66:1--47, 2013.

\bibitem{Don2000}
D.~L. Donoho.
\newblock {Orthonormal ridgelets and linear singularities.}
\newblock {\em SIAM J. Math. Anal.}, 31:1062--1099, 2000.

\bibitem{Don01}
D.~L. Donoho.
\newblock Sparse components of images and optimal atomic decomposition.
\newblock {\em Constr. Approx.}, 17:353--382, 2001.

\bibitem{Don06c}
D.~L. Donoho.
\newblock Compressed sensing.
\newblock {\em IEEE Trans. Inform. Theory}, 52:1289--1306, 2006.

\bibitem{EL2012}
G.~Easley and D.~Labate.
\newblock {\em Shearlets: Multiscale Analysis for Multivariate Data}, chapter
  Image Processing using Shearlets, pages 283--320.
\newblock Birkh\"auser Boston, 2012.

\bibitem{FJW91}
M.~Frazier, B.~Jawerth, and G.~Weiss.
\newblock Littlewood-paley theory and the study of function spaces.
\newblock In {\em Conference Board of the Mathematical Sciences, Regional
  Conference Series in Mathematics}, volume~79. Amer. Math. Soc., Providence,
  RI, 1991.

\bibitem{GK14}
M.~Genzel and G.~Kutyniok.
\newblock Asymptotic analysis of inpainting via universal shearlet systems.
\newblock Preprint.

\bibitem{Grafakos2008}
L.~Grafakos.
\newblock {\em Classical Fourier Analysis}.
\newblock Springer, 2 edition, 2008.

\bibitem{GrohsRidLT}
P.~Grohs.
\newblock Ridgelet-type frame decompositions for sobolev spaces related to
  linear transport.
\newblock {\em J. Fourier Anal. Appl.}, 18:309--325, 2012.

\bibitem{GKKScurve2014}
P.~Grohs, S.~Keiper, G.~Kutyniok, and M.~Sch\"{a}fer.
\newblock Cartoon approximation with $\alpha$-curvelets.
\newblock Preprint.

\bibitem{Grohs2011}
P.~Grohs and G.~Kutyniok.
\newblock Parabolic molecules.
\newblock {\em Found. Comput. Math.}, 14:299--337, 2014.

\bibitem{GKL05}
K.~Guo, G.~Kutyniok, and D.~Labate.
\newblock Sparse multidimensional representations using anisotropic dilation
  and shear operators.
\newblock In {\em Wavelets and Splines (Athens, GA, 2005)}, pages 189--201.
  Nashboro Press, Nashville, TN, 2006.

\bibitem{GL07}
K.~Guo and D.~Labate.
\newblock Optimally sparse multidimensional representation using shearlets.
\newblock {\em SIAM J. Math. Anal.}, 39:298--318, 2007.

\bibitem{GL08}
K.~Guo and D.~Labate.
\newblock Representation of {F}ourier integral operators using shearlets.
\newblock {\em J. Fourier Anal. Appl.}, 14:327--371, 2008.

\bibitem{Kei13}
S.~Keiper.
\newblock A flexible shearlet transform - sparse approximation and dictionary
  learning.
\newblock Bachelor's thesis, TU Berlin, 2012.

\bibitem{Kittipoom2010}
P.~Kittipoom, G.~Kutyniok, and W.-Q. Lim.
\newblock Construction of compactly supported shearlet frames.
\newblock {\em Constr. Approx.}, 35(1):21--72, 2012.

\bibitem{KL2012}
G.~Kutyniok and D.~Labate.
\newblock {\em Shearlets: Multiscale Analysis for Multivariate Data}, chapter
  Introduction to Shearlets, pages 1--38.
\newblock Birkh\"auser Boston, 2012.

\bibitem{Kutyniok2012}
G.~Kutyniok, J.~Lemvig, and W.~{\hspace*{-0.7ex}-Q \hspace*{-2pt} Lim}.
\newblock Compactly supported shearlet frames and optimally sparse
  approximations of functions in {$L^2(\mathbb{R}^3)$} with piecewise
  {$C^\alpha$} singularities.
\newblock {\em SIAM J. Math. Anal.}, 44:2962--3017, 2012.

\bibitem{Kutyniok2010}
G.~Kutyniok and W.-Q. Lim.
\newblock Compactly supported shearlets are optimally sparse.
\newblock {\em J. Approx. Theory}, 163(11):1564--1589, 2011.

\bibitem{Smith1998a}
H.~Smith.
\newblock A parametrix construction for wave equations with
  {$C^{1,1}$}-coefficients.
\newblock {\em Ann. Inst. Fourier}, 48:797--835, 1998.

\bibitem{Woj97}
P.~Wojtaszczyk.
\newblock {\em A Mathematical Introduction to Wavelets}.
\newblock Cambridge University Press, Cambridge, 1997.

\end{thebibliography}

\end{document}